\documentclass[11pt,a4paper]{amsart}

\usepackage[margin=1.1in]{geometry}
\usepackage{microtype}
\usepackage{hyphenat}
\usepackage{parskip}

\usepackage{amsmath,amssymb,amsthm,mathtools}
\usepackage{commath}
\usepackage{relsize}

\usepackage{color}
\usepackage{graphicx}
\usepackage{array}
\usepackage{subfig}
\usepackage{wrapfig}
\usepackage{float}
\usepackage[normalem]{ulem}
\usepackage{outlines}
\usepackage{enumitem} 
\usepackage{comment}
\usepackage{kantlipsum}

\usepackage{hyperref}
\usepackage{cleveref} 
\hypersetup{colorlinks=true, linkcolor=blue, citecolor=blue}

\crefname{enumi}{condition}{conditions}
\Crefname{enumi}{Condition}{Conditions}

\newcommand{\CC}{\mathbb{C}}
\newcommand{\ZZ}{\mathbb{Z}}
\newcommand{\Q}{\mathbb{Q}}
\newcommand{\Qbar}{\overline{\mathbb{Q}}}

\mathtoolsset{showonlyrefs=true}

\theoremstyle{plain}
\newtheorem{theorem}{Theorem}[section]
\newtheorem{corollary}[theorem]{Corollary}
\newtheorem{proposition}[theorem]{Proposition}
\newtheorem{lemma}[theorem]{Lemma}
\newtheorem{defn}[theorem]{Definition}

\theoremstyle{definition}
\newtheorem{remark}[theorem]{Remark}
\newtheorem{example}[theorem]{Example}

\DeclareMathOperator{\ddiv}{div}      

\DeclareMathOperator{\disc}{disc}

\DeclareMathOperator{\ord}{ord}

\title{\bfseries Weighted Fruit Diophantine Equations and Hyperelliptic Curves}

\author{Jewel Mahajan}
\address{Department of Mathematical Sciences, Indian Institute of Science Education and Research (IISER) Berhampur, Ganjam, Odisha 760003, India}
\email{jewelmahajan421@gmail.com}

\author{Apeksha Sanghi}
\address{Department of Mathematics, Indian Institute of Science (IISc), CV Raman Road, Bengaluru 560012, Karnataka, India}
\email{apekshasanghi93@gmail.com} 

\subjclass[2020]{Primary 11D41, 11G30; Secondary 11D72, 11A15, 11G05}
\keywords{Diophantine equation, quadratic residue, Legendre symbol, Jacobi symbol, elliptic curve, hyperelliptic curve, Mordell--Weil group, torsion points, Nagell--Lutz theorem}

\begin{document}
\begin{abstract}
We study the \textit{weighted fruit Diophantine equation} $  ax^{d} - c\bigl(m^{2}y^{2}+n^{2}z^{2}\bigr) + xyz - b = 0$, generalising the results established in \cite{MajumdarSury,VaishyaSharma,PrakashChakraborty}.
Subject to specific hypotheses on the parameters, our main result shows that for any prime $l \equiv 3 \pmod 4$ and $b = a (2 c m n)^{d} - l\, c^{s}t^{2q}$, the above equation has no integer solutions except for certain residue classes of $x$ modulo $4l$. An analogous result also holds when $l$ is replaced by an odd power of $l$ in the definition of $b$. We prove some insolvability results for $l=-1$. By applying the main result to the small values of $l$, such as $l \in \{3, 7, 11, 19\}$, we explicitly determine the exceptional residue classes outside of which the equation has no solutions. In particular, for $l = 3$, this yields complete insolvability, and weakening these hypotheses still yields non-existence results, though with specific coprimality restrictions on any possible solutions. We also consider a more general variant of the above Diophantine equation and provide some insolvability results. Subsequently, we establish bounds for the positive solutions of the aforementioned equation. Finally, by associating a family of hyperelliptic curves with the equation under consideration and applying Grant's analogue of the Nagell--Lutz theorem \cite{Grant}, we translate these insolvability results into results about their rational torsion points.
\end{abstract}

\maketitle

\section{Introduction}\label{sec:intro}
Diophantine equations have a remarkably rich literature in number theory. A significant part of this research focuses on determining whether a given Diophantine equation admits integer solutions. This area of study also shares deep connections with algebraic geometry, particularly concerning integral points on elliptic (resp., hyperelliptic) curves via the Nagell--Lutz theorem (resp., Grant's theorem).

Consider the Diophantine equation
\begin{equation}\label{eq:fruit}
  y^2 - xyz + z^2 = x^3 - 5.
\end{equation}
Majumdar and Sury \cite{MajumdarSury} proved that~\eqref{eq:fruit} has no integer solution. They also showed that the associated elliptic curve $y^2 - mxy = x^3 - (m^2 + 5)$ has no integral point for any integer $m$. Equations of a similar form, $ax^3 + by + c - xyz = 0$, were previously investigated by Luca and Togb\'e \cite{LucaTogbe2008, Togbe2009}, Subburam and Thangadurai \cite{SubburamThangadurai2014, SubburamThangadurai2015}, and Subburam and Togb\'e \cite{SubburamTogbe2017}.

In an effort to generalise existing results in the literature, we consider the Diophantine equation
\begin{equation}\label{eq:main_eq}
  ax^{d} - c\bigl(m^{2}y^{2} + n^{2}z^{2}\bigr) + xyz - b = 0.
\end{equation}
Extending the work of Majumdar and Sury, who studied~\eqref{eq:main_eq} for the specific case where $a=c=m=n=1$, $b=5$, and $d=3$, Vaishya and Sharma \cite{VaishyaSharma} proved the insolvability of~\eqref{eq:main_eq} when $a \equiv 1 \pmod{12}$, $c = m = n = 1$, $b = 8a - 3$, and $d=3$. This yielded an infinite family of elliptic curves with torsion-free Mordell--Weil group over $\Q$ (see \cite[Corollary~1.4]{VaishyaSharma}).

Along the same line, further extending the work of Vaishya and Sharma, Prakash and Chakraborty \cite{PrakashChakraborty} established the insolvability of~\eqref{eq:main_eq} for the case when $a \equiv 1 \pmod{12}$, $c = m = n = 1$, $b = a2^d - 3$, and $d$ is an odd multiple of $3$. By applying Grant's hyperelliptic analogue of the Nagell--Lutz theorem \cite{Grant}, they obtained infinitely many hyperelliptic curves with torsion-free Mordell--Weil groups. This is consistent with Faltings' finiteness theorem \cite{Faltings1983} and Bhargava's result \cite{Bhargava2013} that most hyperelliptic curves over $\Q$ have no rational points. We remark that \cite[Theorem~3.3]{PrakashChakraborty} and \cite[Theorem~1.2]{VaishyaSharma} are valid only for $a = 1$, genus $g=1$, and when $m$ specified therein is even (see Remark~\ref{rem:pc-correction}).

This article aims to investigate the insolvability of~\eqref{eq:main_eq} under some hypotheses on its parameters. The remainder of this article is organised as follows. In Section~\ref{sec:prelim}, we define the sets $\mathcal{N}_1(l)$ and $\mathcal{N}_2(l)$ (see Definition~\ref{N_1 definition}) and establish several propositions and lemmas for later use. In Section~\ref{sec:main}, assuming the parameters $a,c,d,m,n,q,s,t$ satisfy the hypotheses of Theorem~\ref{thm:main_general_l}, we establish the insolvability of the generalised Diophantine equation~\eqref{eq:main_eq} for a prime $l \equiv 3 \pmod{4}$ and parameter $b = a (2 c m n)^{d} - l c^{s} t^{2q}$. Our proof generalises methods from previous work by analysing the residue classes of $x$ modulo $4l$, with the careful observation made in Theorem~\ref{thm:main_general_l}\ref{cond:4}. A significant portion of the proof of Theorem~\ref{thm:main_general_l} remains valid even if $l$ is replaced by an odd power of $l$ in the definition of $b$. This yields a similar insolvability result, with the only exception that integer solutions $(x,y,z)$ with $x \equiv 2cmn \pmod l$ cannot be ruled out (see Corollary~\ref{thm:main_general_l_k}). Subsequently, we apply Theorem~\ref{thm:main_general_l} to some small values of $l$ in Section~\ref{sec:smallprimes}, such as $l \in \{ 3, 7, 11, 19\}$, and explicitly determine the exceptional residue classes outside of which~\eqref{eq:main_eq} has no solutions. In particular, for $l = 3$, this yields complete insolvability (see Theorem~\ref{thm:main3_gen}), while for the small primes $l = 7, 11$, and $ 19$, we list the set of residues modulo $28, 44$, and $76$, respectively, and exhibit solutions in the exceptional classes, showing that Theorems~\ref{thm:main7_gen},~\ref{thm:main11_gen} and~\ref{thm:main19_gen} are sharp. Similar ideas also yield some insolvability results for $l=-1$.

In subsequent sections, we investigate~\eqref{eq:main_eq} under various hypotheses, establishing a connection between an associated family of hyperelliptic curves and the insolvability of~\eqref{eq:main_eq}. In Section~\ref{sec:coprime}, we explore a variant of Theorem~\ref{thm:main3_gen} under weakened hypotheses. We observe that non-existence results continue to hold, though with specific coprimality restrictions on any potential solutions (see Theorem~\ref{thm:coprime_3}). Section~\ref{sec:asymmetric} considers a more general variant of~\eqref{eq:main_eq}, where the coefficients of $y^2$ and $z^2$ are independent, specifically allowing them to be squarefree, and we also provide some insolvability results depending on the residue of $a$ modulo $4$. In Section~\ref{sec:positive solution}, we focus on positive solutions to~\eqref{eq:main_eq}. In particular, under the hypothesis on the parameters of~\eqref{eq:main_eq}, we show that there are only finitely many positive integer solutions $(x,y,z)$ satisfying $x \le 2cmn$. Finally, in Section~\ref{sec:hyperelliptic}, we associate a family of hyperelliptic curves with~\eqref{eq:main_eq} and analyse their rational torsion points. As a consequence, Theorem~\ref{thm:main3_gen} yields a family of elliptic curves with torsion-free Mordell--Weil groups (see Theorem~\ref{thm:general-3c-hyperelliptic}). We believe that this framework of connecting rational torsion points to the solution of certain Diophantine equations could possibly be extended to superelliptic curves, assuming an analogue to the Grant or Nagell-Lutz theorem exists in that setting. We leave this direction for future work.

\section{Preliminaries}\label{sec:prelim}
Throughout this article, $\bigl(\frac{\cdot}{\cdot}\bigr)$ denotes the Jacobi symbol. Note that this reduces to the Legendre symbol when the denominator is an odd prime $p$. This section establishes the preliminary definitions, lemmas, and propositions necessary for subsequent sections.
\begin{defn}\label{N_1 definition}
Let $l$ be a positive integer. We define the sets $\mathcal{N}_1(l)$ and $\mathcal{N}_2(l)$ by
\[
  \mathcal{N}_1(l) = \bigg\{ x \in \ZZ \;\bigg|\; 1 \le x < 4l, \, \gcd(x, 4l) = 1, \, \left(\frac{l}{x}\right) = 1 \bigg\},
\]
\[
  \mathcal{N}_2(l) = \bigg\{ x \in \ZZ \;\bigg|\; 1 \le x < 4l, \, \gcd(x, 4l) = 1, \, \left(\frac{l}{x}\right) = -1 \bigg\}.
\]
\end{defn}

\begin{remark}
Following the convention that an empty product evaluates to $1$, we always have $1 \in \mathcal{N}_1(l)$.
\end{remark}

\begin{proposition}\label{prop:leg_general_3mod4}
Let $l$ be a prime such that $l \equiv 3 \pmod 4$. For any odd prime $p \neq l$, the Legendre symbol $\left(\frac{l}{p}\right)$ is determined by the congruence class of $p$ modulo $4l$. Specifically, $\left(\frac{l}{p}\right) = 1$ if and only if $p$ satisfies one of the following conditions:
\begin{enumerate}[label=(\alph*)]
\item $p \equiv 1 \pmod 4$ and $p \equiv a \pmod l$, where $a$ is a quadratic residue modulo $l$;
    \item $p \equiv 3 \pmod 4$ and $p \equiv b \pmod l$, where $b$ is a quadratic nonresidue modulo $l$.
\end{enumerate}
By the Chinese Remainder Theorem (CRT), there are exactly $l-1$ residue classes modulo $4l$ for which $\left(\frac{l}{p}\right) = 1$, and $\left(\frac{l}{p}\right) = -1$ for the remaining $l-1$ residue classes.
\end{proposition}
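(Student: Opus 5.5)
The plan is to obtain the criterion directly from quadratic reciprocity and then count residue classes with the Chinese Remainder Theorem. Since $p$ and $l$ are distinct odd primes, reciprocity gives
\[
\left(\frac{l}{p}\right)\left(\frac{p}{l}\right) = (-1)^{\frac{p-1}{2}\cdot\frac{l-1}{2}}.
\]
Because $l \equiv 3 \pmod 4$, the exponent $\frac{l-1}{2}$ is odd. The sign on the right is therefore $(-1)^{\frac{p-1}{2}}$, which equals $1$ if $p\equiv 1\pmod 4$ and $-1$ if $p \equiv 3 \pmod 4$. This yields
\[
\left(\frac{l}{p}\right) = (-1)^{\frac{p-1}{2}}\left(\frac{p}{l}\right).
\]

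The first factor depends only on $p \bmod 4$. The second factor $\left(\frac{p}{l}\right)$ depends only on $p \bmod l$, since $p\not\equiv 0 \pmod l$. Together, $\left(\frac{l}{p}\right)$ depends only on $p \bmod 4l$, by the CRT isomorphism $(\ZZ/4l\ZZ)^\times \cong (\ZZ/4\ZZ)^\times\times(\ZZ/l\ZZ)^\times$. Setting the product equal to $1$ leaves two possibilities. Either both factors equal $1$, which means $p\equiv 1 \pmod 4$ and $p$ is a quadratic residue mod $l$; this is case (a). Or both factors equal $-1$, which means $p \equiv 3 \pmod 4$ and $p$ is a nonresidue mod $l$; this is case (b).

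For the count, recall that $(\ZZ/l\ZZ)^\times$ contains exactly $\frac{l-1}{2}$ quadratic residues and $\frac{l-1}{2}$ nonresidues. By CRT, case (a) corresponds to $1\cdot\frac{l-1}{2}$ classes mod $4l$, and case (b) corresponds to another $\frac{l-1}{2}$ classes. This gives $l-1$ classes with symbol $+1$. The units mod $4l$ number $\varphi(4l)=2(l-1)$, so the remaining $l-1$ unit classes are those with symbol $-1$. Odd primes $p\neq l$ are exactly the primes lying in unit classes mod $4l$.

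There is no serious obstacle here. The only points needing care are the parity of $\frac{l-1}{2}$ and the observation that the counting statement concerns residue classes coprime to $4l$, rather than all classes mod $4l$.
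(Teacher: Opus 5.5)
Your proof is correct and follows the same route as the paper. Quadratic reciprocity with $\frac{l-1}{2}$ odd gives $\left(\frac{l}{p}\right) = (-1)^{\frac{p-1}{2}}\left(\frac{p}{l}\right)$, and the CRT then splits the condition into cases (a) and (b); your explicit count via the $\frac{l-1}{2}$ residues and nonresidues and $\varphi(4l)=2(l-1)$ supplies the $l-1$ tally that the paper leaves implicit.
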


\begin{proof}
By the law of quadratic reciprocity (see \cite[Theorem~9.9]{Burton}), for an odd prime $p \neq l$ with $l \equiv 3 \pmod{4}$, we have
\[
\left(\frac{l}{p}\right) = \left(\frac{p}{l}\right) (-1)^{\frac{l-1}{2} \cdot \frac{p-1}{2}}= \left(\frac{p}{l}\right) (-1)^{\frac{p-1}{2}}.
\]
We analyse the following cases for $p$ modulo $4$ to determine the sign of the second factor.
\begin{enumerate}[label=(\alph*)]
\item If $p \equiv 1 \pmod 4$, then $(-1)^{\frac{p-1}{2}} = 1$. Thus, $\left(\frac{l}{p}\right) = \left(\frac{p}{l}\right)$. For this to equal $1$, $p$ must be a quadratic residue modulo $l$.
    \item If $p \equiv 3 \pmod 4$, then $(-1)^{\frac{p-1}{2}} = -1$. Thus, $\left(\frac{l}{p}\right) = -\left(\frac{p}{l}\right)$. For this to equal $1$, $p$ must be a quadratic nonresidue modulo $l$.
\end{enumerate}
Since $\gcd(4, l) = 1$, the CRT guarantees that each possible combination mentioned above produces a unique residue class modulo $4l$. These computations yield the exact residue classes for which $\left(\frac{l}{p}\right) = 1$, and the remaining residue classes yield $\left(\frac{l}{p}\right) = -1$.
\end{proof}
Using Proposition~\ref{prop:leg_general_3mod4}, we now determine the residue classes $p$ modulo $4l$ for which $\left(\frac{l}{p}\right)=\pm1$ for each $l \in \{3,7,11,19\}$.
\begin{proposition}\label{prop:leg3}
For an odd prime $p \neq 3$, we have
\[
\left(\frac{3}{p}\right) = 
\begin{cases}
    1 & \text{if } p \equiv 1, 11 \pmod{12}, \\
    -1 & \text{if } p \equiv 5, 7 \pmod{12}.
\end{cases}
\]
\end{proposition}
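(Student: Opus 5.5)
We will obtain this as the special case $l = 3$ of Proposition~\ref{prop:leg_general_3mod4}. That result is available since $3$ is a prime with $3 \equiv 3 \pmod 4$. It tells us that for an odd prime $p \neq 3$, the value of $\left(\frac{3}{p}\right)$ depends only on $p$ modulo $12$. It also tells us that $\left(\frac{3}{p}\right) = 1$ exactly when either $p \equiv 1 \pmod 4$ and $p$ is a quadratic residue modulo $3$, or $p \equiv 3 \pmod 4$ and $p$ is a quadratic nonresidue modulo $3$.

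The only input needed is the residue data modulo $3$. The nonzero squares modulo $3$ are $\{1\}$, so $1$ is the unique quadratic residue and $2$ is the unique quadratic nonresidue. We then apply the Chinese Remainder Theorem to each combination.
\begin{itemize}
\item From $p \equiv 1 \pmod 4$ and $p \equiv 1 \pmod 3$ we get $p \equiv 1 \pmod{12}$.
\item From $p \equiv 3 \pmod 4$ and $p \equiv 2 \pmod 3$ we get $p \equiv 11 \pmod{12}$.
\end{itemize}
These are the two classes with symbol $+1$, in agreement with the count $l - 1 = 2$ from Proposition~\ref{prop:leg_general_3mod4}.

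For the remaining classes, note that an odd prime $p \neq 3$ is coprime to $12$. It therefore lies in one of the four units classes $1, 5, 7, 11$ modulo $12$. By Proposition~\ref{prop:leg_general_3mod4}, the other two classes $5$ and $7$ give $\left(\frac{3}{p}\right) = -1$. We can confirm this directly: $5 \equiv 1 \pmod 4$ and $5 \equiv 2 \pmod 3$, while $7 \equiv 3 \pmod 4$ and $7 \equiv 1 \pmod 3$, so neither satisfies condition (a) or (b).

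There is no real obstacle here. The only point needing care is the CRT bookkeeping that matches each pair of residues (modulo $4$ and modulo $3$) to the correct class modulo $12$.
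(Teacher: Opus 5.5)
Your proposal is correct and follows the paper's proof: both specialise Proposition~\ref{prop:leg_general_3mod4} to $l=3$ (quadratic residue $1$, nonresidue $2$ modulo $3$) and use the Chinese Remainder Theorem to obtain $p \equiv 1, 11 \pmod{12}$ for the symbol $+1$, with the classes $5$ and $7$ giving $-1$. Your explicit check that $5$ and $7$ satisfy neither condition (a) nor condition (b) is a small addition the paper leaves implicit.
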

\begin{proof}
Applying Proposition~\ref{prop:leg_general_3mod4} with $l=3$, we have $\left(\frac{3}{p}\right) = 1$ if either $p \equiv 1 \pmod 4$ with $p \equiv 1 \pmod 3$, or $p \equiv 3 \pmod 4$ with $p \equiv 2 \pmod 3$. Solving these simultaneous congruences via the CRT yields $p \equiv 1, 11 \pmod{12}$, with the remaining residue classes yielding $-1$.
\end{proof}
\begin{proposition}\label{prop:leg7}
For an odd prime $p \neq 7$, we have
\[
\left(\frac{7}{p}\right) = 
\begin{cases}
    1 & \text{if } p \equiv 1, 3, 9, 19, 25, 27 \pmod{28}, \\
    -1 & \text{if } p \equiv 5, 11, 13, 15, 17, 23 \pmod{28}.
\end{cases}
\]\end{proposition}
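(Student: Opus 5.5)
We apply Proposition~\ref{prop:leg_general_3mod4} with $l=7$, exactly as in the proof of Proposition~\ref{prop:leg3}. First we list the quadratic residues modulo $7$. Squaring $1,2,3$ gives $1,4,2$, so the residues are $\{1,2,4\}$ and the nonresidues are $\{3,5,6\}$.

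By Proposition~\ref{prop:leg_general_3mod4}, $\left(\frac{7}{p}\right)=1$ exactly when one of two conditions holds:
\begin{enumerate}[label=(\alph*)]
\item $p\equiv 1 \pmod 4$ and $p \bmod 7 \in\{1,2,4\}$;
\item $p\equiv 3\pmod 4$ and $p \bmod 7\in\{3,5,6\}$.
\end{enumerate}

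Next we solve these simultaneous congruences with the CRT.

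For case (a), the odd numbers below $28$ that are $\equiv 1 \pmod 4$ are $1,5,9,13,17,21,25$. Their residues modulo $7$ are $1,5,2,6,3,0,4$. Keeping those with residue in $\{1,2,4\}$ gives $p\equiv 1,9,25\pmod{28}$.

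For case (b), the numbers below $28$ that are $\equiv 3 \pmod 4$ are $3,7,11,15,19,23,27$. Their residues modulo $7$ are $3,0,4,1,5,2,6$. Keeping those with residue in $\{3,5,6\}$ gives $p\equiv 3,19,27\pmod{28}$.

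Together these give the six classes $1,3,9,19,25,27$, as claimed. The remaining reduced residues modulo $28$ are $5,11,13,15,17,23$, which form the other $l-1=6$ classes where the symbol equals $-1$. The classes $7$ and $21$ are excluded because $p\neq 7$.

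There is no real obstacle here. The only care needed is in the arithmetic of the residue table.
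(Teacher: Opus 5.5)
Your proposal is correct and follows essentially the same route as the paper: you apply Proposition~\ref{prop:leg_general_3mod4} with $l=7$, use the residues $\{1,2,4\}$ and nonresidues $\{3,5,6\}$ modulo $7$, and solve the resulting congruences by the CRT. Your residue tables check out and give exactly the classes $1,3,9,19,25,27$ for $+1$ and the complementary reduced classes $5,11,13,15,17,23$ for $-1$.
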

\begin{proof}
Applying Proposition~\ref{prop:leg_general_3mod4} with $l=7$, we have $\left(\frac{7}{p}\right) = 1$ if either $p \equiv 1 \pmod 4$ with $p \equiv 1, 2, 4 \pmod 7$, or $p \equiv 3 \pmod 4$ with $p \equiv 3, 5, 6 \pmod 7$. Solving these simultaneous congruences via the CRT yields $p \equiv 1, 3, 9, 19, 25, 27 \pmod{28}$, with the remaining residue classes yielding $-1$.
\end{proof}
\begin{proposition}\label{prop:leg11}
For an odd prime $p \neq 11$, we have
\[
\left(\frac{11}{p}\right) = 
\begin{cases}
    1 & \text{if } p \equiv 1, 5, 7, 9, 19, 25, 35, 37, 39, 43 \pmod{44}, \\
    -1 & \text{if } p \equiv 3, 13, 15, 17, 21, 23, 27, 29, 31, 41 \pmod{44}.
\end{cases}
\]\end{proposition}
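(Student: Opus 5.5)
We apply Proposition~\ref{prop:leg_general_3mod4} with $l=11$, following the pattern of Propositions~\ref{prop:leg3} and~\ref{prop:leg7}. Since $11 \equiv 3 \pmod 4$, the proposition gives $\left(\frac{11}{p}\right) = 1$ exactly when either $p \equiv 1 \pmod 4$ and $p$ is a quadratic residue modulo $11$, or $p \equiv 3 \pmod 4$ and $p$ is a quadratic nonresidue modulo $11$.

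The first step is to list the quadratic residues modulo $11$. Squaring $1,\dots,5$ gives $1,4,9,5,3$, so the nonzero residues are $\{1,3,4,5,9\}$. The nonresidues are therefore $\{2,6,7,8,10\}$.

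The second step is to solve the resulting pairs of congruences modulo $44$ by the CRT. In practice, for each residue class $r$ modulo $11$ we run through $r, r+11, r+22, r+33$ and keep the one lying in the required class modulo $4$.
\begin{itemize}
\item For $p \equiv 1 \pmod 4$ and $p \equiv 1,3,4,5,9 \pmod{11}$ we get $1, 25, 37, 5, 9$.
\item For $p \equiv 3 \pmod 4$ and $p \equiv 2,6,7,8,10 \pmod{11}$ we get $35, 39, 7, 19, 43$.
\end{itemize}
Together these give the ten classes $1,5,7,9,19,25,35,37,39,43$ modulo $44$, as claimed.

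Every odd $p \neq 11$ lies in one of the $\varphi(44)=20$ reduced classes. The Legendre symbol takes the value $-1$ on the remaining ten reduced classes, namely $3,13,15,17,21,23,27,29,31,41$.

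The only real obstacle is arithmetic accuracy in the CRT step. Each of the ten values can be checked directly by reducing it modulo $4$ and modulo $11$.
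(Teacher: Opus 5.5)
Your proposal is correct and is essentially the paper's proof: apply Proposition~\ref{prop:leg_general_3mod4} with $l=11$, split by $p$ modulo $4$ against the residues $\{1,3,4,5,9\}$ and nonresidues $\{2,6,7,8,10\}$ modulo $11$, and combine via the CRT. Your explicit CRT lifts all check out, and you spell out the complementary $-1$ classes, which the paper leaves implicit.
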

\begin{proof}
Applying Proposition~\ref{prop:leg_general_3mod4} with $l=11$, we have $\left(\frac{11}{p}\right) = 1$ if either $p \equiv 1 \pmod 4$ with $p \equiv 1, 3, 4, 5, 9 \pmod{11}$, or $p \equiv 3 \pmod 4$ with $p \equiv 2, 6, 7, 8, 10 \pmod{11}$. Solving these simultaneous congruences via the CRT yields $p \equiv 1, 5, 7, 9, 19, 25, 35, 37, 39, 43 \pmod{44}$, with the remaining residue classes yielding $-1$.
\end{proof}
\begin{proposition}\label{prop:leg19}
For an odd prime $p \neq 19$, we have
\[
\left(\frac{19}{p}\right) = 
\begin{cases} 
    1 & \text{if } p \equiv 1, 3, 5, 9, 15, 17, 25, 27, 31, \\
      & \quad 45, 49, 51, 59, 61, 67, 71, 73, 75 \pmod{76}, \\ 
    -1 & \text{if } p \equiv 7, 11, 13, 21, 23, 29, 33, 35, 37, \\
       & \quad 39, 41, 43, 47, 53, 55, 63, 65, 69 \pmod{76}. 
\end{cases}
\]
\end{proposition}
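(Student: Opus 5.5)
The plan is to follow the proofs of Propositions~\ref{prop:leg3}, \ref{prop:leg7} and~\ref{prop:leg11}: apply Proposition~\ref{prop:leg_general_3mod4} with $l=19$ and then make the Chinese Remainder Theorem step explicit. Since $19\equiv 3 \pmod 4$, that proposition gives $\left(\frac{19}{p}\right)=1$ exactly when either $p\equiv 1 \pmod 4$ and $p$ is a quadratic residue modulo $19$, or $p\equiv 3\pmod 4$ and $p$ is a quadratic nonresidue modulo $19$.

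\textbf{Step 1: residues modulo $19$.} I will square $1,\dots,9$ modulo $19$. This gives $1,4,9,16,6,17,11,7,5$, so the quadratic residues are $\{1,4,5,6,7,9,11,16,17\}$. The nonresidues are then the complement in $(\ZZ/19\ZZ)^\times$, namely $\{2,3,8,10,12,13,14,15,18\}$.

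\textbf{Step 2: lifting to modulo $76$.} Each combination of a class modulo $4$ with a class modulo $19$ lifts to a unique class modulo $76$. Instead of solving $18$ separate CRT systems, I will run through the odd integers $1\le r<76$ with $r\neq 19,57$. For each $r$ I reduce modulo $4$ and modulo $19$ and check which of the two conditions from Proposition~\ref{prop:leg_general_3mod4} holds. Some spot checks against the claimed list:
\begin{itemize}
\item $25\equiv 1 \pmod 4$ and $25\equiv 6\pmod{19}$, a residue, so $25$ is in the $+1$ list.
\item $27\equiv 3\pmod 4$ and $27\equiv 8 \pmod{19}$, a nonresidue, so $27$ is in the $+1$ list.
\item $59\equiv 3\pmod 4$ and $59\equiv 2\pmod{19}$, a nonresidue, so $59$ is in the $+1$ list.
\item $75\equiv 3 \pmod 4$ and $75\equiv 18\pmod{19}$, a nonresidue, so $75$ is in the $+1$ list.
\end{itemize}
Collecting all such $r$ should reproduce exactly the $18=l-1$ classes listed for the value $1$.

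\textbf{Step 3: the remaining classes.} The other $18$ odd classes coprime to $19$ give $-1$, by the final sentence of Proposition~\ref{prop:leg_general_3mod4}. The count $18+18=\varphi(76)=36$ serves as a consistency check that no class has been missed or duplicated.

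The only real obstacle is bookkeeping: getting the $36$ reductions right. The count check in Step 3 guards against this, and I would also confirm a few entries by direct quadratic reciprocity.
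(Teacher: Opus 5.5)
Your proposal is correct and is essentially the paper's proof: apply Proposition~\ref{prop:leg_general_3mod4} with $l=19$, use the quadratic residues $\{1,4,5,6,7,9,11,16,17\}$ modulo $19$, and lift to classes modulo $76$ by the CRT. Running through the odd $r<76$ with $r\neq 19,57$ is just a way of organising the CRT step, and carrying it out does reproduce exactly the two lists of $18$ classes.
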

\begin{proof}
Applying Proposition~\ref{prop:leg_general_3mod4} with $l=19$, we have $\left(\frac{19}{p}\right) = 1$ if either $p \equiv 1 \pmod 4$ with $p \equiv 1, 4, 5, 6, 7, 9, \allowbreak 11, 16, 17 \pmod{19}$, or $p \equiv 3 \pmod 4$ with $p \equiv 2, 3, 8, 10, 12, \allowbreak 13, 14, 15, 18 \pmod{19}$. Solving these simultaneous congruences via the CRT yields $p \equiv 1, 3, 5, 9, 15, 17, \allowbreak 25, 27, 31, 45, 49, 51, \allowbreak 59, 61, 67, 71, 73, 75 \pmod{76}$, with the remaining residue classes yielding $-1$.
\end{proof}
\begin{lemma}[{\cite[Theorem~1.4.9]{Cohen}}]\label{lemma:jacobi_periodicity}
Let $a$ be a nonzero integer. For any odd positive integers $m$ and $n$ such that $m \equiv n \pmod{4|a|}$, we have
\[
    \left(\frac{a}{m}\right) = \left(\frac{a}{n}\right).
\]
\end{lemma}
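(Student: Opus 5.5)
The plan is to factor $a$ and then reduce everything to reciprocity for the Jacobi symbol. Write $a = \epsilon\, 2^{e} a'$ with $\epsilon = \pm 1$, $e \ge 0$ and $a'$ odd and positive. The Jacobi symbol is multiplicative in its numerator, so for odd positive $m$ we have
\[
\left(\frac{a}{m}\right) = \left(\frac{\epsilon}{m}\right)\left(\frac{2}{m}\right)^{e}\left(\frac{a'}{m}\right).
\]
It therefore suffices to show that each of the three factors is unchanged when $m$ is replaced by $n$, under the hypothesis $m \equiv n \pmod{4|a|}$. The factor $\left(\frac{a'}{m}\right)$ involves only odd numbers, so the whole proof comes down to the supplementary laws and the reciprocity law for the Jacobi symbol.

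For the sign and the power of $2$ we use the supplementary laws. First, $\left(\frac{-1}{m}\right) = (-1)^{(m-1)/2}$. This depends only on $m \bmod 4$, and $4 \mid 4|a|$, so the $\epsilon$ factor agrees for $m$ and $n$. Next, $\left(\frac{2}{m}\right) = (-1)^{(m^2-1)/8}$, which depends only on $m \bmod 8$. This factor only matters when $e \ge 1$. In that case $2 \mid a$, so $8 \mid 4|a|$, and hence $m \equiv n \pmod 8$, which makes the factors agree.

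For the odd part, we apply Jacobi reciprocity to the odd positive integers $a'$ and $m$:
\[
\left(\frac{a'}{m}\right) = (-1)^{\frac{a'-1}{2}\cdot\frac{m-1}{2}}\left(\frac{m}{a'}\right).
\]
The sign factor depends only on $m \bmod 4$. The symbol $\left(\frac{m}{a'}\right)$ depends only on $m \bmod a'$, since the Jacobi symbol is periodic in its numerator modulo its denominator. Both $4$ and $a'$ divide $4|a|$, so $m \equiv n \pmod{4|a|}$ gives equality of this factor as well. Multiplying the three equalities proves the lemma.

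There is no serious obstacle here. The only step that needs care is the power of $2$: the modulus $4|a|$ by itself does not guarantee agreement modulo $8$. This is exactly why we only need congruence modulo $8$ when $a$ is even, and in that case $8 \mid 4|a|$ automatically. The degenerate case $a' = 1$ is also harmless, since then the odd factor is identically $1$.
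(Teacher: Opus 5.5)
Your proof is correct. The paper gives no proof of this lemma of its own; it simply cites Cohen (Theorem~1.4.9). What you have written is the standard self-contained derivation, so there is no competing argument in the paper to compare against. Your steps are:
\begin{itemize}
\item split $a=\epsilon\,2^{e}a'$;
\item handle $\epsilon$ and $2^{e}$ with the two supplementary laws, using $8\mid 4|a|$ exactly when $e\ge 1$;
\item handle $a'$ by reciprocity together with the periodicity of $\bigl(\frac{\cdot}{a'}\bigr)$ modulo $a'$.
\end{itemize}

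One tacit point deserves a clause. Jacobi reciprocity is usually stated for coprime arguments. If $\gcd(a',m)>1$, then $\bigl(\frac{a'}{m}\bigr)=\bigl(\frac{m}{a'}\bigr)=0$, so the identity you use still holds. Moreover, $a'\mid 4|a|$ forces $\gcd(a',n)=\gcd(a',m)$, so the conclusion in that case is simply $0=0$.
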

\begin{lemma}\label{lem:sum_div}
Let $l$ and $d$ be positive integers such that $l \mid d$. If $x \equiv A \pmod l$ for some integer $A$, then $\sum_{i=0}^{d-1} x^{d-1-i}A^i \equiv 0 \pmod l$.
\end{lemma}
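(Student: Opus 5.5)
The plan is to reduce everything modulo $l$ and use the hypothesis $x \equiv A \pmod l$ to replace $x$ by $A$ in each summand. Concretely, for each index $i$ with $0 \le i \le d-1$, I first note that $x \equiv A \pmod l$ implies $x^{d-1-i} \equiv A^{d-1-i} \pmod l$, since congruences modulo $l$ are preserved under multiplication. Multiplying by $A^i$ then gives
\[
x^{d-1-i}A^i \equiv A^{d-1-i}A^i = A^{d-1} \pmod l .
\]
So every one of the $d$ summands is congruent to the same quantity $A^{d-1}$ modulo $l$.

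Summing over $i$, I obtain
\[
\sum_{i=0}^{d-1} x^{d-1-i}A^i \equiv d\,A^{d-1} \pmod l .
\]
Since $l \mid d$ by hypothesis, the right-hand side $d\,A^{d-1}$ is a multiple of $l$. Hence the sum is congruent to $0$ modulo $l$, which is the claim.

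There is no genuine obstacle here; the argument is purely elementary. The only points to check are degenerate cases. If $A = 0$ and $d = 1$, the convention $0^0 = 1$ is consistent with the displayed reduction. The conclusion still holds trivially, because $l \mid d = 1$ forces $l = 1$. I would also remark why the lemma will be useful later: the sum is the cofactor in the factorisation $x^d - A^d = (x - A)\sum_{i=0}^{d-1} x^{d-1-i}A^i$. Consequently $l^2 \mid x^d - A^d$ whenever $x \equiv A \pmod l$ and $l \mid d$, which is presumably how the lemma enters the analysis of the case $x \equiv 2cmn \pmod l$.
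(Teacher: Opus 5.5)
Your proposal is correct and is essentially the paper's own proof: reduce each summand modulo $l$ to $A^{d-1}$, so the sum is congruent to $d\,A^{d-1}$, which vanishes modulo $l$ because $l \mid d$. Your closing remark also matches how the lemma is used: in Subcase~2.1 it makes the term $ac\delta S$ vanish modulo $l$ after dividing by $l$.
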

\begin{proof}Since $l\mid d$, we have
\begin{equation}
    \sum_{i=0}^{d-1} x^{d-1-i}A^i \equiv\sum_{i=0}^{d-1} A^{d-1-i}A^i \equiv \sum_{i=0}^{d-1} A^{d-1} \equiv d A^{d-1}\equiv0 \pmod l.
\end{equation}
\end{proof}
\begin{lemma}\label{lem:t_odd}
    Let $l$ be a prime, and let $t \in \ZZ$ be such that every prime factor of $t$ is congruent to $r$ for some $r \in \mathcal{N}_1(l)$ (see Definition~\ref{N_1 definition}). Then $t$ is odd and $l \nmid t$. 
\end{lemma}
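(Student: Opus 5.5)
The claim is essentially immediate from Definition~\ref{N_1 definition}: it suffices to show that no prime factor $p$ of $t$ can equal $2$ or $l$. I read the hypothesis as saying each prime factor $p$ of $t$ satisfies $p \equiv r \pmod{4l}$ for some $r \in \mathcal{N}_1(l)$, where $\mathcal{N}_1(l)$ is understood as a set of residues modulo $4l$ since its elements lie in $[1,4l)$. The key point is that every $r \in \mathcal{N}_1(l)$ satisfies $\gcd(r,4l)=1$.

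Fix a prime $p \mid t$, and choose $r \in \mathcal{N}_1(l)$ with $p \equiv r \pmod{4l}$. Then $p = r + 4lk$ for some integer $k$. It follows that $\gcd(p,4l) = \gcd(r,4l) = 1$, because the gcd with $4l$ depends only on the residue class modulo $4l$. In particular $2 \nmid p$, since $2 \mid 4l$, and $l \nmid p$. Since $p$ is prime, this gives $p \neq 2$ and $p \neq l$.

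Since no prime factor of $t$ is $2$ or $l$, we conclude $2 \nmid t$ and $l \nmid t$, so $t$ is odd and $l \nmid t$.

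The only subtlety is the degenerate case $t = \pm 1$. Here $t$ has no prime factors, so the hypothesis holds vacuously. The conclusion still holds, since $\pm 1$ is odd and not divisible by the prime $l$. The case $t = 0$ must be excluded: every prime divides $0$, and $2 \equiv r \pmod{4l}$ is impossible for any $r \in \mathcal{N}_1(l)$ by the argument above, so the hypothesis already forces $t \neq 0$. I would state this explicitly in the proof. Note that the Jacobi symbol condition $\left(\frac{l}{r}\right) = 1$ in the definition of $\mathcal{N}_1(l)$ is not needed anywhere; only the coprimality condition $\gcd(r,4l)=1$ is used.
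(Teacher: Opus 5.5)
Your proof is correct and follows the paper's argument: each prime factor $p \equiv r \pmod{4l}$ inherits $\gcd(p,4l)=1$ from $r$, so $p \neq 2$ and $p \neq l$, and therefore $t$ is odd and $l \nmid t$. Your explicit treatment of $t=0$ is a worthwhile addition. It is ruled out because the prime $2 \mid 0$ would have to be congruent to some $r \in \mathcal{N}_1(l)$, which is impossible. The paper leaves this implicit yet later cites the lemma to conclude $t \neq 0$.
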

\begin{proof}
Let $p$ be a prime factor of $t$. By assumption, $p \equiv r \pmod{4l}$. Since $r \in \mathcal{N}_1(l)$ is odd, $p$ must also be odd. Because every prime factor of $t$ is odd, it follows that $t$ is odd.

Also, since $r \in \mathcal{N}_1(l)$, we have $\gcd(r, 4l) = 1$, which means that $l\nmid r$. Consequently, $l \nmid p$ (i.e., $p \neq l$). Because $l$ is a prime and it is different from all the prime factors of $t$, it follows that $l \nmid t$.
\end{proof}
\begin{defn}\label{def:discriminant}
A polynomial $h(x) \in \ZZ[x]$ is \textit{squarefree} if it is not
divisible by $g(x)^2$ for any non-constant polynomial $g(x)$. Equivalently, $h(x)$ is squarefree if it has no repeated roots in $\Qbar$, the algebraic closure of $\Q$.

Let $h(x) \in \ZZ[x]$ be monic of degree $n \ge 1$, and let
$\alpha_1, \dots, \alpha_n \in \CC$ denote its roots, repeated as many times according to multiplicity, so that $  h(x) = \prod_{i=1}^{n} (x - \alpha_i) \in \CC[x]$. The \textit{discriminant} of $h$ is defined as
\[
  \disc(h) \;=\; \prod_{1 \le i < j \le n} (\alpha_i - \alpha_j)^2 .
\]
\end{defn}
\begin{remark}\label{discriminant nonzero}
The product above is equal to $P(e_1,\ldots,e_n)$ for some $P\in \ZZ[X_{1},\dots,X_n]$,  where $e_1, \dots, e_n$  are the elementary symmetric polynomials in $\alpha_1, \dots, \alpha_n$. By Vieta's formulas, each
$e_k = e_k(\alpha_1, \dots, \alpha_n)$ equals a
coefficient of $h$ (up to sign), and is therefore an integer. Consequently, $\disc(h) \in \ZZ$. Furthermore, $h$ is squarefree if and only if $\disc(h) \ne 0$.

\end{remark}

\begin{lemma}[{\cite[Theorem~4]{Greenfield1984}}]\label{lem:trinomial_discriminant}
For an integer $d > 2$, the discriminant of the trinomial $f(X) = X^d + \lambda X^2 + \mu$ is given by
\begin{equation}\label{eq:discriminant_formula}
\disc(f) = (-1)^{d(d-1)/2} \mu \left( d^N \mu^{N-K} - (-1)^N (d-2)^{N-K} 2^K \lambda^N \right)^{\tilde{g}},
\end{equation}
where $\tilde{g} = \gcd(d, 2)$, $N = d/\tilde{g}$, and $K = 2/\tilde{g}$. 

In particular, when $d>2$ is odd, the discriminant simplifies to
\[
\disc(f) = (-1)^{d(d-1)/2} \mu \big( d^d \mu^{d-2} + 4(d-2)^{d-2} \lambda^d \big).
\]
\end{lemma}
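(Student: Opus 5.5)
The plan is to compute $\disc(f)$ through the resultant of $f$ and $f'$, exploiting the very sparse shape of $f'$. For monic $f$ of degree $d$ with roots $\alpha_1,\dots,\alpha_d$ one has the standard identity
\[
\disc(f)=(-1)^{d(d-1)/2}\prod_{i=1}^{d} f'(\alpha_i).
\]
It follows directly from Definition~\ref{def:discriminant} by writing $f'(\alpha_i)=\prod_{j\ne i}(\alpha_i-\alpha_j)$. Both sides of the claimed formula are polynomials in $\lambda,\mu$ with integer coefficients (Remark~\ref{discriminant nonzero}). So it suffices to prove the identity for generic $\lambda,\mu$, say $\lambda\mu\ne0$ and $f$ squarefree, and then extend by polynomial identity. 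This also disposes of the degenerate cases $\lambda=0$ or $\mu=0$.

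The first step is to factor the derivative as $f'(X)=X\,(dX^{d-2}+2\lambda)$. Then
\[
\prod_i f'(\alpha_i)=\Bigl(\prod_i\alpha_i\Bigr)\prod_i\bigl(d\alpha_i^{d-2}+2\lambda\bigr),\qquad \prod_i \alpha_i=(-1)^d\mu .
\]
Next I eliminate the high power using $\alpha_i^d=-\lambda\alpha_i^2-\mu$:
\[
\alpha_i^2\bigl(d\alpha_i^{d-2}+2\lambda\bigr)=d\alpha_i^d+2\lambda\alpha_i^2=-\bigl((d-2)\lambda\alpha_i^2+d\mu\bigr).
\]
Since $\prod_i\alpha_i^2=\mu^2$, this gives
\[
\prod_i f'(\alpha_i)=\frac{(-1)^d\mu\cdot(-1)^d}{\mu^2}\prod_i\bigl((d-2)\lambda\beta_i+d\mu\bigr),\qquad \beta_i=\alpha_i^2 .
\]
Thus everything reduces to a product over the squares $\beta_i$ of the roots.

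For that product I use the polynomial $G(Y)=\prod_i(Y-\beta_i)$. It satisfies $G(X^2)=(-1)^d f(X)f(-X)$, since $\prod_i(X^2-\alpha_i^2)=\prod_i(X-\alpha_i)\prod_i(-1)(-X-\alpha_i)$. I then split by the parity of $d$.

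If $d$ is odd, $f(X)f(-X)=(\lambda X^2+\mu)^2-X^{2d}$, so $G(Y)=Y^d-(\lambda Y+\mu)^2$.

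If $d$ is even, $f(X)f(-X)=f(X)^2$, so $G(Y)=(Y^{d/2}+\lambda Y+\mu)^2$. This is where the exponent $\tilde g=2$ and the halved quantities $N=d/2$, $K=1$ come from.

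Writing $(d-2)\lambda\beta_i+d\mu=-(d-2)\lambda\,(c-\beta_i)$ with $c=-d\mu/((d-2)\lambda)$, the product equals $\bigl(-(d-2)\lambda\bigr)^d G(c)$. Substituting $c$ and clearing denominators gives, in the odd case, a multiple of $d^d\mu^{d}+4(d-2)^{d-2}\lambda^d\mu^2$. Dividing by $\mu^2$ leaves $\mu\bigl(d^d\mu^{d-2}+4(d-2)^{d-2}\lambda^d\bigr)$. In the even case the same substitution gives the square of $d^{N}\mu^{N-1}-(-1)^N(d-2)^{N-1}2\lambda^N$, times $\mu$.

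I expect the main obstacle to be the sign and power bookkeeping. This means tracking the factors $(-1)^d$, $(-(d-2)\lambda)^d$ and the powers of $(d-2)\lambda$ and $d$ in the denominator of $G(c)$, so that they cancel exactly to the stated expression. I will also need to verify that the power of $\mu$ comes out as $\mu^1$ in front rather than some other power. I would first do this carefully in the odd case, which is the one used later. I would then sanity-check the final formula against a small example such as $d=3$, where $\disc(X^3+\lambda X^2+\mu)=-4\lambda^3\mu-27\mu^2$, before treating the even case by the same computation.
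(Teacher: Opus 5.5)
Your proposal is correct. It differs from the paper in that the paper gives no argument here at all: it quotes Greenfield and Drucker's general formula for $X^n+aX^k+b$, in which $\gcd(n,k)$ plays the role of $\tilde g$, and specialises to $k=2$.

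Your route is a self-contained derivation for $k=2$, and the bookkeeping checks out:
\begin{itemize}
\item You write $\disc(f)=(-1)^{d(d-1)/2}\prod_i f'(\alpha_i)$ and factor $f'(X)=X(dX^{d-2}+2\lambda)$.
\item You use $\alpha_i^d=-\lambda\alpha_i^2-\mu$ to make each factor linear in $\beta_i=\alpha_i^2$.
\item You evaluate $G(Y)=\prod_i(Y-\beta_i)$ at $c=-d\mu/((d-2)\lambda)$.
\item For odd $d$ one gets exactly $\bigl(-(d-2)\lambda\bigr)^d G(c)=d^d\mu^d+4(d-2)^{d-2}\lambda^d\mu^2$.
\item For even $d$ one gets exactly $\mu^2\bigl(d^N\mu^{N-1}-(-1)^N(d-2)^{N-1}2\lambda^N\bigr)^2$.
\end{itemize}
Both cases of the statement follow, and your $d=3$ check, as well as $d=4$, where one gets $16\mu(\lambda^2-4\mu)^2$, agree.

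There is one small slip in your description. The prefactor $(-1)^d\mu\cdot(-1)^d/\mu^2$ equals $1/\mu$. So the product $d^d\mu^d+4(d-2)^{d-2}\lambda^d\mu^2$ is divided by $\mu$, not by $\mu^2$. Your stated end result is nonetheless the right one, and the product is exactly that expression, not merely a multiple of it.

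Also, the squarefree restriction in your genericity step is unnecessary. The identity $\disc(f)=(-1)^{d(d-1)/2}\prod_i f'(\alpha_i)$ holds with multiplicities. You only need $\lambda\mu\neq 0$, to divide by the $\beta_i$ and to define $c$, before extending by polynomial identity.

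As for what each route buys: yours is independent of the reference. Its parity split through $G(X^2)=(-1)^d f(X)f(-X)$ also shows where $\tilde g$, $N$ and $K$ come from. The citation is shorter and covers arbitrary middle exponents $k$.
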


\section{Insolvability for primes \texorpdfstring{$l \equiv 3 \pmod 4$}{l = 3 mod 4}}\label{sec:main}

We now prove one of the main results of this article, the key insolvability result for~\eqref{eq:main_eq} under some assumptions on the parameters $a,c,d,m,n,t,q,s$ and $b$.
\begin{theorem}\label{thm:main_general_l}
Let $l$ be a prime such that $l \equiv 3 \pmod 4$. Let $a, c, m, n, t \in \ZZ$ and $d, q, s \in \ZZ_{>0}$ be such that $d, m, n$, and $s$ are odd integers with $l \mid d$. Assume $a \equiv c \equiv 1 \pmod{4l}$ and $mn \equiv 1 \pmod{l}$, and suppose that every prime factor of $cmt$ is congruent to $r$ modulo $4l$ for some $r \in \mathcal{N}_1(l)$. Set $b = a (2 c m n)^{d} - l c^{s} t^{2q}.$ Then~\eqref{eq:main_eq} has no integer solutions $(x,y,z) $ for which $x$ satisfies any of the following conditions:
\begin{enumerate}[label=(\alph*)]
    \item $x$ is even; \label{cond:1}
    \item $x \equiv 3 \pmod 4$; \label{cond:2}
    \item $x \equiv \pm 2cmn \pmod l$;\label{cond:3}
    \item $x - 2cmn \equiv v \pmod{4l}$ for some $v \in \mathcal{N}_2(l) $\label{cond:4}.
\end{enumerate}
\end{theorem}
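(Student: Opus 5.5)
The plan is to reduce \eqref{eq:main_eq} to a factored identity, and then extract a quadratic-residue condition on the prime divisors of $x-2cmn$. Put $A = 2cmn$; since $c,m,n$ are odd, $A \equiv 2 \pmod 4$. The key algebraic identities are
\[
c(m^2y^2+n^2z^2) - xyz = c(my-nz)^2 - (x-A)yz = c(my+nz)^2 - (x+A)yz .
\]
Substituting $b = aA^d - l c^s t^{2q}$ and writing $x^d - A^d = (x-A)S$ with $S=\sum_{i=0}^{d-1}x^{d-1-i}A^i$, the equation becomes
\[
(x-A)\bigl(aS + yz\bigr) = c\,w^2 - l\,c^s t^{2q}, \qquad w = my - nz .
\]
Throughout I use Lemma~\ref{lem:t_odd}: $c$, $m$ and $t$ are odd and prime to $l$, and $a\equiv c\equiv 1 \pmod{4l}$.

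\textbf{Conditions \ref{cond:1} and \ref{cond:2}: reduction mod $4$.} Here $b \equiv -l c^s \equiv 1 \pmod 4$, because $A^d\equiv 0 \pmod 8$ and $l\equiv 3 \pmod 4$.
\begin{itemize}
\item If $x$ is even, then $ax^d \equiv 0$, so we need $c(m^2y^2+n^2z^2) - xyz \equiv 3 \pmod 4$. A parity check shows the left side is $0$, $1$ or $2 \pmod 4$: it can be odd only when exactly one of $y,z$ is odd, and then $4 \mid xyz$.
\item If $x\equiv 3\pmod 4$, then $ax^d\equiv 3$ since $d$ is odd, so we need $m^2y^2+n^2z^2 - xyz \equiv 2 \pmod 4$. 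Checking the three parity patterns of $(y,z)$ gives the values $0$, $1$ and an odd number respectively, so this is impossible.
\end{itemize}

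\textbf{Condition \ref{cond:3}, case $x\equiv -A \pmod l$.} I use the $(my+nz)$ identity and reduce mod $l$. Since $d$ is odd, $x^d \equiv -A^d \pmod l$, which gives
\[
c(my+nz)^2 \equiv -2aA^d \equiv -2^{d+1} \pmod l,
\]
using $A\equiv 2c\equiv 2 \pmod l$ (as $mn\equiv 1 \pmod l$). Because $d+1$ is even and $\bigl(\tfrac{-1}{l}\bigr)=-1$, the right side is a nonzero nonresidue. This is a contradiction.

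\textbf{Condition \ref{cond:3}, case $x\equiv A \pmod l$.} By Lemma~\ref{lem:sum_div}, $l\mid S$. Reducing mod $l$ gives $l\mid w$, so $l^2 \mid c w^2$ while $l \,\|\, l c^s t^{2q}$. Hence $v_l$ of the right side is exactly $1$.
\begin{itemize}
\item If $l\mid y$, then $my\equiv nz$ forces $l\mid z$. Then $l \mid aS+yz$, so $l^2$ divides the left side, a contradiction.
\item If $l\nmid yz$, the valuation count alone does not finish. I would write $x-A = l u$ with $l\nmid u$ and $l \nmid aS+yz$, and then run the residue argument of the next step on $u$ and on $(aS+yz)$, reducing mod $l$ with $y\equiv n^2 z$, to force a nonresidue.
\end{itemize}
I expect this sub-case to be the main obstacle.

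\textbf{Condition \ref{cond:4}: the Jacobi symbol argument.} Let $p$ be an odd prime dividing $x-A$ with $p\ne l$.
\begin{itemize}
\item If $p\mid ct$, then $p$ lies in a class of $\mathcal{N}_1(l)$, so $\bigl(\tfrac{l}{p}\bigr)=1$ by Lemma~\ref{lemma:jacobi_periodicity}.
\item Otherwise $c w^2\equiv l c^s t^{2q} \pmod p$, and $p\nmid w$ (else $p \mid lct$). Multiplying by $c$ gives $(cw)^2 \equiv l\,(c^{(s+1)/2}t^q)^2 \pmod p$, since $s+1$ is even, so again $\bigl(\tfrac{l}{p}\bigr)=1$.
\end{itemize}
Under \ref{cond:4}, $x-A$ is odd and prime to $l$. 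Multiplicativity then gives $\bigl(\tfrac{l}{|x-A|}\bigr)=1$. On the other hand $x-A\equiv v\pmod{4l}$, and:
\begin{itemize}
\item if $x-A>0$, Lemma~\ref{lemma:jacobi_periodicity} gives $\bigl(\tfrac{l}{x-A}\bigr)=\bigl(\tfrac{l}{v}\bigr)=-1$;
\item if $x-A<0$, then $|x-A|\equiv 4l-v \pmod{4l}$, and by reciprocity $\bigl(\tfrac{l}{4l-v}\bigr)=\bigl(\tfrac{l}{v}\bigr)$. Indeed, both factors $\bigl(\tfrac{\cdot}{l}\bigr)$ and $(-1)^{(\cdot-1)/2}$ flip sign, because $\bigl(\tfrac{-1}{l}\bigr)=-1$.
\end{itemize}
Either way we get $-1=1$, a contradiction. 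Finally, $x=A$ itself is covered by case \ref{cond:3}.
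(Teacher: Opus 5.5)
Your arguments for (a), (b), the case $x\equiv -2cmn\pmod l$ of (c), and (d) are sound. The identity $(x-A)(aS+yz)=cw^2-lc^st^{2q}$ is a tidier vehicle than the paper's completed square: it avoids the reduction to $y,z$ even, and it never uses the hypothesis on the prime factors of $m$. The genuine gap is the case $x\equiv 2cmn\pmod l$ of (c). You only sketch it, and the sketch as stated does not close.

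Here is what your sketch actually yields. With $w=lw_1$ and $x-A=lu$, dividing by $l$ gives $u(aS+yz)=clw_1^2-c^st^{2q}$. Reducing mod $l$ using $l\mid S$, $c\equiv 1$ and $y\equiv n^2z$ gives $u(nz)^2\equiv -t^{2q}\pmod l$. Hence $\bigl(\frac{u}{l}\bigr)=-1$; this also shows $l\nmid yz$ directly, so your first bullet is unnecessary. Your prime-divisor argument from (d) applies to $u$ and gives $\bigl(\frac{l}{|u|}\bigr)=1$.

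These two facts do not contradict each other on their own. For odd $u$ prime to $l$, reciprocity together with $\bigl(\frac{-1}{l}\bigr)=-1$ gives $\bigl(\frac{u}{l}\bigr)=(-1)^{(u-1)/2}\bigl(\frac{l}{|u|}\bigr)$, and both facts hold simultaneously whenever $u\equiv 3\pmod 4$.

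The missing ingredient is the residue of $u$ modulo $4$:
\begin{itemize}
\item First invoke (a) and (b) to get $x\equiv 1\pmod 4$.
\item Then $lu=x-A\equiv 3\pmod 4$, so $u\equiv 1\pmod 4$.
\item Only now does $\bigl(\frac{u}{l}\bigr)=-1$ force $\bigl(\frac{l}{|u|}\bigr)=-1$, so there is a prime $p\mid u$ with $\bigl(\frac{l}{p}\bigr)=-1$, contradicting your prime-divisor lemma.
\end{itemize}
This is exactly how the paper finishes Subcase~2.1: its $\delta$ is your $u$, and $\delta\equiv 1\pmod 4$ is extracted from $x\equiv 1\pmod 4$. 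The residue argument on $aS+yz$ that you mention plays no role.

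A small slip: in (b), when exactly one of $y,z$ is odd, $y^2+z^2+yz\equiv 1+yz\pmod 4$ can be $3$, not only $1$. It is still odd, so your conclusion there is unaffected.
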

\begin{proof}
Since $d \ge 3$, $c \equiv 1 \pmod 4$, and $t$ is odd (by Lemma~\ref{lem:t_odd}), it follows that $2^d \equiv 0 \pmod 4$ and $c^s \equiv t^{2q} \equiv 1 \pmod 4$. Therefore,
\begin{equation}\label{eq:g3c-prelim}
b \equiv -lc^{s}t^{2q} \equiv -l \equiv -3\equiv 1 \pmod{4}.
\end{equation}
We divide our analysis into two cases based on the parity of $x$.

\medskip
\noindent\textbf{Case 1}: \textit{$x$ is even.}\quad
Let $x = 2\beta$. Multiplying~\eqref{eq:main_eq} by $cm^{2}$ and completing the square via the substitution $U := cm^{2}y - \beta z$ yields
\begin{equation}\label{eq:g3c-Ceven}
U^{2} - (\beta^{2}-c^{2}m^{2}n^{2})z^{2} = cm^{2}\bigl(a(2\beta)^{d}-b\bigr).
\end{equation}
Using the definition of $b$ and the fact that $4 \mid 2^{d}$, the right-hand side of~\eqref{eq:g3c-Ceven} becomes
\[
cm^{2}\bigl(a(2\beta)^{d}-b\bigr) = acm^{2}\bigl((2\beta)^{d}-(2cmn)^{d}\bigr) + lc^{s+1}m^{2}t^{2q} \equiv lc^{s+1}m^{2}t^{2q} \pmod{4}.
\]
Because $m$ and $n$ are odd, $m^2 \equiv n^2 \equiv c^{2}m^{2}n^{2} \equiv 1 \pmod{4}$. Since $c \equiv t^2\equiv 1 \pmod 4$, the right-hand side of~\eqref{eq:g3c-Ceven} is congruent to $3 \pmod 4$. 
\begin{itemize}
    \item If $\beta$ is even, then $\beta^{2} - c^{2}m^{2}n^{2} \equiv -1 \pmod 4$. Thus~\eqref{eq:g3c-Ceven} reduces to $U^{2} + z^{2} \equiv 3 \pmod 4$, which is impossible, since a sum of two squares must be congruent to $0, 1$, or $2 \pmod 4$.
    \item If $\beta$ is odd, then $\beta^{2} - c^{2}m^{2}n^{2} \equiv 0 \pmod 4$. Thus~\eqref{eq:g3c-Ceven} reduces to $U^{2} \equiv 3 \pmod 4$, which is also impossible, since $3$ is not a quadratic residue modulo $4$.
\end{itemize}
This proves~\ref{cond:1}.

\medskip
\noindent\textbf{Case 2}: \textit{$x$ is odd.}\quad
Reducing~\eqref{eq:main_eq} modulo $2$, noting that $a,c,m,n,x$, and $b$ are all odd, gives
\[
y^{2} + z^{2} - yz \equiv a-b\equiv 0 \pmod 2.
\]
This forces $y \equiv z \equiv 0 \pmod 2$. Write $y = 2Y$ and $z = 2Z$. Multiplying~\eqref{eq:main_eq} by $cm^{2}$ and completing the square via the substitution $w := 2cm^{2}Y - x Z$ yields
\begin{equation}\label{eq:g3c-Codd}
w^{2} - (x^{2}-4c^{2}m^{2}n^{2})Z^{2} = acm^{2}\bigl(x^{d}-(2cmn)^{d}\bigr) + lc^{s+1}m^{2}t^{2q}.
\end{equation}
The left-hand side of~\eqref{eq:g3c-Codd} expands to $4cm^{2}(cm^{2}Y^{2}-x YZ+cn^{2}Z^{2})$, which is divisible by $4$. Using $(2cmn)^{d} \equiv 0 \pmod 4$, $x^{d} \equiv x \pmod 4$ (since $d$ is odd), and $acm^{2} \equiv c^{s+1}m^{2}t^{2q} \equiv 1 \pmod 4$, the reduction of~\eqref{eq:g3c-Codd} modulo $4$ yields 
\begin{align}\label{xequiv1mod4}
0 \equiv x + 3 \pmod 4 \implies x \equiv 1 \pmod 4.
\end{align}
This immediately shows that~\eqref{eq:main_eq} admits no integer solutions when $x \equiv 3 \pmod 4$, thereby proving~\ref{cond:2}. For the remaining conditions, we assume $x \equiv 1 \pmod 4$ and analyse~\eqref{eq:g3c-Codd} in the following three subcases.

\medskip
\noindent\textbf{Subcase 2.1: $x \equiv 2cmn \pmod l$.}\quad
Assume, for the sake of contradiction, that~\eqref{eq:main_eq} admits an integer solution $(x, y, z)$ with $x \equiv 2cmn \pmod l$.

Therefore, we can write $x - 2cmn \equiv el \pmod{4l}$ for some $e \in \{-1,0, 1, 2 \}$. Since $x - 2cmn$ and $l$ are odd, we have $e \in \{-1, 1\}$. Thus, there exists an integer $\delta'$ such that $x = 2cmn + el + 4l\delta' = 2cmn + l(e + 4\delta')$. Setting $\delta = e + 4\delta'$, we have $x = 2cmn + l\delta$, where $\delta \equiv e \pmod 4$.

If $e = -1$, then $\delta \equiv -1 \pmod 4$. This implies $x = 2cmn + l(-1 + 4\delta')$. Since $c \equiv 1 \pmod 4$ and $mn $ is odd, we have $2cmn \equiv 2 \pmod 4$. This yields $x \equiv 2 - l \equiv  -1 \equiv 3 \pmod 4$, contradicting~\eqref{xequiv1mod4}.

Therefore, we must have $e = 1$, which means that $\delta \equiv 1 \pmod 4$ and $x - 2cmn=l\delta$. Evaluating~\eqref{eq:g3c-Codd} modulo $l$, we have $w^2 \equiv 0 \pmod l$. Therefore, $w = lw_1$ for some $w_1 \in \ZZ$. Substituting $w = lw_1$ and $x = 2cmn + l\delta$ back into~\eqref{eq:g3c-Codd} yields
\begin{equation}\label{eq:step2}
l^2 w_1^2 - l\delta(x + 2cmn)Z^2 = c m^{2}\big( a(x^d - (2cmn)^d) + l c^s t^{2q} \big).
\end{equation}
By the binomial expansion, $x^d - (2cmn)^d = (x - 2cmn)S = l\delta S$, where $S = \displaystyle\sum_{i=0}^{d-1} x^{d-1-i}(2cmn)^i$. Dividing~\eqref{eq:step2} by $l$ gives
\begin{equation}\label{eq:step3}
l w_1^2 - \delta(x + 2cmn)Z^2 = m^{2}(ac\delta S + c^{s+1} t^{2q}).
\end{equation}

As $l\mid d$, Lemma~\ref{lem:sum_div} yields $S \equiv 0 \pmod l$. Recall that $a \equiv c \equiv mn \equiv 1 \pmod l$, and $x \equiv 2cmn \pmod l$. Reducing~\eqref{eq:step3} modulo $l$ yields 
\begin{align}
    & -\delta(4cmn)Z^2 \equiv m^2c^{s+1} t^{2q} \pmod l \\
    \implies & -4\delta Z^2 \equiv t^{2q}m^2 \pmod l\\
    \implies & -4\delta n^2Z^2 \equiv t^{2q}m^2n^2\equiv t^{2q} \pmod l.
\end{align}

Observe that $l \nmid 2\delta n Z$; otherwise, we would have $l \mid t^{2q}$, which contradicts Lemma~\ref{lem:t_odd}. Consequently, $\left( \frac{\delta}{l} \right)$, $\left( \frac{n}{l} \right)$ and $\left( \frac{2Z}{l} \right)$ are nonzero. Since $t\neq 0 $ (by Lemma~\ref{lem:t_odd}) and $l \equiv 3 \pmod 4$, we have
\[1 = \left(\frac{t^{2q}}{l}\right) = \left( \frac{-4\delta n^2Z^2}{l} \right) = \left( \frac{-1}{l} \right) \left( \frac{\delta}{l} \right) \left( \frac{n}{l} \right)^2 \left( \frac{2Z}{l} \right)^2 = - \left( \frac{\delta}{l} \right).\]

Therefore, $\left(\frac{\delta}{l}\right) = -1$. We can write $\delta = \text{sgn}(\delta)|\delta|$, where $\text{sgn}(\cdot)$ denotes the signum function. Since $l \equiv 3 \pmod 4$, we have $\left(\frac{\text{sgn}(\delta)}{l}\right) = \text{sgn}(\delta)$. Using the multiplicative property of the Jacobi symbol, this gives
\begin{align}\label{sgn fn}
  -1 \;=\; \left(\frac{\delta}{l}\right) \;=\; \left(\frac{\text{sgn}(\delta)}{l}\right)\left(\frac{|\delta|}{l}\right) \;=\; \text{sgn}(\delta)\left(\frac{|\delta|}{l}\right).
\end{align}
Furthermore, the condition $\delta \equiv 1 \pmod 4$ implies that $|\delta| \equiv \text{sgn}(\delta) \pmod 4$. We now apply the generalised law of quadratic reciprocity (see \cite[p.~192]{Burton}) to the positive odd integers $l$ and $|\delta|$. Since $l \equiv 3 \pmod 4$, the reciprocity factor simplifies to $(-1)^{\frac{l-1}{2} \frac{|\delta|-1}{2}} = (-1)^{\frac{|\delta|-1}{2}} = \text{sgn}(\delta).$ 
Therefore, by~\eqref{sgn fn}, we have
\[
  \left(\frac{l}{|\delta|}\right) \;=\; (-1)^{\frac{l-1}{2} \frac{|\delta|-1}{2}} \left(\frac{|\delta|}{l}\right)
  \;=\;  \text{sgn}(\delta)\left(\frac{|\delta|}{l}\right) \;=\; -1.
\]
Let $\gamma \in [1,4l]\cap\ZZ$ satisfy $|\delta| \equiv \gamma \pmod{4l}$. Since $l \nmid |\delta|$ and $|\delta|$ is odd, we have $\gcd(|\delta|, 4l) = 1$. Combined with $\left(\frac{l}{|\delta|}\right) = -1$, Lemma~\ref{lemma:jacobi_periodicity} guarantees $\gamma \in \mathcal{N}_2(l)$. Since the Jacobi symbol is multiplicative and $\mathcal{N}_1(l)$ is
closed under multiplication modulo $4l$, not every prime factor of
$|\delta|$ can be congruent modulo $4l$ to an element of
$\mathcal{N}_1(l)$. Thus, $|\delta|$ must possess an odd prime divisor $p$ such that $p \equiv u \pmod{4l}$ for some $u \in \mathcal{N}_2(l)$. Therefore, $\left(\frac{l}{p}\right) = \left(\frac{l}{u}\right) = -1$, by Lemma~\ref{lemma:jacobi_periodicity}. Finally, since $p \mid |\delta|$ and $l \nmid |\delta|$, it follows that $\gcd(p, l) = 1$.

Since $p \mid l\delta =(x - 2cmn) $, evaluating~\eqref{eq:g3c-Codd} modulo $p$ yields
\[
\begin{aligned}
w^2 &\equiv w^2 - (x^2 - 4c^2m^2n^2)Z^2 \pmod{p} \\
&\equiv  acm^{2}\bigl(x^{d}-(2cmn)^{d}\bigr) + lc^{s+1}m^{2}t^{2q} \pmod{p} \\
&\equiv l c^{s+1} m^{2} t^{2q} \pmod{p} .
\end{aligned}
\]
By hypothesis, every prime factor $\mu$ of $cmt$ is such that $\mu \equiv r\pmod{4l}$, where $r\in \mathcal{N}_1(l)$, which means that $\left(\frac{l}{\mu}\right) =\left(\frac{l}{r}\right) = 1$. Since $\left(\frac{l}{p}\right) = -1$, we have $p \nmid cmt$. Defining $T \coloneqq c^{(s+1)/2} m t^q$, we see that $(wT^{-1})^{2} \equiv l \pmod{p}$. Since $p \nmid l$, this forces $\left(\frac{l}{p}\right) = 1$, a contradiction.

\medskip
\noindent\textbf{Subcase 2.2: $x \equiv -2cmn \pmod l$.}\quad

Since $d$ is odd, we have $x^d -(2cmn)^{d}\equiv (-2cmn)^d-(2cmn)^{d} \equiv -2(2cmn)^d \pmod l$. 

Reducing the right-hand side of~\eqref{eq:g3c-Codd} modulo $l$ yields
\[acm^{2}\bigl(x^{d}-(2cmn)^{d}\bigr) + lc^{s+1}m^{2}t^{2q}  \equiv -2acm^{2}(2cmn)^d \pmod l.\]
Since $a \equiv c \equiv mn \equiv 1 \pmod l$,~\eqref{eq:g3c-Codd} simplifies to $w^2 \equiv -2^{d+1} m^{2}\pmod l$, that is,
\[(wn)^2 \equiv -2^{d+1} (mn)^{2}\equiv -2^{d+1}\pmod l.\]

Since $d$ and $l$ are odd positive integers, we have $R \coloneqq 2^{\frac{d+1}{2}} \not\equiv 0 \pmod l$. Therefore, $(wn)^2 \equiv -R^2 \pmod l$, and hence $(wnR^{-1})^2 \equiv -1 \pmod l$. This forces $\left(\frac{-1}{l}\right) = 1$, a contradiction.

\medskip
\noindent\textbf{Subcase 2.3: $x \equiv 2cmn + v \pmod{4l}$} for some \textbf{$v \in \mathcal{N}_2(l)$.}\quad

Assume~\eqref{eq:main_eq} admits an integer solution $(x,y,z) $, where $M \coloneqq x - 2cmn \equiv v \pmod{4l}$ for some $v \in \mathcal{N}_2(l)$. We evaluate the Jacobi symbol for the positive odd integer $|M|$.

If $M > 0$, then $|M| \equiv v \pmod{4l}$, and Lemma~\ref{lemma:jacobi_periodicity} immediately yields $\left(\frac{l}{|M|}\right) = \left(\frac{l}{v}\right) = -1$.

If $M < 0$, then $|M| \equiv -v \pmod{4l}$. Because $l \equiv 3 \pmod 4$, the Legendre symbol evaluates to $\left(\frac{|M|}{l}\right) = \left(\frac{-v}{l}\right) = -\left(\frac{v}{l}\right)$. Note that $|M|$ and $v$ are both odd, and the condition $|M| \equiv -v \pmod 4$ implies that one of $|M|$ and $v$ is congruent to $1\pmod{4}$, whereas the other is congruent to $3\pmod{4}$. Consequently, $(-1)^{\frac{|M|-1}{2}} = -(-1)^{\frac{v-1}{2}}$. Applying the generalised law of quadratic reciprocity together with the fact that $v \in \mathcal{N}_2(l)$, we have
\[
  \left(\frac{l}{|M|}\right) \;=\; \left(-(-1)^{\frac{v-1}{2}}\right) \left(-\left(\frac{v}{l}\right)\right) \;=\; (-1)^{\frac{v-1}{2}}\left(\frac{v}{l}\right) \;=\; \left(\frac{l}{v}\right) \;=\; -1.
\]

Therefore, in either case, $\left(\frac{l}{|M|}\right) = -1$. By the multiplicativity of the Jacobi symbol, there exists an odd prime factor $p$ of $ |M|$ (i.e., $p \mid M$) such that $\left(\frac{l}{p}\right) = -1$. However, proceeding identically to Subcase~2.1, reduction of~\eqref{eq:g3c-Codd} modulo $p$ yields $(wT^{-1})^2 \equiv l \pmod p$, forcing $\left(\frac{l}{p}\right) = 1$, a contradiction.
\end{proof}
\begin{remark}
By symmetry, Theorem~\ref{thm:main_general_l} also holds if $cmt$ is replaced by $cnt$.
\end{remark}
\begin{corollary}\label{thm:main_general_l_k}
Let $k$ be an odd positive integer. Let the parameters $l, a, c, d, m, n, q, s$, and $t$ satisfy the hypotheses of Theorem~\ref{thm:main_general_l}. Set $b = a (2 c m n)^{d} - l^{k} c^{s} t^{2q}.$ Then~\eqref{eq:main_eq} has no integer solutions $(x,y,z)$ for which $x$ satisfies any of the following conditions:
\begin{enumerate}[label=(\alph*)]
    \item $x$ is even; \label{cond:1_k}
    \item $x \equiv 3 \pmod 4$; \label{cond:2_k}
    \item $x \equiv -2cmn \pmod l$; \label{cond:3_k}
    \item $x - 2cmn \equiv v \pmod{4l}$ for some $v \in \mathcal{N}_2(l)$. \label{cond:4_k}
\end{enumerate}
\end{corollary}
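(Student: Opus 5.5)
The plan is to rerun the proof of Theorem~\ref{thm:main_general_l} with $l$ replaced by $l^k$ in the constant term, and to check at each step that only the properties of $l$ that survive this replacement are used. Since $k$ is odd and $l\equiv 3\pmod 4$, we have $l^k\equiv 3\pmod 4$. Also $\left(\frac{l^k}{p}\right)=\left(\frac{l}{p}\right)^k=\left(\frac{l}{p}\right)$ for every odd prime $p\neq l$. These two facts are all that Cases~1 and~2 and Subcases~2.2 and~2.3 need. Subcase~2.1, where $x\equiv 2cmn\pmod l$, used the exact power of $l$ dividing the constant term, which is why it is dropped from the statement.

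In order, I would proceed as follows. First, $b\equiv -l^k c^s t^{2q}\equiv 1\pmod 4$, exactly as in \eqref{eq:g3c-prelim}.

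For condition~\ref{cond:1_k}, substitute $x=2\beta$ and complete the square as in \eqref{eq:g3c-Ceven}. The right-hand side is now $acm^2\bigl((2\beta)^d-(2cmn)^d\bigr)+l^kc^{s+1}m^2t^{2q}\equiv 3\pmod 4$. The same two parity cases for $\beta$ then give a contradiction.

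For condition~\ref{cond:2_k}, with $x$ odd we again get $y,z$ even. The analogue of \eqref{eq:g3c-Codd} holds with $l$ replaced by $l^k$, and reducing modulo $4$ gives $x\equiv 1\pmod 4$.

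For condition~\ref{cond:3_k}, reduce the analogue of \eqref{eq:g3c-Codd} modulo $l$. The term $l^kc^{s+1}m^2t^{2q}$ vanishes because $k\ge 1$, so the computation of Subcase~2.2 goes through verbatim and yields $-1$ as a square modulo $l$, which is impossible.

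For condition~\ref{cond:4_k}, the argument of Subcase~2.3 shows that $M=x-2cmn$ satisfies $\left(\frac{l}{|M|}\right)=-1$. Hence some odd prime $p\mid M$ has $\left(\frac{l}{p}\right)=-1$; such a $p$ is distinct from $l$ and, by the hypothesis on the prime factors of $cmt$, satisfies $p\nmid cmt$. Reducing modulo $p$ gives $w^2\equiv l^k c^{s+1}m^2t^{2q}\pmod p$. With $T=c^{(s+1)/2}mt^q\, l^{(k-1)/2}$, which is a unit modulo $p$, this becomes $(wT^{-1})^2\equiv l\pmod p$. That forces $\left(\frac{l}{p}\right)=1$, a contradiction. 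Equivalently, one may note $\left(\frac{l^k}{p}\right)=\left(\frac{l}{p}\right)$ directly.

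The main obstacle is recognising precisely where the proof of Theorem~\ref{thm:main_general_l} breaks down, so that nothing beyond what is claimed is asserted. In Subcase~2.1 one wrote $w=lw_1$ and divided \eqref{eq:step2} by $l$, using that $l$ exactly divides the constant term. When $k\ge 3$, $l^2$ divides $l^k c^{s+1}m^2t^{2q}$, so after dividing by $l$ the reduced equation no longer produces the congruence $-4\delta n^2Z^2\equiv t^{2q}\pmod l$. The Legendre-symbol argument giving $\left(\frac{\delta}{l}\right)=-1$ therefore collapses. I would simply verify that none of the other subcases use this divisibility, which confirms that only the class $x\equiv 2cmn\pmod l$ must be excluded from the conclusion.
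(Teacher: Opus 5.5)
Your proposal is correct and follows the paper's own proof. The paper likewise notes that $l^k\equiv l\pmod 4$ and $\bigl(\frac{l^k}{p}\bigr)=\bigl(\frac{l}{p}\bigr)$ for primes $p\nmid l$, and then reruns the proof of Theorem~\ref{thm:main_general_l} verbatim except for Subcase~2.1. Your explicit choice $T=c^{(s+1)/2}mt^q\,l^{(k-1)/2}$ in the modulo-$p$ step, and your observation that Subcase~2.2 only needs $l\mid l^k$, just make that ``verbatim'' claim precise.
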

\begin{proof}
Since $l$ and $k$ are odd positive integers, $l^k \equiv l \pmod{4}$ and $\left(\frac{l^k}{p}\right) = \left(\frac{l}{p}\right)$ for any prime $p \nmid l$. Therefore, the proof follows verbatim from that of Theorem~\ref{thm:main_general_l} except for Subcase~2.1.
\end{proof}
\begin{remark}
    In Subcase~2.1 of the proof of Theorem~\ref{thm:main_general_l}, reducing~\eqref{eq:step2} modulo $l$ yields the term $l^{k-1}c^s t^{2q}$. Since this term vanishes modulo $l$ for $k \ge 2$, it imposes no constraint. Consequently, solutions satisfying $x \equiv 2cmn \pmod{l}$ cannot be excluded in Corollary~\ref{thm:main_general_l_k}.
\end{remark}
\begin{remark}
    Note that \cite[Corollary~2.2]{PrakashChakraborty} follows directly from~\ref{cond:3_k} and~\ref{cond:4_k} of Corollary~\ref{thm:main_general_l_k} by setting $l=3$ and $c=m=n=s=t=q=1$. 
\end{remark}
The following remark shows that the hypothesis $l \equiv 3 \pmod 4$ in Theorem~\ref{thm:main_general_l} cannot be extended to primes congruent to $1\pmod{4}$. 
\begin{remark}\label{imposibillity l5}
Setting $(a,c,m,n,t,d,q,s,l) = (1,1,1,1,1,5,1,1,5)$ in Theorem~\ref{thm:main_general_l}, we have $b = a (2cmn)^{d} -
l c^{s} t^{2q} = 2^{5} - 5 = 27$, and~\eqref{eq:main_eq} becomes
$  x^5 - y^2 - z^2 + xyz - 27 = 0$, which has the integer solution $(x,y,z) = (4,107,26)$, violating~\ref{cond:1} of Theorem~\ref{thm:main_general_l}.
\end{remark}

\section{The case \texorpdfstring{$l = -1$}{l = -1} and small primes \texorpdfstring{$l \in \{3,7, 11, 19\}$}{l = 3,7, 11, 19}}\label{sec:smallprimes}
In this section, we investigate the insolvability of~\eqref{eq:main_eq} corresponding to $b = a (2 c m n)^{d} - l c^{s} t^{2q}$ with $l \in \{-1, 3, 7, 11, 19\}$. For $l = -1$, Theorem~\ref{thm:general-plus1} restricts any possible solution of~\eqref{eq:main_eq} to the congruence class $x \equiv 1 \pmod 4$ and to the region $x \le 2cmn$. In contrast, Theorem~\ref{thm:main3_gen} establishes that~\eqref{eq:main_eq} admits no integer solutions for $l = 3$. Finally, for each prime $l\in\{7,11,19\}$, the explicit expressions for $\mathcal{N}_1(l)$ yield insolvability theorems (see Theorems~\ref{thm:main7_gen},~\ref{thm:main11_gen}, and~\ref{thm:main19_gen}) that restrict any possible integer solution $(x,y,z)$ of~\eqref{eq:main_eq} to a few exceptional residue classes of  $x$  modulo $4l$.

\subsection{The case \texorpdfstring{$l = -1$}{l = -1}.}
\label{sec:general+1c}
In this case, Theorem~\ref{thm:general-plus1} establishes that any integer solution to~\eqref{eq:main_eq} must satisfy specific congruence conditions and certain upper bounds on $x$.
\begin{theorem}\label{thm:general-plus1}
Let $a, c, m, n, t \in \ZZ$ and $d, q, s \in \ZZ_{>0}$ be parameters such that $d, m, n$, and $s$ are odd with $d\ge 3$. Assume that $a \equiv c \equiv 1 \pmod{4}$, and that every prime factor of the product $cmt$ is congruent to $1 \pmod{4}$. Set $b = a  (2c m n)^d + c^s t^{2q}$. Then~\eqref{eq:main_eq} has no integer solutions $(x,y,z)$ for which $x$ satisfies any of the following conditions:
\begin{enumerate}[label=(\alph*)]
    \item $x$ is even;
    \item $x \equiv 3 \pmod 4$;
    \item $x \equiv 1 \pmod 4$ and $x > 2cmn$. \label{condtion x>2cmn}
\end{enumerate}
\end{theorem}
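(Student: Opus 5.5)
The plan is to run the proof of Theorem~\ref{thm:main_general_l} again with $l$ replaced by $-1$, so that $b = a(2cmn)^d + c^s t^{2q}$. The role of $\left(\frac{l}{p}\right)$ is then played by $\left(\frac{-1}{p}\right)$, and the role of $\mathcal{N}_1(l)$ by the class $1 \pmod 4$.

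\textbf{Preliminaries.} Every prime factor of $cmt$ is $\equiv 1 \pmod 4$. Hence $t$ is odd, provided $t \neq 0$; I will read this as implicit in the hypotheses, as Lemma~\ref{lem:t_odd} does in the earlier setting. Since $d \ge 3$ and $c \equiv 1 \pmod 4$, we get $4 \mid (2cmn)^d$, and therefore
\[
b \equiv c^s t^{2q} \equiv 1 \pmod 4 .
\]

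\textbf{Case (a): $x$ even.} Write $x = 2\beta$ and substitute $U = cm^2 y - \beta z$, exactly as in~\eqref{eq:g3c-Ceven}. This gives
\[
U^2 - (\beta^2 - c^2m^2n^2)z^2 = acm^2\bigl((2\beta)^d - (2cmn)^d\bigr) - c^{s+1}m^2t^{2q} .
\]
The right-hand side is $\equiv -1 \equiv 3 \pmod 4$. The same two subcases as before ($\beta$ even or $\beta$ odd) then give $U^2 + z^2 \equiv 3$ or $U^2 \equiv 3 \pmod 4$. Both are impossible.

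\textbf{Case (b): $x$ odd.} Since $a$ and $b$ are both odd, reducing~\eqref{eq:main_eq} modulo $2$ gives $y^2 - yz + z^2 \equiv 0 \pmod 2$. This forces $y = 2Y$ and $z = 2Z$. Substituting $w = 2cm^2Y - xZ$ as in~\eqref{eq:g3c-Codd} gives
\[
w^2 - (x^2 - 4c^2m^2n^2)Z^2 = acm^2\bigl(x^d - (2cmn)^d\bigr) - c^{s+1}m^2t^{2q} . \tag{$\ast$}
\]
The left-hand side is divisible by $4$. On the right, $x^d \equiv x \pmod 4$ and $4 \mid (2cmn)^d$, so reducing modulo $4$ gives $0 \equiv x - 1 \pmod 4$. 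This excludes $x \equiv 3 \pmod 4$.

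\textbf{Case (c): $x \equiv 1 \pmod 4$ and $x > 2cmn$.} Set $M = x - 2cmn$. Then $M > 0$ and $M \equiv 1 - 2 \equiv 3 \pmod 4$, because $2cmn \equiv 2 \pmod 4$. A positive integer $\equiv 3 \pmod 4$ cannot have all its prime factors $\equiv 1 \pmod 4$. So $M$ has a prime divisor $p \equiv 3 \pmod 4$. This positivity is exactly where the hypothesis $x > 2cmn$ is used: for $M < 0$ one only knows $|M| \equiv 1 \pmod 4$, and the argument breaks down.

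Now $p \mid x - 2cmn$, so $p$ divides both $x^2 - 4c^2m^2n^2$ and $x^d - (2cmn)^d$. Reducing $(\ast)$ modulo $p$ gives $w^2 \equiv -c^{s+1}m^2t^{2q} \pmod p$. Every prime factor of $cmt$ is $\equiv 1 \pmod 4$, so $p \nmid cmt$. Put $T = c^{(s+1)/2} m t^q$, which is an integer because $s$ is odd. Then $(wT^{-1})^2 \equiv -1 \pmod p$. This says $\left(\frac{-1}{p}\right) = 1$, contradicting $p \equiv 3 \pmod 4$.

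The only step needing care is the choice of prime in case (c), namely extracting $p \equiv 3 \pmod 4$ from $M$ using positivity. The rest is a transcription of the proof of Theorem~\ref{thm:main_general_l}.
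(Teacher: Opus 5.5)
Your proof is correct and is essentially the paper's argument: the same completed-square identities reduced modulo $4$ exclude $x$ even and $x\equiv 3\pmod 4$. For $x>2cmn$, the positive integer $M=x-2cmn\equiv 3\pmod 4$ yields a prime $p\equiv 3\pmod 4$ dividing $M$, and reducing modulo $p$ gives $(wT^{-1})^2\equiv -1\pmod p$. The differences are only cosmetic: you write out the parity cases with the sign of the $c^{s+1}m^2t^{2q}$ term adjusted instead of citing Theorem~\ref{thm:main_general_l}, and you find $p$ by the elementary fact that a product of primes $\equiv 1 \pmod 4$ is $\equiv 1 \pmod 4$ rather than via $\left(\frac{-1}{M}\right)=-1$. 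Your assumption $t\neq 0$ is in fact forced by the hypotheses, since otherwise $2$ would divide $cmt=0$.
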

\begin{proof}
Proceeding identically to the proof of Theorem~\ref{thm:main_general_l},~\eqref{eq:main_eq} admits no integer solutions when $x$ is even or $x \equiv 3 \pmod 4$, forcing $x \equiv 1 \pmod 4$. 

Since $c \equiv 1 \pmod 4$ and $mn$ is odd, we have $2cmn \equiv 2 \pmod 4$. Assume that ~\eqref{eq:main_eq} has an integer solution $(x,y,z)$ with $x>2cmn$. Setting $M \coloneqq x - 2cmn$, we have $M>0$ and $M  \equiv 1 - 2 \equiv 3 \pmod 4$. Consequently, $\left(\frac{-1}{M}\right) = (-1)^{\frac{M-1}{2}} = -1$ (see \cite[p.~192]{Burton}). By the multiplicativity of the Jacobi symbol, there exists an odd prime factor $p$ of $ M$ (i.e., $p \mid M$) such that $\left(\frac{-1}{p}\right) = -1$, which implies $p \equiv 3 \pmod 4$. 

Reducing~\eqref{eq:g3c-Codd} modulo $p$ yields $w^2 \equiv -T^2 \pmod p$, where $T \coloneqq c^{(s+1)/2} m t^q \not\equiv 0 \pmod p$ by hypothesis. This implies that $(wT^{-1})^2 \equiv -1 \pmod p$, which is impossible since $-1$ is a quadratic nonresidue for $p \equiv 3 \pmod 4$, providing a contradiction.
\end{proof}
\begin{remark}
To demonstrate that Theorem~\ref{thm:general-plus1} is sharp, we set $(a,c,m,n,t,d,q,s)=(-3,5,1,1,5,3,1,1)$, which yields $b=a(2cmn)^{d}+c^{s}t^{2q}=(-3)(2\cdot5)^{3}+5\cdot 5^{2}=-2875$ and $2cmn=10$. These parameters satisfy all the hypotheses of Theorem~\ref{thm:general-plus1}. With this choice of parameters,~\eqref{eq:main_eq} reduces to $ -3x^{3} - 5\bigl(y^{2} + z^{2}\bigr) + xyz + 2875 = 0$, for which $(x,y,z)=(9,-8,4)$ is a solution. Here, $x\equiv 1\pmod{4}$ and $x=2cmn-1$, which shows that the bound $x>2cmn$ in Theorem~\ref{thm:general-plus1}\ref{condtion x>2cmn} cannot be weakened.
\end{remark}
\subsection{The case \texorpdfstring{$l = 3$}{l = 3}}
\label{sec:general-3c}
As remarked earlier, the case corresponding to $l = 3$ is unique among other small primes examined in this section, as Theorem~\ref{thm:main3_gen} guarantees complete insolvability for~\eqref{eq:main_eq}.

\begin{theorem}\label{thm:main3_gen}
Let $a, c, m, n, t \in \ZZ$ and $d, q, s \in \ZZ_{>0}$ be such that $d, m, n$, and $s$ are odd, and $3 \mid d$. Assume that $a \equiv c \equiv 1 \pmod{12}$ and $mn \equiv 1 \pmod{3}$, and suppose that every prime factor of $cmt$ is congruent to $\pm 1 \pmod{12}$. Set $b = a  (2c m n)^d - 3c^s t^{2q}$. Then~\eqref{eq:main_eq} has no integer solutions $(x,y,z) $.
\end{theorem}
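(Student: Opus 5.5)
The plan is to deduce Theorem~\ref{thm:main3_gen} directly from Theorem~\ref{thm:main_general_l} with $l=3$, by checking that the four excluded conditions already cover every integer $x$.

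First we verify the hypotheses. Take $l=3$, which is a prime with $3\equiv 3 \pmod 4$. Then $a\equiv c\equiv 1 \pmod{12}$ is exactly $a\equiv c\equiv 1\pmod{4l}$. The integers $d,m,n,s$ are odd, $3\mid d$, and $mn\equiv 1\pmod 3$, all as required. For the prime-factor condition we compute $\mathcal{N}_1(3)$ from Definition~\ref{N_1 definition}. The residues in $[1,12)$ coprime to $12$ are $1,5,7,11$, and Proposition~\ref{prop:leg3} together with Lemma~\ref{lemma:jacobi_periodicity} gives $\mathcal{N}_1(3)=\{1,11\}$ and $\mathcal{N}_2(3)=\{5,7\}$. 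So "every prime factor of $cmt$ is $\equiv\pm1\pmod{12}$" is precisely the hypothesis of Theorem~\ref{thm:main_general_l}. The definitions of $b$ also agree. Hence conditions \ref{cond:1}--\ref{cond:4} of that theorem all exclude solutions.

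Next, suppose $(x,y,z)$ is a solution. By \ref{cond:1} and \ref{cond:2}, we must have $x\equiv 1\pmod 4$. By \ref{cond:3}, $x\not\equiv \pm 2cmn \pmod 3$. Modulo $3$, since $c\equiv mn\equiv 1$, we get $2cmn\equiv 2$ and $-2cmn\equiv 1$. So $x\not\equiv 1,2\pmod 3$, which forces $x\equiv 0\pmod 3$.

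It remains to show that condition \ref{cond:4} catches this last class. We have $2cmn\equiv 2\pmod 4$ because $c\equiv 1\pmod 4$ and $mn$ is odd, and $2cmn\equiv 2\pmod 3$. By CRT, $2cmn\equiv 2\pmod{12}$. Meanwhile $x\equiv 1\pmod 4$ and $x\equiv 0\pmod 3$ give $x\equiv 9\pmod{12}$. Therefore $x-2cmn\equiv 7\pmod{12}$, and $7\in\mathcal{N}_2(3)$. Condition \ref{cond:4} then rules this case out as well, so no solution exists.

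There is no real obstacle. The only care needed is the mod-$3$ and mod-$4$ bookkeeping: checking that the classes left open by \ref{cond:1}--\ref{cond:3} reduce to the single class $x\equiv 9\pmod{12}$, and that this class lands exactly in $\mathcal{N}_2(3)$.
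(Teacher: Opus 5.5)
Your proposal is correct and is essentially the paper's own proof: specialise Theorem~\ref{thm:main_general_l} to $l=3$ (where $\mathcal{N}_1(3)=\{1,11\}$ and $\mathcal{N}_2(3)=\{5,7\}$), use conditions~\ref{cond:1}--\ref{cond:3} to force $x\equiv 9\pmod{12}$, and then note that $x-2cmn\equiv 7\pmod{12}$ with $7\in\mathcal{N}_2(3)$, which condition~\ref{cond:4} excludes. Your explicit checks of the hypotheses and of $2cmn\equiv 2\pmod{12}$ spell out steps the paper leaves implicit.
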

\begin{proof}
     For $l=3$, by Proposition~\ref{prop:leg3}, we have $\mathcal{N}_1(l) = \{1, 11\} $ and $\mathcal{N}_2(l) = \{5, 7\} $. By applying~\ref{cond:1},~\ref{cond:2}, and~\ref{cond:3} of Theorem~\ref{thm:main_general_l} with $l=3$, any integer solution $(x,y,z)$ of~\eqref{eq:main_eq} must simultaneously satisfy $x\equiv1\pmod{4}$ and $x \equiv 0\pmod{3}$. Therefore, by CRT, $x \equiv 9\pmod{12}$, which shows that $x-2cmn\equiv9-2\equiv7\pmod{12}$, where $7\in \mathcal{N}_{2}(l)$, which is impossible by Thoerem~\ref{thm:main_general_l}\ref{cond:4}. Thus, no such integer solution exists.
\end{proof}

\begin{corollary}\label{cor:3cs}
Let $a, c,d,s \in \ZZ$ be such that $d$ and $s$ are odd positive integers and $3 \mid d$. Assume that $a \equiv c \equiv 1 \pmod{12}$ and every prime factor of $c$ is congruent to $\pm 1 \pmod{12}$. Set $b = 2^d a c^d - 3c^s$. Then the Diophantine equation
\begin{equation}
    ax^d - c(y^2 + z^2) + xyz - b = 0
\end{equation}
has no integer solutions $(x,y,z) $. In particular, no such solutions exist when $s = 1$, which corresponds to $b = 2^d a c^d - 3c$.
\end{corollary}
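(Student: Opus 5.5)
This is a direct specialisation of Theorem~\ref{thm:main3_gen}. I will set $m = n = t = q = 1$ there and keep $a, c, d, s$ as in the corollary. The plan is to check each hypothesis of Theorem~\ref{thm:main3_gen} for this choice and to confirm that equation~\eqref{eq:main_eq} and the parameter $b$ reduce to the stated ones.

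\textbf{Hypotheses.} With $m=n=1$, both $m$ and $n$ are odd, and $mn = 1 \equiv 1 \pmod 3$. The assumptions that $d$ and $s$ are odd positive integers with $3 \mid d$, and that $a \equiv c \equiv 1 \pmod{12}$, are carried over unchanged. The prime-factor condition concerns $cmt = c$. By hypothesis every prime factor of $c$ is congruent to $\pm 1 \pmod{12}$, which is exactly what Theorem~\ref{thm:main3_gen} requires. This holds vacuously if $c = \pm 1$; in fact $c \equiv 1 \pmod{12}$ rules out $c = -1$, so only $c = 1$ needs that remark. We also have $q = 1 > 0$, as needed.

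\textbf{The equation.} With these values,
\[
b = a(2cmn)^d - 3c^s t^{2q} = a(2c)^d - 3c^s = 2^d a c^d - 3c^s,
\]
and~\eqref{eq:main_eq} becomes $ax^d - c(y^2+z^2) + xyz - b = 0$. Theorem~\ref{thm:main3_gen} then gives that this equation has no integer solutions. The case $s = 1$ is a further specialisation, since $1$ is an odd positive integer, and it gives $b = 2^d a c^d - 3c$.

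There is no real obstacle here. The one step that needs care is making sure the prime-factor hypothesis on $cmt$ really reduces to the stated hypothesis on $c$ when $m = t = 1$, including the degenerate case $c = 1$, where there are no prime factors to check.
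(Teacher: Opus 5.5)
Your proposal is correct and matches the paper's proof: the paper also obtains the corollary from Theorem~\ref{thm:main3_gen} by setting $m=n=t=q=1$, so that $cmt=c$ and $b$ reduces to $2^d a c^d - 3c^s$. Your check of the hypotheses is accurate, including the remark that $c \equiv 1 \pmod{12}$ excludes $c=-1$, so the prime-factor condition holds vacuously only when $c=1$.
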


\begin{proof}
This follows from Theorem~\ref{thm:main3_gen} with $m=n=t=q=1$. 
\end{proof}
\begin{remark}
    Note that \cite[Theorem~2.1]{PrakashChakraborty}, as well as \cite[Theorem~1.3]{VaishyaSharma} and \cite[Theorem~1]{MajumdarSury}, follow immediately from Corollary~\ref{cor:3cs} by setting $c=1$ alongside specific choices for the other parameters. Furthermore, as shown in \cite[Example~2]{PrakashChakraborty}, the constant $3$ in the definition of $b$ in Theorem~\ref{thm:main3_gen} cannot be replaced by an arbitrary power of $3$, because doing so would allow the~\eqref{eq:main_eq} to admit possible integer solutions.
\end{remark}

\subsection{The case \texorpdfstring{$l = 7$}{l = 7}}
The explicit expression for $\mathcal{N}_1(7)$ yields the following theorem.
\begin{theorem}\label{thm:main7_gen}
Let $a, c, m, n, t \in \ZZ$ and $d, q, s \in \ZZ_{>0}$ be such that $d, m, n$, and $s$ are odd, and $7 \mid d$. Assume that $a \equiv c \equiv 1 \pmod{28}$ and $mn \equiv 1 \pmod{7}$, and suppose that every prime factor of $cmt$ is congruent to $r \pmod{28}$, where $r\in  \{1, 3, 9, 19, 25, 27\}$. Set $b = a  (2c m n)^d - 7c^s t^{2q}$. Then~\eqref{eq:main_eq} has no integer solutions $(x,y,z) $ with $x \not\equiv 1,21 \pmod{28}$.
\end{theorem}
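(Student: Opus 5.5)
The plan is to obtain this as a direct specialisation of Theorem~\ref{thm:main_general_l} with $l=7$, in the same way that Theorem~\ref{thm:main3_gen} was derived for $l=3$. The work consists of checking the hypotheses and then doing a residue computation modulo $28$.

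\textbf{Step 1: hypotheses and the sets $\mathcal{N}_i(7)$.} The hypotheses of Theorem~\ref{thm:main_general_l} with $l=7$ are the hypotheses stated here: $d,m,n,s$ are odd, $7\mid d$, $a\equiv c\equiv 1 \pmod{28}$ and $mn\equiv 1\pmod 7$. The only remaining point is to identify $\mathcal{N}_1(7)=\{1,3,9,19,25,27\}$ and $\mathcal{N}_2(7)=\{5,11,13,15,17,23\}$. Proposition~\ref{prop:leg7} gives these values for primes. For the non-prime elements ($1,9,15,25,27$) I would evaluate the Jacobi symbol directly. For example, $\left(\frac{7}{9}\right)=\left(\frac{7}{3}\right)^2=1$ and $\left(\frac{7}{27}\right)=\left(\frac{7}{3}\right)^3=\left(\frac{1}{3}\right)^3=1$. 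Alternatively, Lemma~\ref{lemma:jacobi_periodicity} reduces each class to a prime representative. With this, the hypothesis on the prime factors of $cmt$ is exactly the one in Theorem~\ref{thm:main_general_l}.

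\textbf{Step 2: compute $2cmn$ modulo $28$.} Since $c\equiv 1\pmod 4$ and $mn$ is odd, $2cmn\equiv 2\pmod 4$. Since $c\equiv mn\equiv 1\pmod 7$, $2cmn\equiv 2\pmod 7$. By the CRT, $2cmn\equiv 2\pmod{28}$.

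\textbf{Step 3: sieve the residues of $x$ modulo $28$.} Conditions \ref{cond:1} and \ref{cond:2} of Theorem~\ref{thm:main_general_l} force $x\equiv 1\pmod 4$. This leaves the classes
\[
x\in\{1,5,9,13,17,21,25\}\pmod{28},
\]
whose residues modulo $7$ are $1,5,2,6,3,0,4$ respectively.

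Condition \ref{cond:3} excludes $x\equiv \pm 2\pmod 7$, which removes $x\equiv 5$ and $x\equiv 9$.

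For the remaining classes $1,13,17,21,25$, the values of $x-2 \pmod{28}$ are $27,11,15,19,23$. Condition \ref{cond:4} removes those lying in $\mathcal{N}_2(7)$, namely $11,15,23$, corresponding to $x\equiv 13,17,25$. This leaves exactly $x\equiv 1,21\pmod{28}$, for which $x-2\equiv 27,19\in\mathcal{N}_1(7)$, so no condition of the theorem applies to them.

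\textbf{Main obstacle.} There is no conceptual obstacle, since the content is already in Theorem~\ref{thm:main_general_l}. The only places needing care are the correct identification of $\mathcal{N}_1(7)$ including its composite members, and the observation that $2cmn\equiv 2\pmod{28}$ requires both $c\equiv 1\pmod 4$ and $mn\equiv 1\pmod 7$.
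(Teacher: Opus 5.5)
Your proposal is correct and matches the paper's proof. Both specialise Theorem~\ref{thm:main_general_l} to $l=7$, use \ref{cond:1}--\ref{cond:3} to reduce to $x\equiv 1,13,17,21,25 \pmod{28}$, and then discard $x-2cmn\equiv 11,15,23 \in \mathcal{N}_2(7)$ via \ref{cond:4}. Your explicit checks that $2cmn\equiv 2 \pmod{28}$ and that the composite members of $\mathcal{N}_1(7)$ and $\mathcal{N}_2(7)$ are classified correctly via Jacobi symbols fill in details the paper leaves implicit.
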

\begin{proof}
    For $l=7$, by Proposition~\ref{prop:leg7}, we have $\mathcal{N}_1(l) = \{1, 3, 9, 19, 25, 27\} $ and $\mathcal{N}_2(l) = \{ 5, 11, 13, 15, 17, 23\}  $. By applying~\ref{cond:1},~\ref{cond:2} and~\ref{cond:3} of Theorem~\ref{thm:main_general_l} with $l=7$, any integer solution $(x,y,z)$ of~\eqref{eq:main_eq} must satisfy $x\equiv1\pmod{4}$ and $x \equiv 0,1,3,4,6\pmod{7}$. Therefore, by CRT, $x \equiv 1,13,17,21,25\pmod{28}$, which shows that $x-2cmn\equiv11,15,19,23,27\pmod{28}$. Note that the cases $x - 2cmn \equiv 11, 15, 23 \pmod{28}$ are ruled out by Theorem~\ref{thm:main_general_l}\ref{cond:4}. Therefore, $x-2cmn\equiv19,27\pmod{28}$, that is, $x\equiv 1,21\pmod{28}$ are the only two possibilities for an integer solution $(x,y,z)$ to~\eqref{eq:main_eq}.
\end{proof}
\begin{example}\label{ex:residue-21}
To see that~\eqref{eq:main_eq} can indeed possess valid integer solutions for the congruence classes $x \equiv 1 \pmod{28}$ and $x \equiv 21 \pmod{28}$, we set the parameters $a=c=m=n=q=s=t=1$ and $d=7$. This choice satisfies all hypotheses of Theorem~\ref{thm:main7_gen}. The constant $b$ simplifies to $b = 2^7 - 7 = 121$, reducing~\eqref{eq:main_eq} to
\begin{equation}\label{eq:ex_residue_21}
    x^7 - (y^2+z^2) + xyz = 121.
\end{equation}
Explicit integer solutions $(x, y, z)$ to~\eqref{eq:ex_residue_21} for the following residue classes are as follows:
\begin{enumerate}[label=(\alph*)]
\item for \textbf{$x \equiv 1 \pmod{28}$}: $(-391, -1288032364, 6068448)$;
    \item for \textbf{$x \equiv 21 \pmod{28}$}: $(-7, 348, -484)$.
\end{enumerate}
This confirms that the restrictions $x \not\equiv 1, 21 \pmod{28}$ in Theorem~\ref{thm:main7_gen} are sharp.
\end{example}

\subsection{The case \texorpdfstring{$l = 11$}{l = 11}}
The explicit expression for $\mathcal{N}_1(11)$ yields the following theorem.
\begin{theorem}\label{thm:main11_gen}
Let $a, c, m, n, t \in \ZZ$ and $d, q, s \in \ZZ_{>0}$ be such that $d, m, n$, and $s$ are odd, and $11 \mid d$. Assume that $a \equiv c \equiv 1 \pmod{44}$ and $mn \equiv 1 \pmod{11}$, and suppose that every prime factor of $cmt$ is congruent to $r \pmod{44}$, where $r\in  \{1, 5,7, 9, 19, 25, 35,37,39,43\}$. Set $b = a  (2c m n)^d - 11c^s t^{2q}$. Then~\eqref{eq:main_eq} has no integer solutions $(x,y,z) $ with $x \not\equiv 1, 21, 37, 41 \pmod{44}$.
\end{theorem}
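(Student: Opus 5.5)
The plan is to argue exactly as in the proofs of Theorems~\ref{thm:main3_gen} and~\ref{thm:main7_gen}: specialise Theorem~\ref{thm:main_general_l} to $l=11$ and do a finite residue computation modulo $44$. The hypotheses of Theorem~\ref{thm:main11_gen} are precisely those of Theorem~\ref{thm:main_general_l} with $l=11$. Indeed, $11\equiv 3\pmod 4$, and by Proposition~\ref{prop:leg11} the listed residues form $\mathcal{N}_1(11)=\{1,5,7,9,19,25,35,37,39,43\}$, with $\mathcal{N}_2(11)=\{3,13,15,17,21,23,27,29,31,41\}$. So all four conditions \ref{cond:1}--\ref{cond:4} of that theorem are available.

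The first step is to pin down $2cmn$ modulo $44$. Since $mn$ is odd and $mn\equiv 1\pmod{11}$, we get $mn\equiv 1\pmod{22}$, so $2mn\equiv 2\pmod{44}$. Combined with $c\equiv 1\pmod{44}$, this gives $2cmn\equiv 2\pmod{44}$. In particular $2cmn\equiv 2\pmod{11}$, so condition~\ref{cond:3} excludes $x\equiv 2,9\pmod{11}$.

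Next I will run through the candidates. By \ref{cond:1} and \ref{cond:2}, any solution has $x\equiv 1\pmod 4$, i.e.\ $x\in\{1,5,9,\dots,41\}\pmod{44}$. Reducing these modulo $11$ and discarding those congruent to $2$ or $9$ (namely $x\equiv 9,13$) leaves
\[
x\equiv 1,5,17,21,25,29,33,37,41 \pmod{44}.
\]
For these, $x-2cmn\equiv 43,3,15,19,23,27,31,35,39\pmod{44}$. The values $3,15,23,27,31$ lie in $\mathcal{N}_2(11)$, so condition~\ref{cond:4} rules out $x\equiv 5,17,25,29,33$. The survivors are $x-2cmn\equiv 43,19,35,39$, all of which lie in $\mathcal{N}_1(11)$, corresponding to $x\equiv 1,21,37,41\pmod{44}$, as claimed.

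The only delicate point is the first step: justifying $2cmn\equiv 2\pmod{44}$ rather than merely modulo $11$. This needs the parity of $mn$ together with $mn\equiv 1\pmod{11}$. Everything after that is bookkeeping, which I would double-check against Proposition~\ref{prop:leg11}.
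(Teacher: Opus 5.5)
Your proposal is correct and follows the paper's proof essentially verbatim. You specialise Theorem~\ref{thm:main_general_l} to $l=11$, use conditions (a)--(c) to reduce to $x\equiv 1,5,17,21,25,29,33,37,41\pmod{44}$, and discard the classes with $x-2cmn\in\mathcal{N}_2(11)$ via condition (d); your explicit derivation of $2cmn\equiv 2\pmod{44}$ from $mn$ odd and $mn\equiv 1\pmod{11}$ makes precise a step the paper uses only implicitly.
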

\begin{proof}
    For $l=11$, by Proposition~\ref{prop:leg11}, we have $\mathcal{N}_1(l) = \{1, 5,7, 9, 19, 25, 35,37,39,43\} $ and $\mathcal{N}_2(l) = \{ 3, 13, 15, 17, 21, 23,27,29,31,41\}$. By~\ref{cond:1},~\ref{cond:2} and~\ref{cond:3} of Theorem~\ref{thm:main_general_l} with $l=11$, any integer solution $(x,y,z)$ of~\eqref{eq:main_eq} must satisfy $x\equiv1\pmod{4}$ and $x \equiv 0,1,3,4,5,6,7,8,10\pmod{11}$. Therefore, by CRT, $x \equiv 1,5,17,21,25,29,33,37,41\pmod{44}$, which shows that $x-2cmn\equiv3,15,19,23,27,31,35,39,43\pmod{44}$. Note that the cases $x - 2cmn \equiv 3,15,23,27,31 \pmod{44}$ are ruled out by Theorem~\ref{thm:main_general_l}\ref{cond:4}. Therefore, $x-2cmn\equiv19,35,39,43\pmod{44}$, that is, $x\equiv 1,21,37,41\pmod{44}$ are the only possibilities for an integer solution $(x,y,z)$ to~\eqref{eq:main_eq}.
\end{proof}
\begin{example}
To see that~\eqref{eq:main_eq} can indeed possess valid integer solutions for the congruence classes $x \equiv 1, 21, 37, 41 \pmod{44}$, we consider the parameter family $a=c=m=n=q=s=1$ and $d=11$. This choice satisfies all hypotheses of Theorem~\ref{thm:main11_gen}. The constant $b$ simplifies to $b = a  (2c m n)^d - 11c^s t^{2q} = 2^{11} - 11t^2 = 2048 - 11t^2$, reducing~\eqref{eq:main_eq} to
\begin{equation}\label{eq:reduced_example}
    y^2 - xyz + z^2 = x^{11} - 2048 + 11t^2.
\end{equation}

Explicit integer solutions $(x, y, z, t)$ satisfying~\eqref{eq:reduced_example} for the following residue classes are as follows:

\begin{enumerate}[label=(\alph*)]
\item for \textbf{$x \equiv 1 \pmod{44}$:} $(1, 38, -10, 19)$ and $(1, 362, 488, 133)$; 
    \item for \textbf{$x \equiv 21 \pmod{44}$:} $(21, 3612744, 80078796, 445)$ and $(21, -12506232, 736128, 3829)$;
    \item for \textbf{$x \equiv 37 \pmod{44}$:} $(37, 8762096, -289694164, 5)$;
    \item for \textbf{$x \equiv 41 \pmod{44}$:} $(-3, -444, 972, 49)$.
\end{enumerate}
\end{example}

\subsection{The case \texorpdfstring{$l = 19$}{l = 19}}
The explicit expression for $\mathcal{N}_1(19)$ yields the following theorem.

\begin{theorem}\label{thm:main19_gen}
Let $a, c, m, n, t \in \ZZ$ and $d, q, s \in \ZZ_{>0}$ be such that $d, m, n$, and $s$ are odd, and $19 \mid d$. Assume that $a \equiv c \equiv 1 \pmod{76}$ and $mn \equiv 1 \pmod{19}$, and suppose that every prime factor of $cmt$ is congruent to $r \pmod{76}$, where $r\in  \{1, 3, 5, 9, 15, 17, 25, 27, 31, \allowbreak  45, 49, 51, 59, 61, 67, 71, 73, 75\}$. Set $b = a  (2c m n)^d - 19c^s t^{2q}$. Then~\eqref{eq:main_eq} has no integer solutions $(x,y,z) $ with $x \not\equiv 1,5,29,33,53,61,69,73 \pmod{76}$.
\end{theorem}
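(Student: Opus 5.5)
The plan is to proceed exactly as in the proofs of Theorems~\ref{thm:main7_gen} and~\ref{thm:main11_gen}: specialise Theorem~\ref{thm:main_general_l} to $l=19$ and sieve the residue classes of $x$ modulo $76$. First I will check that the hypotheses of Theorem~\ref{thm:main7_gen}'s analogue are met. By Proposition~\ref{prop:leg19}, $\mathcal{N}_1(19)$ is precisely the set $\{1,3,5,9,15,17,25,27,31,45,49,51,59,61,67,71,73,75\}$ listed in the statement. Hence the condition on the prime factors of $cmt$ is exactly the hypothesis of Theorem~\ref{thm:main_general_l}. The same proposition gives
\[
\mathcal{N}_2(19)=\{7,11,13,21,23,29,33,35,37,39,41,43,47,53,55,63,65,69\}.
\]

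Next I compute $2cmn$ modulo $76$. Since $c\equiv 1\pmod{76}$ and $mn\equiv 1\pmod{19}$, we get $2cmn\equiv 2\pmod{19}$. Since $mn$ is odd, $2cmn\equiv 2\pmod 4$. By CRT, $2cmn\equiv 2\pmod{76}$. This normalisation is what lets condition~\ref{cond:4} be read off as a condition on $x$ alone.

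Now I apply conditions~\ref{cond:1} and~\ref{cond:2} of Theorem~\ref{thm:main_general_l} to force $x\equiv 1\pmod 4$, which leaves $19$ classes modulo $76$. Condition~\ref{cond:3} excludes $x\equiv\pm 2\pmod{19}$, i.e.\ the classes $x\equiv 21$ and $x\equiv 17\pmod{76}$, leaving $17$ classes:
\[
x\equiv 1,5,9,13,25,29,33,37,41,45,49,53,57,61,65,69,73\pmod{76}.
\]
For each of these I compute $x-2cmn\equiv x-2\pmod{76}$ and compare it with $\mathcal{N}_2(19)$. The differences $7,11,23,35,39,43,47,55,63$, coming from $x\equiv 9,13,25,37,41,45,49,57,65$, all lie in $\mathcal{N}_2(19)$, so condition~\ref{cond:4} rules these classes out. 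The remaining differences $75,3,27,31,51,59,67,71$ lie in $\mathcal{N}_1(19)$. These correspond exactly to $x\equiv 1,5,29,33,53,61,69,73\pmod{76}$, which are the exceptional classes in the statement.

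There is no genuine obstacle here; the only risk is bookkeeping. The step needing care is the CRT enumeration and the membership checks against $\mathcal{N}_2(19)$. I will verify these by checking, for each surviving $x$, both $x\equiv 1\pmod 4$ and $x\not\equiv\pm2\pmod{19}$.
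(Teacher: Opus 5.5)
Your proposal is correct and follows the paper's proof: you specialise Theorem~\ref{thm:main_general_l} to $l=19$, use conditions~\ref{cond:1}--\ref{cond:3} to reduce to the same $17$ classes modulo $76$, and then discard the nine classes with $x-2\in\mathcal{N}_2(19)$ via condition~\ref{cond:4}. Your explicit check that $2cmn\equiv 2\pmod{76}$ makes precise a step the paper uses only implicitly, and the phrase ``Theorem~\ref{thm:main7_gen}'s analogue'' in your first step should read Theorem~\ref{thm:main_general_l}.
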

\begin{proof}
For $l=19$, Proposition~\ref{prop:leg19} yields
\begin{align*}
    \mathcal{N}_1(l) &= \{1, 3, 5, 9, 15, 17, 25, 27, 31, 45, 49, 51, 59, 61, 67, 71, 73, 75\}, \\
    \text{and} \quad \mathcal{N}_2(l) &= \{7, 11, 13, 21, 23, 29, 33, 35, 37, 39, 41, 43, 47, 53, 55, 63, 65, 69\}.
\end{align*}
    By~\ref{cond:1},~\ref{cond:2} and~\ref{cond:3} of Theorem~\ref{thm:main_general_l} with $l=19$, any integer solutions $(x,y,z) $ of~\eqref{eq:main_eq} must satisfy $x \equiv 1 \pmod{4}$ and $x \equiv 0, 1, 3, 4, 5, 6, 7, 8, 9, 10, 11, 12, 13, 14, 15, 16, 18 \pmod{19}$.
    
Therefore, by CRT, $x \equiv 1, 5, 9, 13, 25, 29, 33, 37, 41, 45, 49, 53, 57, 61, 65, 69, 73 \pmod{76}$, which shows that $x-2cmn \equiv 3, 7, 11, 23, 27, 31, 35, 39, 43, 47, 51, 55, 59, 63, 67, 71, 75 \pmod{76}$.

Note that the cases $x - 2cmn \equiv 7, 11, 23, 35, 39, 43, 47, 55, 63 \pmod{76}$ are ruled out by Theorem~\ref{thm:main_general_l}\ref{cond:4}. Therefore, $x-2cmn \equiv 3, 27, 31, 51, 59, 67, 71, 75 \pmod{76}$, that is, $ x \equiv 1, 5, 29, 33, 53, 61, 69, 73 \pmod{76}$ are the only possibilities for an integer solution $(x,y,z)$ to~\eqref{eq:main_eq}.
\end{proof}
\begin{example}
To see that~\eqref{eq:main_eq} can indeed possess valid integer solutions for the congruence classes $x \equiv 1, 5, 29, 33, 53, 61, 69, 73 \pmod{76}$, we consider the parameter family $a=c=m=n=q=s=1$ and $d=19$. This choice satisfies all hypotheses of Theorem~\ref{thm:main19_gen}. The constant $b$ simplifies to $b = 2^{19} - 19t^2 = 524288 - 19t^2$, reducing~\eqref{eq:main_eq} to
\begin{equation}\label{eq:reduced_example_76}
    y^2 - xyz + z^2 = x^{19} - 524288 + 19t^2.
\end{equation}
Explicit integer solutions $(x, y, z, t)$ satisfying~\eqref{eq:reduced_example_76} for the following residue classes are as follows:
\begin{enumerate}[label=(\alph*)]
    \item for \textbf{$x \equiv 1 \pmod{76}$}: 
    $(1, 318, 66, 179)$;
    
    \item for \textbf{$x \equiv 5 \pmod{76}$}: 
    $(5, 646300, -2995804, 79)$;
    
    \item for \textbf{$x \equiv 29 \pmod{76}$}: 
    $(29, \allowbreak 2745698438780, \allowbreak -47827361707540, 51)$;
    
    \item for \textbf{$x \equiv 33 \pmod{76}$}: 
    $(33, \allowbreak 35093598933292, \allowbreak -57464604408470, 5)$;
    
    \item for \textbf{$x \equiv 53 \pmod{76}$}: 
    $(53, \allowbreak -13727306672886548, \allowbreak 533798205413628, 1367)$ and 
    $(-23, \allowbreak 846143871766, \allowbreak -5331312857332, 79)$;
    
    \item for \textbf{$x \equiv 61 \pmod{76}$}: 
    $(61, \allowbreak 160538040203076864, \allowbreak 1780252738579908, 835)$ and 
    $(-15, \allowbreak 25334205200, \allowbreak -305298710066, 1)$;
    
    \item for \textbf{$x \equiv 69 \pmod{76}$}: 
    $(69, \allowbreak 617019272657740868, \allowbreak 6906457402209108, 4863)$ and 
    $(-7,\allowbreak -34464922, 164946544, 5)$;
    
    \item for \textbf{$x \equiv 73 \pmod{76}$}: 
    $(73, \allowbreak 11249789791824095722, \allowbreak 153827440733560778, 227)$ and 
    $(-3, 34212, -68648, 45)$.
\end{enumerate}
\end{example}
\section{Solutions coprime to \texorpdfstring{$cmn$}{cmn}}\label{sec:coprime}

We give an alternative formulation of Theorem~\ref{thm:main3_gen} in which the hypothesis that every prime factor of $cm$ is congruent to $r$ modulo $12$ for some $r \in
\mathcal{N}_1(3)$ is dropped. Removing this hypothesis restricts the complete insolvability result. Consequently, instead of excluding all integer solutions, Theorem~\ref{thm:coprime_3} only rules out those satisfying the specified congruence and coprimality restrictions.

\begin{theorem}\label{thm:coprime_3}
Let $a, c \in \ZZ$ such that $a \equiv c \equiv 1 \pmod{12}$. Let $d,m, n $, and $s$ be odd integers such that $mn \equiv 1 \pmod{3}$, and $d \ge 3$ be such that $3\mid d$. Let $s, q \in \ZZ_{>0}$, and suppose that $t \in \ZZ$ is such that every prime factor $p$ of $t$ is congruent to $\pm 1$ modulo $12$. Set $b = a (2cmn)^d - 3 c^s t^{2q}$. Then~\eqref{eq:main_eq} has no integer solutions $(x,y,z)$ for which $x$ satisfies any of the following conditions:
\begin{enumerate}[label=(\alph*)]
    \item $x$ is even;
    \item $x \equiv 3 \pmod 4$;
    \item $x \equiv 1 \pmod 4$ and $\gcd(x, cmn) = 1$.
\end{enumerate}
\end{theorem}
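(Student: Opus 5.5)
The plan is to rerun the proof of Theorem~\ref{thm:main_general_l} with $l=3$, checking at each step where the hypothesis on the prime factors of $cm$ was actually used. Wherever it was used, the coprimality assumption $\gcd(x,cmn)=1$ will take its place. The hypothesis on $t$ is kept: every prime factor of $t$ is $\equiv\pm1\pmod{12}$, i.e.\ lies in $\mathcal{N}_1(3)=\{1,11\}$. So Lemma~\ref{lem:t_odd} still gives that $t$ is odd and $3\nmid t$, and $\left(\frac{3}{p}\right)=1$ for every prime $p\mid t$.

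\emph{Conditions (a) and (b).} The parity and mod~$4$ arguments of Case~1 and the opening of Case~2 in the proof of Theorem~\ref{thm:main_general_l} use only the following facts: $d\ge 3$ is odd, $a\equiv c\equiv 1\pmod 4$, $m,n$ are odd, and $t$ is odd. Nothing about the primes dividing $cm$ enters. So these arguments carry over verbatim. They show that $x$ even, or $x\equiv 3\pmod 4$, is impossible. They also show that in the remaining case $y=2Y$, $z=2Z$ and identity~\eqref{eq:g3c-Codd} holds.

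\emph{Condition (c).} Assume $x\equiv 1\pmod 4$ and $\gcd(x,cmn)=1$, and split according to $x \bmod 3$. Since $2cmn\equiv 2\pmod 3$, the three residues are exactly the three subcases of the earlier proof.
\begin{itemize}
\item If $x\equiv 1\equiv -2cmn\pmod 3$, Subcase~2.2 applies unchanged. Reducing~\eqref{eq:g3c-Codd} modulo $3$ gives $(wn)^2\equiv -2^{d+1}\pmod 3$, which contradicts $\left(\frac{-1}{3}\right)=-1$. This step uses only $a\equiv c\equiv mn\equiv 1\pmod 3$.
\item If $x\equiv 0\pmod 3$, then CRT gives $x\equiv 9\pmod{12}$, so $M:=x-2cmn\equiv 7\pmod{12}$ with $7\in\mathcal{N}_2(3)$. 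As in Subcase~2.3, we obtain a prime $p\mid M$ with $\left(\frac{3}{p}\right)=-1$. Reducing~\eqref{eq:g3c-Codd} modulo $p$ gives $w^2\equiv 3c^{s+1}m^2t^{2q}\pmod p$.
\item If $x\equiv 2\equiv 2cmn\pmod 3$, Subcase~2.1 applies. Its reduction modulo $l=3$ needs only $3\nmid t$ and the congruences on $a,c,mn$. It produces an odd prime $p\mid\delta$, hence $p\mid x-2cmn$, with $\left(\frac{3}{p}\right)=-1$ and $p\ne 3$, and again $w^2\equiv 3c^{s+1}m^2t^{2q}\pmod p$.
\end{itemize}

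The only delicate point, in both of the last two subcases, is the step $p\nmid cmt$. This is exactly where the dropped hypothesis was used before, and so it is the main obstacle. I will handle it as follows.
\begin{itemize}
\item $p\nmid t$, because every prime factor of $t$ satisfies $\left(\frac{3}{\cdot}\right)=1$ while $\left(\frac{3}{p}\right)=-1$.
\item $p\nmid cm$: if $p\mid cm$, then $p\mid 2cmn$. Together with $p\mid x-2cmn$ this gives $p\mid x$, contradicting $\gcd(x,cmn)=1$.
\end{itemize}
Hence $T=c^{(s+1)/2}mt^q$ is invertible modulo $p$, and $(wT^{-1})^2\equiv 3\pmod p$ forces $\left(\frac{3}{p}\right)=1$, a contradiction. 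This completes the case analysis. I would close by noting that the Subcase~2.1 argument requires $3\mid d$, via Lemma~\ref{lem:sum_div}, and that this is assumed in the statement.
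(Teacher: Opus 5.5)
Your proposal is correct and follows the paper's proof essentially step for step. The paper also reruns the argument of Theorem~\ref{thm:main_general_l} with $l=3$ and splits $x\equiv 1\pmod 4$ into the classes $x\equiv 1,5,9\pmod{12}$, which are your classes $x\equiv 1,2,0\pmod 3$. It uses $\gcd(x,cmn)=1$ exactly where you do, namely to get $p\nmid cm$ for the prime $p\mid x-2cmn$ with $\left(\frac{3}{p}\right)=-1$. Your reason for $p\nmid t$, that every prime factor of $t$ has $\left(\frac{3}{\cdot}\right)=1$, is in fact more accurate than the paper's appeal to Lemma~\ref{lem:t_odd}, which only gives that $t$ is odd and $3\nmid t$.
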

\begin{proof}
Proceeding as in the proof of Theorem~\ref{thm:main_general_l} with $l=3$, we have $b\equiv1\pmod{4}$, and any integer solution $(x,y,z)$ of~\eqref{eq:main_eq} must satisfy $x\equiv1\pmod{4}$ and $y \equiv z\equiv 0\pmod{2}$. Setting $y=2Y$, $z=2Z$, and $w = 2cm^{2}Y - x Z$,~\eqref{eq:main_eq} yields
\begin{equation}\label{eq:unified-Codd}
  w^2 - (x^2 - 4c^2m^2n^2)Z^2 \;=\; acm^2\bigl(x^d - (2cmn)^d\bigr) + 3 c^{s+1}m^2t^{2q}.
\end{equation}
Define $M \coloneqq x - 2cmn$. Furthermore, assume that $\gcd(x, cmn) = 1$, which ensures that $\gcd(M, cmn) =1$.

Since $m$ and $n$ are odd and $mn \equiv 1 \pmod 3$, it follows that $mn \equiv 1 \pmod 6$. Combined with $c \equiv 1 \pmod{12}$, this yields $cmn \equiv 1 \pmod 6$, and consequently, $2cmn \equiv 2 \pmod{12}$. Also, since $3\nmid m$, we have $m^2\equiv1\pmod{3}$.

Since $x \equiv 1 \pmod 4$, $x$ is restricted to the following residue classes modulo $12$. 

\begin{enumerate}[label=(\alph*)]
  \item \textit{$x \equiv 1 \pmod{12}$}: Since $cmn \equiv m^2\equiv1\pmod{3}$ and $d$ is odd, reducing~\eqref{eq:unified-Codd} modulo $3$ yields $w^2 \equiv 1-2^d\equiv 1-2\equiv 2 \pmod 3$, which is impossible.
  \item \textit{$x \equiv 5 \pmod{12}$}\label{case:b}: In this case, $M \equiv 3 \pmod{12}$. Consequently, $M=3\delta$, where $\delta\equiv1\pmod{4}$. Reduction of~\eqref{eq:unified-Codd} modulo $3$ yields $3\mid w$. Following Subcase $2.1$ in the proof of Theorem~\ref{thm:main_general_l}, we obtain a prime $p$ such that $p \mid \delta$ (i.e., $p \mid M$) and $\left(\frac{3}{p}\right) = -1$. Since $p \mid M$, it follows that $p \nmid cmn$ (by the gcd condition) and $p \nmid t$ (by Lemma~\ref{lem:t_odd}). Therefore, reduction of~\eqref{eq:unified-Codd} modulo $p$ yields $(w c^{-(s+1)/2} m^{-1} t^{-q})^2  \equiv 3 \pmod p$ (similar to the  Subcase~$2.1$ in the proof of Theorem~\ref{thm:main_general_l}), forcing $\left(\frac{3}{p}\right) = 1$, a contradiction.
  \item \textit{$x \equiv 9 \pmod{12}$}: In this case, $M \equiv 7 \pmod{12}$. Thus, $M$ has a prime factor $p$ such that $p \equiv \pm 5 \pmod{12}$. This implies that $\left(\frac{3}{p}\right) = -1$, which yields a contradiction using the same reasoning as in case~\ref{case:b}.
\end{enumerate}
\end{proof}
\begin{remark}\label{rem:sharpness}
To demonstrate Theorem~\ref{thm:coprime_3} is sharp, we consider a parameter set that satisfies all hypotheses but admits an integer solution $(x,y,z)$ with $x \equiv 1 \pmod 4$ and $\gcd(x, cmn) > 1$. Set $a=m=n=s=q=1$, $c=25$, $d=3$, and $t=11$. Therefore, $b = 2^3 \cdot25^3 - 3\cdot25 \cdot 11^2 = 125000 - 9075 = 115925$. These parameters satisfy all hypotheses of Theorem~\ref{thm:coprime_3}.

Substituting these values into~\eqref{eq:main_eq} yields $x^3 - 25\bigl(y^2 + z^2\bigr) + xyz - 115925 = 0$, which admits the integer solution $(x,y,z)=(65, 506, 230)$ satisfying $x \equiv 1 \pmod 4$ and $\gcd(x, 25) = 5 > 1$.
\end{remark}
\section{A generalised variant of \texorpdfstring{\eqref{eq:main_eq}}{}}\label{sec:asymmetric}
Generalising~\eqref{eq:main_eq}, we now consider the equation
\begin{equation}\label{eq:gen_eq}
  ax^{d} - py^{2} - qz^{2} + xyz - b = 0,
\end{equation}
where the coefficients of $y^2$ and $z^2$, namely $p$ and $q$, are no longer required to be related through a common factor $c$, but are allowed to vary independently subject to congruence conditions modulo $4$. Theorem~\ref{thm:main_general} restricts any possible solution $(x,y,z)$ of~\eqref{eq:gen_eq} to certain congruence classes modulo $4$ for $x$, depending on whether $a\equiv 1$ or $3\pmod{4}$ and on the parity of $d$. In particular, setting $p = cm^2$ and $q = cn^2$ recovers the insolvability result for the symmetric variant~\eqref{eq:main_eq} (see Corollary~\ref{thm:no_sol_general2}). A further specialisation yields analogous results for~\eqref{eq:gen_eq} (set to $p=q=1$) with $b = 2^d a - 3^k$ (see Corollary~\ref{cor:specific_a_3_1}).

\begin{theorem}\label{thm:main_general}
Let $l$ be a prime such that $l \equiv 3 \pmod 4$. Let $a,p, q, u, v \in \ZZ$ be such that $p \equiv q \equiv v \equiv 1 \pmod 4$. Let $r \in \ZZ_{\ge 0}$ and $d \ge 2$ be integers, and let $k$ and $t$ be odd integers with $k$ positive. Set $b = 2^{d}au - l^{k}vt^{2r}$. Then~\eqref{eq:gen_eq} has no integer solutions with $x$ even. Furthermore, the following statements hold:
\begin{enumerate}[label=(\alph*)]
\item If $a \equiv 3 \pmod 4$ and $d$ is even,~\eqref{eq:gen_eq} has no integer solutions.\label{Conition:1}
    \item If $a \equiv 1 \pmod 4$ and $d$ is odd,~\eqref{eq:gen_eq} has no integer solutions with $x \equiv 3 \pmod 4$. \label{Conition:2}
    \item If $a \equiv 3 \pmod 4$ and $d$ is odd,~\eqref{eq:gen_eq} has no integer solutions with $x \equiv 1 \pmod 4$.\label{Conition:3}
\end{enumerate}
\end{theorem}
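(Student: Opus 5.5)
The plan is a pure congruence argument modulo $4$, in the spirit of Cases~1 and~2 of the proof of Theorem~\ref{thm:main_general_l}. First I would pin down $b$ modulo $4$. Since $d \ge 2$, we have $2^d au \equiv 0 \pmod 4$. Since $l \equiv 3 \pmod 4$ and $k$ is odd, $l^k \equiv 3 \pmod 4$. Also $v \equiv 1 \pmod 4$, and $t$ is odd, so $t^{2r} \equiv 1 \pmod 4$ (including the case $r=0$). Hence
\[
b \equiv -3 \equiv 1 \pmod 4 .
\]
It is convenient to rewrite~\eqref{eq:gen_eq} as $py^2 - xyz + qz^2 = ax^d - b$.

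\emph{Step 1: $x$ even.} Write $x = 2\beta$, multiply by $p$, and complete the square with $U := py - \beta z$. This gives
\[
U^2 + (pq - \beta^2) z^2 = p\bigl(a(2\beta)^d - b\bigr).
\]
As $d \ge 2$ and $p \equiv 1 \pmod 4$, the right-hand side is $\equiv -b \equiv 3 \pmod 4$.
\begin{itemize}
\item If $\beta$ is even, then $pq - \beta^2 \equiv 1 \pmod 4$, and we get $U^2 + z^2 \equiv 3 \pmod 4$, which is impossible.
\item If $\beta$ is odd, then $pq - \beta^2 \equiv 0 \pmod 4$, and we get $U^2 \equiv 3 \pmod 4$, again impossible.
\end{itemize}
This settles the first assertion for every $a$, with no parity condition on $d$.

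\emph{Step 2: $x$ odd, $a$ odd.} In each of \ref{Conition:1}--\ref{Conition:3}, $a$ is odd, and by Step~1 we may assume $x$ odd. Reduce modulo $2$: since $p,q,x,a,b$ are all odd,
\[
y^2 + yz + z^2 \equiv a - b \equiv 0 \pmod 2 .
\]
Checking the three nonzero parity patterns of $(y,z)$ shows that $y^2+yz+z^2$ is odd unless $y \equiv z \equiv 0 \pmod 2$. So $y$ and $z$ are both even, and then $py^2 - xyz + qz^2 \equiv 0 \pmod 4$. The equation therefore reduces to
\[
a x^d \equiv b \equiv 1 \pmod 4 .
\]

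\emph{Step 3: the three cases.}
\begin{itemize}
\item If $d$ is even, then $x^d \equiv 1 \pmod 4$, so $a \equiv 1 \pmod 4$. This contradicts $a \equiv 3 \pmod 4$ and gives~\ref{Conition:1}.
\item If $d$ is odd, then $x^d \equiv x \pmod 4$, so $ax \equiv 1 \pmod 4$, that is, $x \equiv a \pmod 4$ (since $a^2 \equiv 1$). For $a \equiv 1$ this excludes $x \equiv 3 \pmod 4$, giving~\ref{Conition:2}. For $a \equiv 3$ it excludes $x \equiv 1 \pmod 4$, giving~\ref{Conition:3}.
\end{itemize}

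I expect no serious obstacle. The only points needing care are:
\begin{itemize}
\item keeping $r = 0$ allowed, so that $t^{2r} \equiv 1$ still holds;
\item using $d \ge 2$ rather than $d \ge 3$, which is enough to kill $2^d$ and $(2\beta)^d$ modulo $4$;
\item the parity lemma that $y^2 + yz + z^2$ even forces $y, z$ even, which I would state and verify explicitly.
\end{itemize}
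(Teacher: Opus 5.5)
Your proof is correct, and for odd $x$ it is exactly the paper's argument: reduction modulo $2$ forces $y \equiv z \equiv 0 \pmod 2$, which leaves $ax^d \equiv b \equiv 1 \pmod 4$. For even $x$ you complete the square as in Case~1 of Theorem~\ref{thm:main_general_l}, whereas the paper argues more directly: $y^2+z^2\equiv 1 \pmod 2$ forces $y,z$ to have opposite parity, and reducing modulo $4$ then gives $0\equiv -q-b\equiv -2 \pmod 4$. Both arguments are valid.
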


\begin{proof}
We begin by evaluating the residue of $b$ modulo $4$. Because $k$ and $l$ are odd, we have $l^k \equiv l \pmod 4$. Since $t$ is odd, $t^{2r} \equiv 1 \pmod 4$. As $d \ge 2$ and $v \equiv 1 \pmod 4$, we have $b \equiv -l \equiv 1 \pmod 4$.

Suppose, for a contradiction, that there exists a solution $(x,y,z)$ with $x$ even. Since $d \ge 2$, we have $x^d \equiv 0 \pmod 4$. As $p, q$, and $b$ are odd, reducing~\eqref{eq:gen_eq} modulo $2$ yields
\[
 y^2 + z^2 \equiv 1 \pmod 2.
\]
Hence, $y$ and $z$ have opposite parity. Since $p \equiv q \equiv 1 \pmod 4$, we may assume without loss of generality that $y$ is even and $z$ is odd. Consequently, $y^2 \equiv 0 \pmod 4$, $z^2 \equiv 1 \pmod 4$, and $xyz \equiv 0 \pmod 4$. Since $q \equiv 1 \pmod 4$ and $x^d \equiv 0 \pmod 4$, reducing~\eqref{eq:gen_eq} modulo $4$ yields
\[
 0 \equiv -q - b \equiv -2 \pmod 4,
\]
a contradiction. Therefore, $x$ cannot be even.

\medskip
\noindent\textbf{Case~1}: \textit{$x$ is odd, with $a \equiv 3 \pmod 4$ and $d$ even.} Suppose, for a contradiction, that there exists a solution with $x$ odd. Since $a \equiv 3 \pmod 4$ and $b \equiv 1 \pmod{4}$, reducing~\eqref{eq:gen_eq} modulo $2$ yields 
\[
  a - (y^2 + z^2) + yz - 1 \equiv 0 \pmod 2 \implies y^2 + z^2 - yz \equiv 0 \pmod 2.
\]
Note that $y^2 + z^2 - yz\equiv 0 \pmod 2$ if and only if both $y$ and $z$ are even. Also, as $x$ is odd and $d$ is even, we have $x^d \equiv 1 \pmod 4$. Reducing~\eqref{eq:gen_eq} modulo $4$ yields 
\[
  ax^d  - b \equiv 0 \pmod 4 \implies a \equiv 1 \pmod 4,
\]
a contradiction. This proves~\ref{Conition:1}.

\medskip
\noindent\textbf{Case~2}: \textit{$x,a$  and $d$ are odd.}
Proceeding similarly as in Case~$1$, we deduce that both $y$ and $z$ are even. Also, as $d$ and $x$ are odd, we have $x^d \equiv x \pmod 4$. Reducing~\eqref{eq:gen_eq} modulo $4$ yields 
\[
 ax^d - b \equiv 0 \pmod 4 \implies ax \equiv 1 \pmod 4.
\]
Substituting either $a \equiv 1 \pmod 4$ with $x \equiv 3 \pmod 4$, or $a \equiv 3 \pmod 4$ with $x \equiv 1 \pmod 4$, yields $3 \equiv 1 \pmod 4$, which is a contradiction.  This proves~\ref{Conition:2} and~\ref{Conition:3}. 
\end{proof}
\begin{corollary}\label{thm:no_sol_general2}
Let $l$ be a prime with $l \equiv 3 \pmod 4$, and let $a,c \in \ZZ$ with $c \equiv 1 \pmod 4$. Let $r,s \in \ZZ_{\ge 0}$ and $d \ge 2$ be integers. Suppose $m, n, k$, and $t$ are odd integers with $k$ positive. Set $b = a(2cmn)^{d} - l^{k}c^{s}t^{2r}$. Then,~\eqref{eq:main_eq} has no integer solutions with $x$ even. Furthermore, the remaining conclusions of Theorem~\ref{thm:main_general} apply to~\eqref{eq:main_eq} as well.
\end{corollary}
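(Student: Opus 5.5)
The plan is to derive this directly from Theorem~\ref{thm:main_general} by specialising its parameters. Equation~\eqref{eq:main_eq} is exactly~\eqref{eq:gen_eq} with $p = cm^2$ and $q = cn^2$. The constant is $b = a(2cmn)^d - l^k c^s t^{2r} = 2^d a u - l^k v t^{2r}$ once we set
\[
u \coloneqq (cmn)^d, \qquad v \coloneqq c^s .
\]
To avoid clashing with the exponent $q$ of~\eqref{eq:main_eq}, I will write the $z^2$-coefficient of~\eqref{eq:gen_eq} as $q' = cn^2$.

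The remaining work is to check the hypotheses of Theorem~\ref{thm:main_general}.
\begin{itemize}
\item Since $m$ and $n$ are odd, $m^2 \equiv n^2 \equiv 1 \pmod 4$. With $c \equiv 1 \pmod 4$ this gives $p = cm^2 \equiv 1$ and $q' = cn^2 \equiv 1 \pmod 4$.
\item Likewise $v = c^s \equiv 1 \pmod 4$ for every $s \ge 0$, including $s = 0$.
\item The quantities $a, u \in \ZZ$, $r \in \ZZ_{\ge 0}$, $d \ge 2$, and odd $k>0$, $t$ are exactly as required.
\end{itemize}
Theorem~\ref{thm:main_general} then applies verbatim. It yields that there are no solutions with $x$ even, together with conclusions~\ref{Conition:1}--\ref{Conition:3} depending on $a \bmod 4$ and the parity of $d$.

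The main point of care is whether the proof of Theorem~\ref{thm:main_general} used anything about $u$ beyond its being an integer. It did not: the term $2^d a u$ vanishes modulo $4$ because $d \ge 2$, so $b \equiv -l \equiv 1 \pmod 4$ regardless of $u$. I would also note that the argument never needs $p$ and $q'$ to be squarefree, only that they are $\equiv 1 \pmod 4$. So no further obstacle arises, and the corollary is a direct substitution.
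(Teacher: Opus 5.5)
Your proposal is correct and matches the paper's proof: set $(p,q)=(cm^2,cn^2)$, $u=(cmn)^d$ and $v=c^s$, check the congruences modulo $4$, and invoke Theorem~\ref{thm:main_general} directly. The renaming to $q'$ is harmless but not needed. In this corollary the exponent is called $r$, and \eqref{eq:main_eq} itself contains no $q$, so no clash arises.
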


\begin{proof}
Let $(p,q) \coloneqq (cm^2,cn^2)$. Then $p \equiv q \equiv 1 \pmod 4$, as $c \equiv m^2 \equiv n^2 \equiv 1 \pmod 4$.

Define $u = (c m n)^d$ and $v = c^s$. Therefore, we can write $b = a(2cmn)^{d} - l^{k}c^{s}t^{2r} = 2^{d}au - l^{k}vt^{2r}$.
The proof now follows immediately from Theorem~\ref{thm:main_general}. 
\end{proof}

\begin{corollary}\label{cor:specific_a_3_1}
 Let $a, d \in \ZZ$ with $d \ge 2$, and let $k$ be a positive odd integer. Set $b = 2^{d}a - 3^{k}$. Then, the Diophantine equation
\begin{equation}\label{eq:spec_a_3_1}
  ax^{d} - y^{2} - z^{2} + xyz - b= 0
\end{equation}
has no integer solutions with $x$ even. Furthermore, the remaining conclusions of Theorem~\ref{thm:main_general} apply to~\eqref{eq:spec_a_3_1} as well.
\end{corollary}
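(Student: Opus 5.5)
This corollary is a direct specialisation of Theorem~\ref{thm:main_general}, so the plan is to match \eqref{eq:spec_a_3_1} with \eqref{eq:gen_eq} and check the hypotheses. I will take $l=3$, which is a prime with $3 \equiv 3 \pmod 4$. Next I set the coefficients of $y^2$ and $z^2$ to $p=q=1$, so that $p \equiv q \equiv 1 \pmod 4$ holds trivially. For the constant term I choose $u=1$, $v=1$, $t=1$ and $r=0$, keeping $k$ as given.

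With these choices, $b = 2^{d}au - l^{k}vt^{2r}$ becomes exactly $2^{d}a - 3^{k}$. Then \eqref{eq:gen_eq} reads $ax^{d} - y^{2} - z^{2} + xyz - b = 0$, which is precisely \eqref{eq:spec_a_3_1}.

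It remains to check the parameter hypotheses of Theorem~\ref{thm:main_general}:
\begin{itemize}
\item $v=1 \equiv 1 \pmod 4$;
\item $r=0 \in \ZZ_{\ge 0}$;
\item $d \ge 2$ is given;
\item $t=1$ is odd;
\item $k$ is odd and positive by assumption.
\end{itemize}
The integer $a$ is arbitrary in that theorem, and its residue modulo $4$ only selects which of \ref{Conition:1}--\ref{Conition:3} applies. With the hypotheses verified, Theorem~\ref{thm:main_general} gives that \eqref{eq:spec_a_3_1} has no integer solutions with $x$ even. Conditions \ref{Conition:1}, \ref{Conition:2} and \ref{Conition:3} then transfer verbatim according to $a \bmod 4$ and the parity of $d$.

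There is no real obstacle here. The only point needing care is that $r=0$ and $t=1$ are admissible, since the theorem allows $r \ge 0$ and any odd $t$. One should also note that for even $a$ the statement reduces to just the ``$x$ even'' conclusion, because none of the three conditions is triggered.
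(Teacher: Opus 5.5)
Your proof is correct and is essentially the paper's argument. The paper specialises Corollary~\ref{thm:no_sol_general2} with $l=3$ and $c=m=n=t=1$, and that corollary is Theorem~\ref{thm:main_general} with $(p,q,u,v)=(cm^2,cn^2,(cmn)^d,c^s)$, which reduces to exactly your choice $p=q=u=v=1$, $t=1$; you simply skip the intermediate corollary, and your remark that even $a$ leaves only the ``$x$ even'' conclusion is accurate.
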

\begin{proof}
This proof is immediate from Corollary~\ref{thm:no_sol_general2} by taking $l = 3$ and $c = m = n = t = 1$. 
\end{proof}
\begin{remark}
Corollary~\ref{cor:specific_a_3_1} (with $k=1$) can be compared with Corollary~\ref{cor:3cs} (with $c=1$). In the latter, $d$ is an odd multiple of $3$, whereas $d$ can be taken to be even in Corollary~\ref{cor:specific_a_3_1}. 
\end{remark}

\section{Non-existence of positive integer solutions}\label{sec:positive solution}
In this section, rather than ruling out solutions within specific residue classes, we establish bounds on the size of any potential positive solution to~\eqref{eq:main_eq} for any positive integer $l$. In particular,~\eqref{eq:main_eq} admits only finitely many positive solutions satisfying $x \le 2cmn$.

\begin{theorem}
Let $a, c, d,l, m, n$ be positive integers such that $b \coloneqq a(2cmn)^d - l c>0$. Then~\eqref{eq:main_eq} has no positive integer solution $(x,y,z)$ such that $x\le \big(\frac{b}{a}\big)^{1/d}$.
\end{theorem}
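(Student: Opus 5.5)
Rearrange \eqref{eq:main_eq} as
\[
ax^{d} - b = c\bigl(m^{2}y^{2}+n^{2}z^{2}\bigr) - xyz .
\]
The hypothesis $x \le (b/a)^{1/d}$ with $a>0$ gives $ax^d \le b$, so the left-hand side is $\le 0$. The plan is to show that, for positive $x,y,z$ with $x$ in this range, the right-hand side is strictly positive. That yields a contradiction.

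\textbf{Step 1: the bound on $x$.} Since $b = a(2cmn)^d - lc < a(2cmn)^d$ (because $l,c>0$), we get $x \le (b/a)^{1/d} < 2cmn$, hence $x \le 2cmn - 1$.

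\textbf{Step 2: AM--GM on the quadratic part.} By AM--GM,
\[
c(m^2y^2+n^2z^2) \ge 2cmn\,yz .
\]
Therefore
\[
c(m^2y^2+n^2z^2) - xyz \ge (2cmn - x)\,yz \ge yz \ge 1 > 0
\]
for positive $y,z$.

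\textbf{Step 3: conclusion.} Combining the steps,
\[
0 \ge ax^d - b = c(m^2y^2+n^2z^2) - xyz > 0,
\]
which is a contradiction.

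The only point needing care is the strict inequality $x<2cmn$. It comes from $lc>0$, which makes the bound strict. One could also phrase it as: if $x = 2cmn$ then $ax^d > b$, contradicting the hypothesis. Everything else is routine.

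The remark about finiteness for $x\le 2cmn$ presumably follows along the same lines. For the remaining $x$ with $(b/a)^{1/d}<x\le 2cmn$, the equation gives $c(my-nz)^2 + (2cmn-x)yz = ax^d-b$, which is bounded. That bounds $y,z$ when $x<2cmn$. This is not needed for the stated theorem, though.
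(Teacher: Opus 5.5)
Your proof is correct and follows the paper's argument. Both combine $c(m^2y^2+n^2z^2)\ge 2cmn\,yz$ (i.e.\ $(my-nz)^2\ge 0$) with $ax^d-b\le 0$ and the strict inequality $(b/a)^{1/d}<2cmn$ coming from $lc>0$; the only difference is order, since the paper first derives $x\ge 2cmn$ and then contradicts $x\le (b/a)^{1/d}$, while you establish $x<2cmn$ first and show the right-hand side is positive.
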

\begin{proof}
Assume that~\eqref{eq:main_eq} has a positive integer solution $(x,y,z)$ such that $x\le \big(\frac{b}{a}\big)^{1/d}$, that is, $$c(m^2y^2 + n^2z^2) - xyz = ax^d -b.$$
Since $a, b > 0$, the quantity $R \coloneqq (b/a)^{1/d}$ is well-defined and positive. For $x \in (0, R]$, we have $x^d \le \frac{b}{a}$, that is, $ax^d - b\le 0$. Since $(my - nz)^2 \ge 0$ and $c> 0$, we obtain
$$2cmn yz \le c(m^2y^2 + n^2z^2) = xyz + ax^d -b \le xyz.$$
Since $yz > 0$, we have $2cmn \le x \le R$. Thus $2cmn \le R$, that is, $lc/a \leq 0$, a contradiction.
\end{proof}
\begin{theorem}\label{positive solutions bound}
Let $a, c, d, l, m, n$ be positive integers such that $b \coloneqq a(2cmn)^{d} - lc > 0$.
\begin{enumerate}[label=(\alph*)]
    \item If either $l = 1$ and $\gcd(m,n) > 1$, or $l \ge 2$ is squarefree, then~\eqref{eq:main_eq} has no integer solution $(x,y,z)$ with $x = 2cmn$.
    \item Every positive integer solution $(x,y,z)$ of~\eqref{eq:main_eq} with $x < 2cmn$ satisfies
    \[
    x > (b/a)^{1/d}, \qquad 2cmn - \frac{lc}{1+ad} \le x < 2cmn,
    \]
    \[
    yz \le \frac{lc}{2cmn - x} - ad, \qquad \text{and} \qquad |my - nz| \le \big\lfloor \sqrt{l} \big\rfloor.
    \]
    In particular, the number of such solutions is finite.
\end{enumerate}
\end{theorem}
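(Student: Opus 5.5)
The whole argument rests on one algebraic identity. Completing the square in the quadratic part gives
\[
c\bigl(m^2y^2+n^2z^2\bigr) - xyz \;=\; c(my-nz)^2 + (2cmn - x)\,yz .
\]
Substituting $b = a(2cmn)^d - lc$, equation~\eqref{eq:main_eq} becomes
\begin{equation}
c(my-nz)^2 + (2cmn-x)\,yz + a\bigl((2cmn)^d - x^d\bigr) \;=\; lc. \tag{$\ast$}
\end{equation}
Both parts of the theorem will be read off from $(\ast)$.

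For part (a), I put $x = 2cmn$ into $(\ast)$. The middle terms vanish, and after dividing by $c>0$ I get $(my-nz)^2 = l$.
\begin{itemize}
\item If $l\ge 2$ is squarefree, then $l$ is not a perfect square, so no integers $y,z$ work.
\item If $l=1$, then $my - nz = \pm 1$. But $\gcd(m,n)$ divides $my-nz$, which is impossible when $\gcd(m,n)>1$.
\end{itemize}
This case needs no positivity of $y,z$, which is consistent with the statement allowing arbitrary integer solutions.

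For part (b), let $(x,y,z)$ be a positive solution with $x<2cmn$. Then $2cmn - x\ge 1$ and $yz\ge 1$. I will factor
\[
(2cmn)^d - x^d = (2cmn - x)\,S, \qquad S=\sum_{i=0}^{d-1}(2cmn)^{d-1-i}x^i .
\]
Since $x\ge 1$ and $2cmn > x$, each of the $d$ summands of $S$ is at least $1$, so $S\ge d$. Now $(\ast)$ gives
\[
(2cmn - x)\bigl(yz + aS\bigr) + c(my-nz)^2 = lc .
\]
Every term on the left is nonnegative. I then extract the four claims in order.
\begin{enumerate}
\item The term $(2cmn-x)yz$ is strictly positive. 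Going back to the unrearranged form $c(my-nz)^2 + (2cmn-x)yz = ax^d - b$, this forces $ax^d - b > 0$, hence $x > (b/a)^{1/d}$.
\item Dropping the square term and using $S\ge d$ gives $(2cmn-x)(yz+ad)\le lc$. With $yz\ge 1$ this yields $2cmn - x \le lc/(1+ad)$.
\item The same inequality, divided by $2cmn-x$, gives $yz \le lc/(2cmn-x) - ad$.
\item Dropping the positive product term gives $c(my-nz)^2 < lc$, so $(my-nz)^2 < l$, and therefore $|my-nz|\le\lfloor\sqrt l\rfloor$.
\end{enumerate}
Finiteness then follows: $x$ lies in a bounded interval, and for each such $x$ the product $yz$ is bounded. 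Since $y,z$ are positive, each of $y$ and $z$ is bounded, leaving finitely many triples.

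I do not expect a real obstacle; the only point needing care is the bound $S\ge d$. It uses $x\ge 1$, which is where positivity of the solution enters. The identity for the quadratic part and the sign bookkeeping in $(\ast)$ (with $x<2cmn$, $yz>0$) should be displayed explicitly so the nonnegativity of each term is transparent.
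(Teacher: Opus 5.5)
Your proposal is correct and follows the paper's proof essentially step for step: the same rearranged identity $(\ast)$, the reduction to $(my-nz)^2=l$ for part (a), and the same term-by-term extraction of the four bounds in part (b). The only differences are minor: you obtain $a\bigl((2cmn)^d-x^d\bigr)\ge ad(2cmn-x)$ from the factorization with $S\ge d$ instead of the mean value theorem, and your observation that $\gcd(m,n)$ divides $my-nz$ spells out a step the paper states without justification for $l=1$.
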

\begin{proof}
Let $(x,y,z)$ be an integer solution where $x \le 2cmn$. Rearranging~\eqref{eq:main_eq} yields
\begin{equation}\label{eq:budget0}
(2cmn - x) yz + c(my-nz)^2 + a\bigl((2cmn)^d - x^d\bigr) = l c.
\end{equation}

\smallskip
\noindent{(a)} \,\textit{$x = 2cmn$.}\,
In this case,~\eqref{eq:budget0} simplifies to $c(my-nz)^2 = l c $, which implies $(my-nz)^2 = l$. Since $l\ge 2$ is squarefree, it cannot be a perfect square. Also, $l=1$ forces $\gcd(m,n)=1$, a contradiction. Thus, there are no integer solutions $(x,y,z)$ satisfying $x = 2cmn$.

\smallskip
\noindent{(b)} \,\textit{$x < 2cmn$.}\,
Let $(x,y,z)$ be a positive integer solution. Define $k = 2cmn - x$. By hypothesis, $k \ge 1$, so~\eqref{eq:budget0} becomes
\[
k yz + c(my-nz)^2 + a\bigl((2cmn)^d - x^d\bigr) = l c.
\]
Because $k ,y,z \ge 1$, we have $k yz + c(my-nz)^2 > 0$. Therefore,~\eqref{eq:budget0} yields $x > (b/a)^{1/d}$.

Since $a,d \ge 1$, the Lagrange's mean value theorem guarantees some $\gamma \in (x, 2cmn)$ such that
\[
a\bigl((2cmn)^d - x^d\bigr) = ad\gamma^{d-1}(2cmn-x) = kad\gamma^{d-1}.
\]
Because $\gamma > x \ge 1$, we have $\gamma^{d-1} \ge x^{d-1} \ge 1$, which yields $a\bigl((2cmn)^d - x^d\bigr) \ge kad$. Substituting this into~\eqref{eq:budget0}, we obtain
\[
l c = k yz + c(my-nz)^2 + a\bigl((2cmn)^d - x^d\bigr) \ge k yz + c(my-nz)^2 + kad.
\]
Since $a,c,d,k,y, z \ge 1$, every term on the right-hand side is nonnegative and bounded above by $l c$. In particular, we have $c(my-nz)^2 \le l c$, which yields $|my-nz| \le \big\lfloor \sqrt{l} \big\rfloor$. 

Similarly, we have $k(1+ad) \le k(yz+ad) \le l c$, which proves
$k \le \frac{l c}{1+ad}$. Since $x = 2cmn - k$, we have $2cmn - \frac{l c}{1+ad} \le x < 2cmn$.

Finally, $k(yz+ad) \le l c$ yields $yz \le \frac{l c}{k} - ad$, that is, $yz \le \frac{l c}{2cmn - x} - ad$.

For each of the finitely many integers $x$ in $2cmn - \frac{l c}{1+ad} \le x < 2cmn$, the condition $1\le yz \le \frac{l c}{2cmn - x} - ad$ restricts $y$ and $z$ to finitely many pairs. Consequently,~\eqref{eq:main_eq} has only finitely many positive integer solutions $(x,y,z)$ satisfying $x < 2cmn$.
\end{proof}
\begin{remark}
The bounds established in Theorem~\ref{positive solutions bound} are sharp. For instance, setting $l=2$, $a=c=d=1$, and $m=n=2$, ~\eqref{eq:main_eq} admits the solution $(x,y,z) = (7,1,1)$. This solution simultaneously attains the bounds for $x$ and $yz$, since $x = 2cmn - \frac{lc}{1+ad} = 7$ and $yz = \frac{lc}{2cmn-x} - ad = 1$. This optimal behaviour  is similarly exhibited by the parameter choice $l=3$, $a=2$, and $c=d=m=n=1$, where~\eqref{eq:main_eq} admits the solution $(x,y,z) = (1,1,1)$.

Furthermore, the difference bound is also realisable. An application of Theorem~\ref{positive solutions bound} with the choice of parameters $l=3$, $a=c=d=m=1$, and $n=2$, yields the solutions $(x,y,z) = (3,1,1)$ and $(3,2,1)$ for~\eqref{eq:main_eq}. For the specific solution $(3,1,1)$, the difference bound is attained, since $|my-nz| = |y-2z| = 1 = \lfloor\sqrt{3}\rfloor$. Under these parameters, we also observe that $x = \big\lceil 2cmn - \frac{lc}{1+ad} \big\rceil =3$ and the product bound holds as $yz \le \frac{lc}{2cmn-x} - ad = 2$.
\end{remark}

\section{Hyperelliptic curves}\label{sec:hyperelliptic}
In this section, we construct a family of (hyper)elliptic curves corresponding to~\eqref{eq:main_eq}, with  $a,c,d,m,n,q,s,t,b$ satisfying the hypotheses of Theorem~\ref{thm:main3_gen}, and study their rational torsion points using the insolvability of the associated Diophantine equation. The main tool is Grant's generalisation of the Nagell--Lutz theorem to hyperelliptic curves (Theorem~\ref{thm:grant}), which connects rational torsion points (defined later) to integer points on the curve. \subsection{Preliminaries on hyperelliptic curves and Grant's theorem}\label{subsec:hyper-preliminaries}
Grant's theorem \cite[Theorem~3]{Grant} is the genus-$g$ analogue of the classical Nagell--Lutz theorem \cite[Theorem~2.5, Remark~2.6]{SilvermanTate2015}, which was proved independently by T.\ Nagell \cite{Nagell1935} and \'E.\ Lutz \cite{Lutz1937}. To state the Nagell--Lutz theorem, we start with the definition of an elliptic curve (or, more generally, a hyperelliptic curve).

Throughout, $C$ denotes the smooth (i.e., nonsingular) projective curve over $\Q$ attached to an affine model $y^2=f(x)$, where $f\in\ZZ[x]$ is a monic, squarefree (see Definition \ref{def:discriminant}) polynomial of degree $\deg f\ge 3$, and its \textit{genus} is the integer $g=\lfloor \frac{\deg f-1}{2}\rfloor$. We call $C$ \textit{elliptic} over $\Q$ if $g=1$, and \textit{hyperelliptic} over $\Q$ if $g\ge 2$.

From now on, we assume that $\deg f$ is odd (i.e., $\deg f =2g+1$) to ensure that the smooth projective model $C$ contains exactly one point at infinity, denoted $\infty$. This establishes the setting for Grant's theorem below.

For a field $K \supseteq \Q$, the set of
\textit{$K$-rational points} on $C$, denoted $C(K)$, consists of the point at infinity together with all the solutions to $y^2=f(x)$ with coordinates in $K$, that is,
\begin{align}\label{C(K) defn}
   C(K)=\bigl\{(\alpha,\beta)\in K\times K : \beta^2=f(\alpha)\bigr\}\cup\{\infty\}.
\end{align}

An \textit{elliptic curve} $E$ over $\Q$ can be given by a Weierstrass equation of the form
\begin{align}\label{elliptic normal form}
    E\colon\; y^2=f(x)=x^3+Ax^2+Bx+C,\qquad A,B,C\in\ZZ,    
\end{align}
where $f$ is squarefree, so that $E$ is nonsingular. The rational points on $E$, denoted $E(\Q)$ (as defined in~\eqref{C(K) defn}), form a group under the chord–tangent group law, with $\infty$ (often written as $\mathcal{O}$) as its identity element.

For an elliptic curve $E$ (as in~\eqref{elliptic normal form}), the \textit{Nagell--Lutz theorem} says that every nontrivial rational torsion point $P=(x,y) \in E(\Q)$ has integer coordinates, that is, $x,y\in\ZZ$, and if $y\ne0$, then $y^2$ divides $\disc(f)$, where
$\disc(f)=18ABC-4A^3C+A^2B^2-4B^3-27C^2$. Since $\disc(f)\neq 0$ (by Remark~\ref{discriminant nonzero}), the condition $y^2 \mid \disc(f)$ ensures that there are only finitely many choices for $y$. Consequently, while the group $E(\Q)$ itself may be infinite, its torsion subgroup $E(\Q)_{\mathrm{tors}}$ is always finite. Mordell
\cite{Mordell1922} established that the group $E(\Q)$ is finitely generated. In sharp contrast, Faltings \cite{Faltings1983} proved the Mordell conjecture, establishing that for any smooth, projective, and absolutely irreducible curve $C$ over $\Q$ of genus $g \ge 2$, the set of rational points $C(\Q)$ is finite.

Unlike $E(\Q)$, the set $C(\Q)$ carries no natural group structure for a curve $C$ of genus $g \ge 2$. This obstruction is overcome by embedding $C$ into its Jacobian $J(C)$. To define the Jacobian, we first define divisors.

A (Weil) \textit{divisor} is a finite formal sum $\sum_i n_i P_i$, where $n_i \in \ZZ$ and $P_i \in C(\Qbar)$; equivalently, it is an element of the free abelian group on the set $C(\Qbar)$, and its \textit{degree} is $\sum_i n_i$.

To define principal divisors, we attach an integer $\ord_P(h)$, namely the \textit{order of $h$ at $P$}, to each point $P\in C(\Qbar)$ and each nonzero rational function $h$ on $C$. Since $C$ is smooth at $P$, there is a rational function $t$ vanishing to order exactly $1$ at $P$, called a \textit{local parameter} (or \textit{uniformizer}) at $P$. Any nonzero rational function $h$ can then be written uniquely as $h = t^{m} w$ with $m\in\ZZ$ and $w$ defined and nonzero at $P$, and we set $\ord_P(h)=m$. If $\ord_P(h)>0$, we say $h$ has a \textit{zero} at $P$ of that order; if $\ord_P(h)<0$, we say $h$ has a \textit{pole} at $P$ of order $-\ord_P(h)$.

A divisor is \textit{principal} if it is the divisor of a nonzero rational function
$h$ on $C$, namely $\ddiv(h)=\sum_P \ord_P(h)\,P$. Since a nonzero rational function
on $C$ has as many zeros as poles, counted with multiplicity, every principal divisor
has degree $0$; consequently the principal divisors form a subgroup
$\operatorname{Prin}(C)$ of the group $\operatorname{Div}^0(C)$ of degree-zero
divisors. Two divisors are \textit{linearly equivalent} if they differ by a principal
divisor. The \textit{Jacobian} of $C$ is the quotient group
\[
  J = J(C) := \operatorname{Div}^0(C)/\operatorname{Prin}(C),
\]
with addition induced from addition of divisors. For a degree-zero divisor
$D \in \operatorname{Div}^0(C)$, we write $[D] \in J$ for its linear-equivalence class;
thus $[D_1] = [D_2]$ if and only if $D_1 - D_2$ is principal, and the class $[0]$ of
the zero divisor is the identity element of $J$.

The absolute Galois group $G_{\Q}:=\operatorname{Gal}(\Qbar/\Q)$
acts on $C(\Qbar)$ coordinatewise, fixing $\infty$, and hence acts on divisors and on divisor classes. The group of \textit{rational points} $J(\Q)$ is the subgroup of degree-zero divisor classes (with $\Qbar$-coordinates) fixed by this action. Equivalently, $J(\Q)$ consists of divisor classes that admit a representative divisor whose points are permuted among themselves by every field automorphism of $\Qbar$ fixing $\Q$. This group $J(\Q)$ is called the Mordell--Weil group of $C$ (or of $J$) over $\Q$. By the Mordell--Weil theorem \cite{Weil1929}, $J(\Q)$ is a finitely generated abelian group, so its torsion subgroup $J(\Q)_{\mathrm{tors}}$ is finite. Equipped with this necessary terminology, we are now in a position to state Grant's theorem.

\begin{theorem}[{\cite[Theorem~3]{Grant}}] \label{thm:grant}
Let $C$ be the nonsingular projective curve of genus $g \ge 1$ given by $y^2 = f(x) = x^{2g+1} + b_1 x^{2g} + \dots + b_{2g} x + b_{2g+1}$, where $b_i \in \ZZ$ and $f$ is squarefree. Let
$\phi \colon C(\Q) \rightarrow J(\Q)$ be the Abel--Jacobi map, defined by $\phi(P) = [P - \infty]$. If $P = (x,y) \in C(\Q) \setminus \{\infty\}$ and
$\phi(P) \in J(\Q)_{\mathrm{tors}}$, then $x, y \in \ZZ$ and either $y = 0$ or $y^2$ divides $\disc(f)$. 
\end{theorem}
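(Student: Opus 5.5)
Since Theorem~\ref{thm:grant} is quoted from \cite{Grant}, the paper uses it as a black box. If I had to prove it myself, I would adapt the two halves of the classical proof of the Nagell--Lutz theorem: first integrality, then divisibility.

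\textbf{Integrality.} I would use the formal group of the Jacobian $J$ over $\ZZ_p$, separately for each prime $p$. Suppose $P=(x,y)$ satisfies $\ord_p(x)<0$. Since $f$ is monic of degree $2g+1$, we get $\ord_p(x)=-2k$ and $\ord_p(y)=-(2g+1)k$ for some $k\ge 1$. Take the local parameter at $\infty$ to be $T=x^g/y$, which has $\ord_p(T)=k>0$. Then $\phi(P)=[P-\infty]$ lies in the kernel of reduction of the N\'eron model, and in fact in the subgroup $\hat J(p^k\ZZ_p)$ of the formal group. The coordinates there are power series in $T$ with $\ZZ_p$-coefficients, coming from the expansions of $x^{i}/y$ for $i<g$.

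The standard formal-group lemma states: if $z\in\hat J(p^k\ZZ_p)$ is torsion and nonzero, then $k\le \ord_p(n)/(p-1)$ for the relevant $p$-power order $n$. For $k\ge1$ this already excludes torsion when $p\ge3$. The real difficulty is $p=2$. Here I would follow Grant and use the weighted structure: $x$ has weight $2$ and $y$ has weight $2g+1$. This should show that the formal logarithm converges well enough that $\ord_2(T)=k\ge 1$, together with the parity coming from odd weight, forces $\phi(P)$ to be torsion-free. Concluding, $x\in\ZZ_p$ for every $p$, so $x\in\ZZ$, and then $y\in\ZZ$ because $y^2=f(x)$ with $f$ monic.

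\textbf{Divisibility.} Assume $y\ne 0$. Then $2\phi(P)=[2P-2\infty]$ is also a torsion point. I would represent it in Mumford form $\bigl(u(X),v(X)\bigr)$, where $u=(X-x)^2$ and $v$ is the linear polynomial satisfying $v(x)=y$ and $v'(x)=f'(x)/(2y)$. Mumford's reduction then produces the reduced divisor. Torsion points of $J(\Q)$ should have Mumford coordinates $u,v$ with integral coefficients. I would obtain this by applying the formal-group argument above to each point in the support, using that the support is Galois-stable, so one works over the ring of integers of a number field. Requiring the coefficients to be integral should give $2y\mid f'(x)$, at least up to the power of $2$ handled separately.

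Finally, use the identity $\disc(f)=A(X)f(X)+B(X)f'(X)$ with $A,B\in\ZZ[X]$, coming from the resultant of $f$ and $f'$. I would sharpen it to a form such as $\disc(f)\in\bigl(f,\,f'^2\bigr)$ and evaluate at $X=x$. Since $y^2=f(x)$ and $y^2\mid f'(x)^2$, this gives $y^2\mid\disc(f)$.

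The main obstacle is the prime $2$ in both halves. In the integrality step one must rule out $2$-power torsion in the first layer of the formal group. In the divisibility step one must avoid losing a factor of $4$ when passing from $2y\mid f'(x)$ to $y^2\mid\disc(f)$. For both, my first attempt would be to use the odd degree $2g+1$ and the weight grading, as Grant does.
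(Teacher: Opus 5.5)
The paper quotes this theorem from Grant without proof, so your argument has to stand on its own. The divisibility half has a genuine gap.

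\textbf{The gap: obtaining $y \mid f'(x)$.} To get $y^2\mid\disc(f)$ you need exactly $y\mid f'(x)$. Your sharpened identity is true, and there is no loss at $2$. Over $R=\ZZ[X]/(f)$ one has $f(T)=(T-X)g(T)$ with $g\in R[T]$ monic. Hence $\disc(f)=\disc_T(g)\,g(X)^2=\disc_T(g)\,f'(X)^2$ in $R$, that is, $\disc(f)\equiv D(X)f'(X)^2 \pmod{f}$ with $D\in\ZZ[X]$.

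You propose to get $2y\mid f'(x)$ from a general claim that torsion classes in $J(\Q)$ have integral Mumford coordinates, proved by running your formal-group argument at $\infty$ on each point of the support. That does not work. The classes $[P_i-\infty]$ of the support points need not be torsion; only their sum is. So the argument at $\infty$ says nothing about them, and nothing prevents a support point of a torsion class from reducing to $\infty$ at some prime.

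More decisively, for $2P-2\infty$ the only support point is $P$, which you already know is integral. So the analysis near $\infty$ gives no information about $v$ at all. Non-integrality of $\lambda=f'(x)/(2y)$ comes from a different degeneration. A root-valuation computation shows that if $\ord_p f'(x)<\ord_p y$, then there is a unique root $\alpha\in\Q_p$ of $f$ with $\ord_p(x-\alpha)>\ord_p y$. Thus $P$ is $p$-adically close to the Weierstrass point $W=(\alpha,0)$, and $2[P-\infty]=2[P-W]$ tends to $0$ in $J$ through divisors of the form $Q+\iota Q$, not through points near $\infty$.

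The missing step is to show that in this situation $2[P-\infty]$ is not torsion. This must hold at every prime, including $p=2$ and primes of bad reduction, where $\ord_p f'(\alpha)$ can be positive and $W$ need not reduce to a smooth point. One way is to compute the logarithm $\int_W^P x^{i-1}\,dx/(2y)$ on a disc around $W$ parametrised by $y$, and check that its leading term $y\,\alpha^{i-1}/f'(\alpha)$ dominates. Another is to use a formal group for $J$ over $\ZZ_p$ whose coordinates see this neighbourhood of $0$.

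\textbf{Smaller issues in the integrality half.} The torsion lemma is misstated: the correct form is that $\hat J(p^k\ZZ_p)$ is torsion-free once $k>1/(p-1)$. At $p=2$ you only assert that parity ``should'' work, and care is needed. An abstract formal group over $\ZZ_2$ with an involution of linear part $-1$ can have $2$-torsion in its first layer; the point $-2$ of $\hat{\mathbb{G}}_m$ is an example. So the N\'eron-model formal group alone does not suffice.

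What does work is the oddness of the explicit expansions. With $t=x^g/y$, each $x^{i-1}\,dx/(2y)$ is an even power series in $t$ with coefficients in $\ZZ_p$, times $dt$. Hence $\int_\infty^P x^{g-1}\,dx/(2y)=-t+\sum_{n\ge 3,\ n\text{ odd}}c_n t^n/n$ with $c_n\in\ZZ_p$. For $\ord_p(t)\ge 1$ the linear term dominates for every $p$, including $p=2$. Therefore $[P-\infty]$ has nonzero logarithm and is not torsion.
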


Let $C\colon y^2 = f(x)$ be a (hyper)elliptic curve over $\Q$ of genus $g \ge 1$, with $\deg f = 2g+1$, and let $\infty \in C(\Q)$ be the unique point at infinity. Let $\phi$ be the Abel--Jacobi map with basepoint $\infty$ as defined in Theorem~\ref{thm:grant}. We call a point $P \in C(\Q) \setminus \{\infty\}$ a \textit{rational torsion point} if $\phi(P) \in J(\Q)_{\mathrm{tors}}$. 

\subsection{From Diophantine equations to hyperelliptic curves}
For a fixed integer $k$, let $C_{k,a,b,c,m,n}$ be the affine plane
curve over $\Q$ defined by
\begin{align}\label{cccc}
  C_{k,a,b,c,m,n}:\qquad cm^2y^2 - kxy = ax^d - cn^2k^2 - b.
\end{align}
Since~\eqref{cccc} is precisely~\eqref{eq:main_eq} with $z=k$ fixed, a point $(x,y)\in\Q^2$ lies on $C_{k,a,b,c,m,n}$ if and only if $(x,y,k)$ is a solution of~\eqref{eq:main_eq}.

Assuming $d\ge3$, we now define the corresponding families of (hyper)elliptic curves based on the parity of $k$.

\noindent{(a)} \,\textit{$k$ is even.}\, 
 Multiplying~\eqref{cccc} by $cm^2$, and completing the square, we get    
 \begin{align*}
    \left(cm^2y - \frac{kx}{2}\right)^2 - \frac{k^2x^2}{4} &= cm^2ax^d - cm^2(cn^2k^2 + b).
\end{align*}
Multiplying by $(cm^2a)^{d-1}$, and using the fact that $d$ is an odd integer, we have
\begin{equation*}
    \left(\Big(cm^2y - \frac{kx}{2}\Big)(cm^2a)^{\frac{d-1}{2}}\right)^2 - \frac{(kx)^2(cm^2a)^{d-1}}{4} = (cm^2ax)^d - cm^2(cn^2k^2 + b)(cm^2a)^{d-1}.
\end{equation*}
Substituting $X = cm^2ax$ and $Y = \left(cm^2y - \frac{kx}{2}\right)(cm^2a)^{\frac{d-1}{2}}$, we obtain the following family of hyperelliptic (or elliptic when $d=3$) curves
\begin{align}\label{Hyperelliptic even}
    H_{k,a,b,c,m,n}^e : Y^2 - (cm^2a)^{d-3}\frac{k^2X^2}{4} = X^d - cm^2(cn^2k^2 + b)(cm^2a)^{d-1}.
\end{align}
    
\noindent{(b)} \,\textit{$k$ is odd.}\, 
Multiplying~\eqref{cccc} by $4cm^2$, and completing the square, we get
\begin{align*}
    (2cm^2y - kx)^2 - k^2x^2 &= 4cm^2ax^d - 4cm^2(cn^2k^2 + b).
\end{align*}
Multiplying this by $(4cm^2a)^{d-1}$ yields
\begin{align*}
    &\left((2cm^2y - kx)(4cm^2a)^{\frac{d-1}{2}}\right)^2 - (4cm^2a)^{d-1}k^2x^2 \\
    =& (4cm^2ax)^d - 4cm^2(cn^2k^2 + b)(4cm^2a)^{d-1}.
\end{align*}
 Finally, substituting $X = 4cm^2ax$ and $Y = (2cm^2y - kx)(4cm^2a)^{\frac{d-1}{2}}$, we obtain the following family of hyperelliptic (or elliptic when $d=3$) curves
\begin{align}\label{Hyperelliptic odd}
    H_{k,a,b,c,m,n}^o : Y^2 - (4cm^2a)^{d-3}k^2X^2 = X^d - 4cm^2(cn^2k^2 + b)(4cm^2a)^{d-1}.
\end{align}

Consider the family of (hyper)elliptic curves defined by
\begin{align}\label{Hyperelliptic}
H_{k,a,b,c,m,n} = 
\begin{cases} 
H_{k,a,b,c,m,n}^e & \text{if } k \text{ is even,} \\ 
H_{k,a,b,c,m,n}^o & \text{if } k \text{ is odd.} 
\end{cases}
\end{align}
We now analyse the rational torsion points of $H_{k,a,b,c,m,n}$.
\begin{lemma} \label{lem:inverse-mapping}
Let $a, c, d, m, n$ and $b$ satisfy the hypotheses of Theorem~\ref{thm:main3_gen}. A rational point $(X, Y) \in H_{k,a,b,c,m,n}(\Q)\setminus\{\infty\}$ bijectively corresponds to a solution $(x, y, k) \in \Q^3$ of~\eqref{eq:main_eq}. Furthermore, if $(X, Y) \in \ZZ^2$ is an integer point on the (hyper)elliptic curve, it yields an integer solution $(x, y, k)$ of~\eqref{eq:main_eq} if and only if the following divisibility conditions hold:
\begin{enumerate}[label=(\alph*)]
    \item for $k$ even, $\frac{X}{cm^2a}\in \ZZ$ and $\frac{Y}{(cm^2)^{\frac{d+1}{2}}a^{\frac{d-1}{2}}} + \frac{kX}{2c^2m^4a}\in \ZZ$;\label{Ccond:a}
    \item for $k$ odd, $\frac{X}{4cm^2a}\in \ZZ$ and $\frac{Y}{2^d(cm^2)^{\frac{d+1}{2}}a^{\frac{d-1}{2}}} + \frac{kX}{8c^2m^4a}\in \ZZ$.\label{Ccond:b}
\end{enumerate}
\end{lemma}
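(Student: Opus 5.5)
The plan is to show that the substitutions used to build $H^e_{k,a,b,c,m,n}$ and $H^o_{k,a,b,c,m,n}$ are invertible affine changes of variables over $\Q$. Then I will read off the integrality conditions by solving for $x$ and $y$ explicitly in terms of $X$ and $Y$.

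Under the hypotheses of Theorem~\ref{thm:main3_gen} we have $a \equiv c \equiv 1 \pmod{12}$, so $a, c \neq 0$, and $m$ is odd, so $m \neq 0$. Hence $cm^2a \neq 0$, $4cm^2a \neq 0$, and the maps below are well defined.

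For $k$ even, the map $(x,y) \mapsto (X,Y)$ is given by $X = cm^2ax$ and $Y = \bigl(cm^2y - \tfrac{kx}{2}\bigr)(cm^2a)^{\frac{d-1}{2}}$. It is triangular with nonzero diagonal coefficients, so its inverse is
\[
x = \frac{X}{cm^2a}, \qquad y = \frac{Y}{(cm^2)^{\frac{d+1}{2}}a^{\frac{d-1}{2}}} + \frac{kX}{2c^2m^4a}.
\]
The second formula comes from $cm^2 y = Y (cm^2a)^{-\frac{d-1}{2}} + \tfrac{k}{2}\cdot\tfrac{X}{cm^2a}$, followed by division by $cm^2$. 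The derivation preceding~\eqref{Hyperelliptic even} multiplies~\eqref{cccc} only by the nonzero constants $cm^2$ and $(cm^2a)^{d-1}$ and completes squares, and each step is an equivalence. Hence $(x,y)$ satisfies~\eqref{cccc} if and only if $(X,Y)$ satisfies~\eqref{Hyperelliptic even}. Since~\eqref{cccc} is~\eqref{eq:main_eq} with $z = k$, this gives the bijection between affine rational points of $H^e$ and rational solutions $(x,y,k)$.

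The case $k$ odd is identical with $X = 4cm^2ax$ and $Y = (2cm^2y - kx)(4cm^2a)^{\frac{d-1}{2}}$. The inverse is $x = X/(4cm^2a)$ and
\[
2cm^2 y = Y (4cm^2a)^{-\frac{d-1}{2}} + \frac{kX}{4cm^2a},
\]
so
\[
y = \frac{Y}{2\cdot 2^{d-1}(cm^2)^{\frac{d+1}{2}}a^{\frac{d-1}{2}}} + \frac{kX}{8c^2m^4a}.
\]
This matches the expressions in~\ref{Ccond:b}, since $2\cdot 4^{\frac{d-1}{2}} = 2^d$.

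For the integrality statement, the solution $(x,y,k)$ is an integer solution exactly when $x, y \in \ZZ$, because $k \in \ZZ$ is fixed. By the explicit inverse formulas, $x, y \in \ZZ$ is literally the pair of conditions in~\ref{Ccond:a} or~\ref{Ccond:b}. The only point needing care is bookkeeping: verify that the inverse exponents match, using $d$ odd so that $(d-1)/2$ is an integer, and note that no hidden division by zero occurs. I expect no genuine obstacle. The main risk is a sign or power-of-$2$ slip in the odd case, which I will check by substituting the inverse back into the defining substitution.
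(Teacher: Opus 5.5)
Your proposal is correct and follows essentially the same route as the paper: you invert the explicit triangular substitutions $(x,y)\mapsto(X,Y)$ in each parity case to get the bijection, then read off the integrality conditions from the inverse formulas (including the check $2\cdot 4^{(d-1)/2}=2^d$ in the odd case). Your remark that each step of the derivation is reversible (multiplying by nonzero constants and completing the square) is a small added justification that the paper leaves implicit when it simply asserts $\psi\circ\phi=\mathrm{id}$ and $\phi\circ\psi=\mathrm{id}$.
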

\begin{proof}
To establish the stated conditions, we examine the inverse transformations for both parities of $k$. Note that $H_{k,a,b,c,m,n}\cap \Q^2=H_{k,a,b,c,m,n}(\Q)\setminus\{\infty\}$.

\noindent{(a)} \,\textit{$k$ is even.}\,
Let $C_{k,a,b,c,m,n}$ and $H^{e}_{k,a,b,c,m,n}$ be defined as in~\eqref{cccc} and~\eqref{Hyperelliptic even} respectively. Define the maps  $ \phi^e \colon C_{k,a,b,c,m,n}\cap \Q^2 \rightarrow H^e_{k,a,b,c,m,n}\cap \Q^2$ and $  \psi^e \colon H^e_{k,a,b,c,m,n} \cap \Q^2\rightarrow C_{k,a,b,c,m,n}\cap \Q^2$ by
\begin{align}
  \phi^e(x,y)
    &= \left(\,cm^2ax,\ \left(cm^2y - \frac{kx}{2}\right)(cm^2a)^{\frac{d-1}{2}}\,\right),
      \label{even forward}
\end{align}
\begin{align}
  \psi^e(X,Y)
    &= \left(\,\frac{X}{cm^2a},\
        \frac{Y}{(cm^2)^{\frac{d+1}{2}}a^{\frac{d-1}{2}}}
        + \frac{kX}{2c^2m^4a}\,\right).
      \label{even invert}
\end{align}
Therefore, we have $\psi^e\circ\phi^e=\mathrm{id}$ and $\phi^e\circ\psi^e=\mathrm{id}$, giving the bijection. Moreover, writing $\psi^e(X,Y)=(x,y)$, we have $x=\frac{X}{cm^2a}$ and $y=\frac{Y}{(cm^2)^{(d+1)/2}a^{(d-1)/2}}+\frac{kX}{2c^2m^4a}$, so $(x,y)\in\ZZ^2$ if and only if~\ref{Ccond:a} holds.

\noindent{(b)} \,\textit{$k$ is odd.}\,
Let $H^{o}_{k,a,b,c,m,n}$ be defined as in~\eqref{Hyperelliptic odd}. Define the maps  $ \phi^{o} \colon C_{k,a,b,c,m,n}\cap \Q^2 \rightarrow H^{o}_{k,a,b,c,m,n}\cap \Q^2$ and $  \psi^{o} \colon H^{o}_{k,a,b,c,m,n} \cap \Q^2\rightarrow C_{k,a,b,c,m,n}\cap \Q^2$ by
\begin{align}
  \phi^{o}(x,y)
    &= \left(\,4cm^2ax,\ \left(2cm^2y -kx\right)(4cm^2a)^{\frac{d-1}{2}}\,\right),
      \label{odd forward}
\end{align}
\begin{align}
  \psi^{o}(X,Y)
    &= \left(   \frac{X}{4cm^2a},\frac{1}{2cm^2}\left(\frac{Y}{(4cm^2a)^{\frac{d-1}{2}}} +\frac{kX}{4cm^2a}\right)  \right).
      \label{odd invert}
\end{align} 
Therefore, we have $\psi^o\circ\phi^o=\mathrm{id}$ and $\phi^o\circ\psi^o=\mathrm{id}$, giving the bijection. Moreover, writing $\psi^o(X,Y)=(x,y)$, we have $x=\frac{X}{4cm^2a}$ and $y=\frac{Y}{2^d(cm^2)^{(d+1)/2}a^{(d-1)/2}}+\frac{kX}{8c^2m^4a}$, so $(x,y)\in\ZZ^2$ if and only if~\ref{Ccond:b} holds.
\end{proof}

\begin{lemma}\label{lem:squarefree-condition}
Let $a, c, m, n, b \in \ZZ$ and let $d \ge 3$ be an odd integer, as given in Theorem~\ref{thm:main3_gen}. Let $k \in \ZZ$ and let $f(X) = X^d + \lambda X^2 + \mu$, where
\[
(\lambda,\mu) =
\begin{cases}
    \bigl( \frac{k^2}{4}(cm^2a)^{d-3}, \, -cm^2(cn^2k^2+b)(cm^2a)^{d-1} \bigr), & \text{if } k \text{ is even}, \\[1ex]
    \bigl( k^2(4cm^2a)^{d-3}, \, -4cm^2(cn^2k^2+b)(4cm^2a)^{d-1} \bigr), & \text{if } k \text{ is odd}.
\end{cases}
\]
Then $f$ is squarefree if and only if $ cn^2k^2+b \neq 0$ and 
\begin{equation}\label{eq:squarefree-condition}
(d-2)^{d-2} k^{2d} \neq 4^{d-1} d^d (cm^2)^d a^2 (cn^2k^2+b)^{d-2}.
\end{equation}
\end{lemma}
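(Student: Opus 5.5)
The plan is to reduce squarefreeness to the non-vanishing of the discriminant and then compute that discriminant explicitly. By Remark~\ref{discriminant nonzero}, a monic $f$ is squarefree if and only if $\disc(f)\neq 0$. In the even case $\lambda=\frac{k^2}{4}(cm^2a)^{d-3}$ is an integer since $k$ is even, so $f\in\ZZ[X]$ in both cases (and the vanishing criterion holds over $\Q$ in any event). Since $d\ge 3$ is odd, Lemma~\ref{lem:trinomial_discriminant} gives
\[
\disc(f)=(-1)^{d(d-1)/2}\,\mu\,\bigl(d^d\mu^{d-2}+4(d-2)^{d-2}\lambda^d\bigr).
\]
Hence $f$ is squarefree if and only if $\mu\neq 0$ and $d^d\mu^{d-2}\neq -4(d-2)^{d-2}\lambda^d$.

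First I handle $\mu\neq0$. Under the hypotheses of Theorem~\ref{thm:main3_gen} we have $a\equiv c\equiv 1\pmod{12}$ and $m$ odd, so $a,c,m\neq 0$. Therefore $cm^2a\neq0$ and $4cm^2a\neq 0$, and in both cases $\mu=0$ if and only if $B:=cn^2k^2+b=0$. This gives the first condition in the statement.

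Assuming now $B\neq 0$, I treat the second factor case by case. Write $A=cm^2a$ in the even case and $A'=4cm^2a$ in the odd case. Since $d$ is odd, $d-2$ is odd, so $(-\mu)^{d-2}=-\mu^{d-2}$, and the minus signs on the two sides cancel.

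\emph{Even case.} The inequality becomes
\[
d^d(cm^2)^{d-2}B^{d-2}A^{(d-1)(d-2)}\neq 4^{1-d}(d-2)^{d-2}k^{2d}A^{d(d-3)}.
\]
The key bookkeeping is the exponent difference $(d-1)(d-2)-d(d-3)=2$. Dividing by the nonzero quantity $A^{d(d-3)}$ leaves $A^2=(cm^2)^2a^2$ on the left. Multiplying through by $4^{d-1}$ then yields exactly \eqref{eq:squarefree-condition}.

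\emph{Odd case.} The same cancellation gives
\[
d^d(4cm^2)^{d-2}B^{d-2}A'^2\neq 4(d-2)^{d-2}k^{2d}.
\]
Using $A'^2=16(cm^2)^2a^2$, the left side is $4^{d}d^d(cm^2)^da^2B^{d-2}$. Dividing by $4$ again gives \eqref{eq:squarefree-condition}.

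Each step is an equivalence, so combining the two factors proves the "if and only if". The only real obstacle is the exponent arithmetic, specifically tracking the powers of $A$ and of $4$ so that both parities collapse to the same inequality. The rest is a direct application of Lemma~\ref{lem:trinomial_discriminant}.
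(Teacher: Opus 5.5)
Your proposal is correct and takes essentially the same route as the paper. Both arguments reduce squarefreeness to $\disc(f)\neq 0$ via Lemma~\ref{lem:trinomial_discriminant}, observe that $\mu=0$ if and only if $cn^2k^2+b=0$ because $a,c,m\neq 0$, and then substitute $\lambda,\mu$ in each parity case to reach \eqref{eq:squarefree-condition}. Your explicit bookkeeping, $(d-1)(d-2)-d(d-3)=2$ together with the powers of $4$, is just the simplification the paper carries out implicitly.
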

\begin{proof}
By Lemma \ref{lem:trinomial_discriminant}, the discriminant is given by
\begin{equation*}
\disc(f)=(-1)^{(d-1)/2}\,\mu\,\bigl(4(d-2)^{d-2}\lambda^{d}+d^{d}\mu^{d-2}\bigr).
\end{equation*} 
Thus, $f$ is squarefree if and only if $\mu \neq 0$ and $4(d-2)^{d-2}\lambda^{d} \neq -d^{d}\mu^{d-2}$. 

Since $a,c,m\neq 0$ by hypothesis, $\mu \neq 0$ if and only if $ cn^2k^2+b \neq 0$ for both parities of $k$. We now establish the condition under which $4(d-2)^{d-2}\lambda^{d} \neq -d^{d}\mu^{d-2}$.

\medskip
\noindent\textbf{Case 1}: \textit{$k$ is even.}\quad
Substituting $\lambda$ and $\mu$ into $4(d-2)^{d-2}\lambda^{d} \neq -d^{d}\mu^{d-2}$ yields
\[ 4(d-2)^{d-2} \Bigl( \frac{k^2}{4}(cm^2a)^{d-3} \Bigr)^d \neq -d^d \Bigl( -cm^2(cn^2k^2+b)(cm^2a)^{d-1} \Bigr)^{d-2}. \]
Since $d$ is odd, the above is true if and only if
\[ (d-2)^{d-2} \frac{k^{2d}}{4^{d-1}} \neq d^d (cm^2)^{d-2} (cn^2k^2+b)^{d-2} (cm^2a)^2, \]
which holds if and only if~\eqref{eq:squarefree-condition} holds.

\medskip
\noindent\textbf{Case 2}: \textit{$k$ is odd.}\quad
Substituting $\lambda$ and $\mu$ into $4(d-2)^{d-2}\lambda^{d} \neq -d^{d}\mu^{d-2}$ yields
\[ 4(d-2)^{d-2} \bigl( k^2(4cm^2a)^{d-3} \bigr)^d \neq -d^d \bigl( -4cm^2(cn^2k^2+b)(4cm^2a)^{d-1} \bigr)^{d-2}. \]
Since $d$ is odd, the above is true if and only if
\[ 4(d-2)^{d-2} k^{2d} \neq d^d (4cm^2)^{d-2} (cn^2k^2+b)^{d-2} (4cm^2a)^2, \]
which holds if and only if~\eqref{eq:squarefree-condition} holds. 
\end{proof}
\begin{lemma}\label{lem:squarefree}
Let $a, c, m, n, b \in \ZZ$ and let $d \ge 3$ be an odd integer, as given in Theorem~\ref{thm:main3_gen}. For a fixed $k \in \ZZ$, consider the curve $H_{k,a,b,c,m,n}$ of~\eqref{Hyperelliptic}, defined by $Y^2 = f(X)$, where $f $ is the polynomial given as in the statement of Lemma~\ref{lem:squarefree-condition}. Then $f$ is squarefree.
\end{lemma}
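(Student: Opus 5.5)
The plan is to apply Lemma~\ref{lem:squarefree-condition}: it reduces squarefreeness of $f$ to two conditions, $cn^2k^2+b\neq 0$ and~\eqref{eq:squarefree-condition}. I will check each one using the hypotheses of Theorem~\ref{thm:main3_gen}, namely $a\equiv c\equiv 1 \pmod{12}$, $m,n,d,s$ odd, and $b=a(2cmn)^d-3c^st^{2q}$.

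\emph{First condition.} I will deduce $cn^2k^2+b\neq 0$ from the insolvability result itself. Substitute $(x,y,z)=(0,0,k)$ into~\eqref{eq:main_eq}. The left-hand side becomes
\[
a\cdot 0 - c(0+n^2k^2)+0-b = -(cn^2k^2+b).
\]
So if $cn^2k^2+b=0$, then $(0,0,k)$ would be an integer solution of~\eqref{eq:main_eq}, contradicting Theorem~\ref{thm:main3_gen}. Alternatively, one can argue by parity: the proof of Theorem~\ref{thm:main_general_l} shows $b\equiv 1\pmod 4$, and $cn^2k^2$ is either even or $\equiv 1\pmod 4$, so $cn^2k^2+b$ is odd or $\equiv 2 \pmod 4$, hence nonzero.

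\emph{Second condition.} I will compare $2$-adic valuations of the two sides of~\eqref{eq:squarefree-condition}. The right-hand side is
\[
4^{d-1}d^d(cm^2)^d a^2(cn^2k^2+b)^{d-2}.
\]
It is nonzero, because $a,c,m,d\neq 0$ and $cn^2k^2+b\neq 0$ by the first step. Its $2$-adic valuation is at least $2(d-1)\ge 4$, with equality whenever $cn^2k^2+b$ is odd. The left-hand side is $(d-2)^{d-2}k^{2d}$, and $d-2$ is odd.
\begin{itemize}
\item If $k$ is odd, the left-hand side is odd while the right-hand side is even, so they differ.
\item If $k=0$, the left-hand side is $0$ while the right-hand side is nonzero, so they differ.
\item If $k\neq 0$ is even, then $cn^2k^2\equiv 0\pmod 4$ and $b\equiv 1 \pmod 4$, so $cn^2k^2+b$ is odd and the right-hand side has valuation exactly $2(d-1)$. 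The left-hand side has valuation $2d\,v_2(k)\ge 2d>2(d-1)$, so the two sides differ.
\end{itemize}
In every case~\eqref{eq:squarefree-condition} holds, and Lemma~\ref{lem:squarefree-condition} gives that $f$ is squarefree.

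I do not expect a real obstacle here. The only point needing care is recording $b\equiv 1\pmod 4$ from~\eqref{eq:g3c-prelim}. That congruence is what makes $cn^2k^2+b$ odd when $k$ is even, and so pins down the $2$-adic valuation of the right-hand side exactly.
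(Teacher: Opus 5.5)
Your proof is correct and is essentially the paper's argument. You get $cn^2k^2+b\neq 0$ from $b\equiv 1\pmod 4$, and you refute equality in \eqref{eq:squarefree-condition} by comparing $2$-adic valuations in the cases $k=0$, $k$ odd and $k\neq 0$ even, exactly as the paper does; the paper just does the two checks in the opposite order, and your extra derivation of $cn^2k^2+b\neq 0$ via the would-be solution $(0,0,k)$ of \eqref{eq:main_eq} and Theorem~\ref{thm:main3_gen} is also valid.
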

\begin{proof}
In both parities for $k$ (odd or even), we have $f(X)=X^d+\lambda X^2+\mu$, where $(\lambda,\mu)$ is defined as in Lemma \ref{lem:squarefree-condition}. Observe that $\lambda \in \ZZ$ when $k$ is even. Consequently, $ f \in\ZZ[X]$ in both cases. For an odd integer $d\ge 3$, Lemma \ref{lem:trinomial_discriminant} yields
\begin{equation}
\disc(f)=(-1)^{(d-1)/2}\,\mu\,\bigl(4(d-2)^{d-2}\lambda^{d}+d^{d}\mu^{d-2}\bigr).
\end{equation}
By Theorem~\ref{thm:main3_gen}, $a$, $m$, $n$ are odd and $c\equiv1\pmod{4}$. In particular, $a$, $c$, $m$, and $n$ are nonzero. To show that $f$ is squarefree, we first prove that~\eqref{eq:squarefree-condition} holds for every choice of parameters satisfying the hypothesis. Assume for the sake of contradiction that
\begin{equation}\label{disc equality}
(d-2)^{d-2} k^{2d} = 4^{d-1} d^d (cm^2)^d a^2 (cn^2k^2+b)^{d-2}.
\end{equation}

Since $d \ge 3$, $c \equiv 1 \pmod 4$, and $t$ is odd (by Lemma~\ref{lem:t_odd}), it follows that $2^d \equiv 0 \pmod 4$ and $c^s \equiv t^{2q} \equiv 1 \pmod 4$, which yields $ b= a(2cmn)^d - 3c^s t^{2q} \equiv1\pmod{4}$. Therefore, $b$ is odd and $b\neq 0$.

If $k=0$, then~\eqref{disc equality} reduces to $0 = 4^{d-1} d^d (cm^2)^d a^2\, b^{d-2}$. Since $a, c, m,$ and $b$ are nonzero, and $d \ge 3$, we have $4^{d-1} d^d (cm^2)^d a^2\, b^{d-2}\neq 0$, which is a contradiction. Therefore, we assume $k\neq0$, so that the $2$-adic valuation of $k$, $v_2(k)$, is well-defined.

If $k$ is odd, then $(d-2)^{d-2}k^{2d}$ is odd. This means that $v_2((d-2)^{d-2}k^{2d})=0$, whereas the right-hand side of \eqref{disc equality} has $v_2(4^{d-1} d^d (cm^2)^d a^2 (cn^2k^2+b)^{d-2})\ge 2(d-1)\ge 4$, a contradiction.

If $k(\neq 0)$ is even, then $4\mid cn^2k^2$. We consider $2$-adic valuation for both sides of \eqref{disc equality}. Since $b$ is odd, $cn^2k^2+b$ is odd. Because $a$, $c$, and $m$ are also odd, and $v_2(k)\ge 1$, we have
\[
v_2(\mathrm{LHS})=2d\,v_2(k)\ge 2d >2(d-1)=v_2(\mathrm{RHS}),
\]
a contradiction. Therefore, in either case~\eqref{disc equality} does not hold, which ensures that~\eqref{eq:squarefree-condition} holds.

Since $n^2\equiv b\equiv c\equiv 1 \pmod{4}$ by hypotheses, we have $cn^2k^2 + b \equiv k^2+1 \not \equiv 0 \pmod 4$, yielding $cn^2k^2 + b \neq 0$. Therefore, by Lemma \ref{lem:squarefree-condition}, $f$ is squarefree.
\end{proof}
\begin{theorem} \label{thm:general-3c-hyperelliptic}
Let the parameters $a, c, d, m, n, s, q, t$, and $b$ satisfy the hypotheses of Theorem~\ref{thm:main3_gen}. For any fixed integer $k \in \ZZ$, consider the corresponding family of (hyper)elliptic curves $H_{k,a,b,c,m,n}$ over $\Q$ as defined in~\eqref{Hyperelliptic}. Then $H_{k,a,b,c,m,n}$ possesses no rational torsion point $(X,Y)$ satisfying
\begin{enumerate}[label=(\alph*)]
    \item $\frac{X}{cm^2a}\in \ZZ$ and $\frac{Y}{(cm^2)^{\frac{d+1}{2}}a^{\frac{d-1}{2}}} + \frac{kX}{2c^2m^4a}\in \ZZ$, when $k$ is even;\label{Cond: 1}
    \item $\frac{X}{4cm^2a}\in \ZZ$ and $\frac{Y}{2^d(cm^2)^{\frac{d+1}{2}}a^{\frac{d-1}{2}}} + \frac{kX}{8c^2m^4a}\in \ZZ$, when $k$ is odd.\label{Cond: 2}
\end{enumerate}
In particular, for an even integer $k$, we have the following.
\begin{enumerate}[label=(\roman*)]
    \item The family $H_{k,1,b,1,1,n}$ possesses no rational torsion points; that is, no $\widetilde{P}\in H_{k,1,b,1,1,n}(\Q)\setminus\{\infty\}$ satisfies $\phi(\widetilde{P})\in J(\Q)_{\mathrm{tors}}$.\label{HyperCond: 1}
    \item Consequently, $\phi\big(H_{k,1,b,1,1,n}(\Q)\big) \cap J(\Q)_{\mathrm{tors}}=\{[0]\}$.\label{HyperCond: 2}
    \item Furthermore, when $d=3$, $H_{k,1,b,1,1,n}$ has torsion-free Mordell--Weil group over $\Q$.\label{HyperCond: 3}
\end{enumerate}
\end{theorem}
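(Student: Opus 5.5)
The plan is a contradiction argument built from Grant's theorem, Lemma~\ref{lem:inverse-mapping}, Lemma~\ref{lem:squarefree} and Theorem~\ref{thm:main3_gen}.

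\textbf{Main statement.} Fix $k$ and suppose $(X,Y)\in H_{k,a,b,c,m,n}(\Q)\setminus\{\infty\}$ is a rational torsion point satisfying \ref{Cond: 1} (if $k$ is even) or \ref{Cond: 2} (if $k$ is odd).

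First I will check that Theorem~\ref{thm:grant} applies. The curve is $Y^2=f(X)$ with $f(X)=X^d+\lambda X^2+\mu$, which is monic of odd degree $d=2g+1\ge3$. For $k$ even, $\lambda=\tfrac{k^2}{4}(cm^2a)^{d-3}\in\ZZ$; for $k$ odd, $\lambda=k^2(4cm^2a)^{d-3}\in\ZZ$. By Lemma~\ref{lem:squarefree}, $f$ is squarefree. Grant's theorem then gives $X,Y\in\ZZ$.

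Next, since $(X,Y)$ is an integer point satisfying the stated divisibility conditions, Lemma~\ref{lem:inverse-mapping} gives an integer solution $(x,y,k)$ of~\eqref{eq:main_eq}. Theorem~\ref{thm:main3_gen} says no such solution exists, so we have a contradiction. This step is routine: the conditions in \ref{Cond: 1} and \ref{Cond: 2} were chosen to match Lemma~\ref{lem:inverse-mapping} exactly.

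\textbf{Specialisation $a=c=m=1$, $k$ even.} Here condition \ref{Cond: 1} reads $X\in\ZZ$ and $Y+\tfrac{kX}{2}\in\ZZ$. Since $k$ is even, this holds automatically for every integer point.

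\ref{HyperCond: 1}: By Grant's theorem, any rational torsion point is an integer point, so it satisfies \ref{Cond: 1}. The main statement then forbids it, so there are no rational torsion points.

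\ref{HyperCond: 2}: We have $\phi(\infty)=[\infty-\infty]=[0]$. Every other point of $H_{k,1,b,1,1,n}(\Q)$ maps outside $J(\Q)_{\mathrm{tors}}$ by \ref{HyperCond: 1}, so the intersection is exactly $\{[0]\}$.

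\ref{HyperCond: 3}: When $d=3$ the curve is elliptic, so $\phi$ is a group isomorphism $E(\Q)\to J(\Q)$. Every nontrivial torsion point of $E(\Q)$ would therefore be a rational torsion point, which \ref{HyperCond: 1} excludes. Hence $E(\Q)_{\mathrm{tors}}$ is trivial and the Mordell--Weil group is torsion-free.

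\textbf{Expected obstacle.} The main point needing care is the integrality of $f$ together with the correct use of Lemma~\ref{lem:inverse-mapping}. In particular, I must check that for $k$ even the integrality of $Y+\tfrac{kX}{2}$ really is automatic, which requires $k/2\in\ZZ$. I also must confirm that the odd-$k$ case needs the explicit divisibility hypotheses, which is why it is excluded from the specialisation. For \ref{HyperCond: 3}, the only subtle point is that for $d=3$ the Abel--Jacobi map is an isomorphism, which is standard.
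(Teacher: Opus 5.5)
Your proposal is correct and follows the paper's proof essentially step for step. The steps match in order: squarefreeness of $f$ from Lemma~\ref{lem:squarefree}, integrality of the torsion point via Grant's theorem, pull-back through Lemma~\ref{lem:inverse-mapping} to an integer solution that Theorem~\ref{thm:main3_gen} forbids, automatic integrality of $Y+\tfrac{kX}{2}$ when $k$ is even, and the Abel--Jacobi isomorphism when $d=3$. (A small aside: for the main statement, the divisibility conditions alone already make the inverse image under $\psi^e$ or $\psi^o$ integral, so Grant's theorem is only genuinely needed for the specialisation in part~(i).)
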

\begin{proof}
The (hyper)elliptic curve $H_{k,a,b,c,m,n}$ is given by $Y^2 = f(X)$, where $f$ is the
polynomial defined in Lemma~\ref{lem:squarefree-condition}; by Lemma~\ref{lem:squarefree},
$f$ is squarefree.

Assume, for contradiction, that $H_{k,a,b,c,m,n}$ possesses a rational torsion point
$P=(X,Y)$ (i.e., $P \in H_{k,a,b,c,m,n}(\Q)\setminus\{\infty\}$ with
$\phi(P) \in J(\Q)_{\mathrm{tors}}$) satisfying the divisibility conditions of
either~\ref{Cond: 1} or~\ref{Cond: 2}. By Theorem~\ref{thm:grant}, $(X,Y)\in\ZZ^2$.
These conditions are precisely~\ref{Ccond:a} or~\ref{Ccond:b} of
Lemma~\ref{lem:inverse-mapping}, so the integer point $(X,Y)$ yields an integer solution
of~\eqref{eq:main_eq}, contradicting Theorem~\ref{thm:main3_gen}. Hence $H_{k,a,b,c,m,n}$
possesses no rational torsion point satisfying the respective conditions.

Suppose $H_{k,1,b,1,1,n}$ has a rational torsion
point $(X_{0},Y_{0})$. By Theorem~\ref{thm:grant}, $(X_{0},Y_{0}) \in \ZZ^{2}$. Since $k$ is even, $\frac{kX_{0}}{2}\in\ZZ$, so $\bigl(X_{0},\,Y_{0}+\frac{kX_{0}}{2}\bigr)\in\ZZ^2$. Therefore, $(X_0,Y_0)$ satisfies the divisibility conditions of~\ref{Cond: 1} with $a=c=m=1$. This contradicts case~\ref{Cond: 1} (with $a=c=m=1$), proved above, which ensures that $H_{k,1,b,1,1,n}$ has no rational torsion point $(X,Y)$ with $\bigl(X,\,Y+\frac{kX}{2}\bigr)\in\ZZ^2$. Hence $H_{k,1,b,1,1,n}$ has no rational torsion point when $k$ is even. This proves~\ref{HyperCond: 1}.

Let $[D] \in \phi\big(H_{k,1,b,1,1,n}(\Q)\big) \cap J(\Q)_{\mathrm{tors}}$, where $D \in \operatorname{Div}^0\!\big(H_{k,1,b,1,1,n}\big)$. Then $[D]=\phi(P)$ for some $P \in H_{k,1,b,1,1,n}(\Q)$. If $P\neq\infty$, then $P\in H_{k,1,b,1,1,n}(\Q)\setminus\{\infty\}$ and $\phi(P)=[D]\in J(\Q)_{\mathrm{tors}}$, contradicting~\ref{HyperCond: 1}. Therefore, $P=\infty$, whence $[D]=\phi(\infty)=[\infty-\infty]=[0]$. Consequently,  $\phi\big(H_{k,1,b,1,1,n}(\Q)\big) \cap J(\Q)_{\mathrm{tors}}=\{[0]\}$. This proves~\ref{HyperCond: 2}.

For $d=3$, the curve $ H_{k,1,b,1,1,n}$ is elliptic, so $H_{k,1,b,1,1,n}(\Q)$ is an abelian group (with identity $\infty$), and the Abel--Jacobi map $\phi\colon E(\Q)\to J(\Q)$, $P\mapsto[P-\infty]$, is a group isomorphism (see \cite[p.~330]{Silvermanarithmetic}). Hence, any nontrivial $P\in H_{k,1,b,1,1,n}(\Q)_{\mathrm{tors}}$ satisfies $\phi(P)\in J(\Q)_{\mathrm{tors}}$. This means that $P$ is a rational torsion point, contradicting~\ref{HyperCond: 1}. Therefore, $H_{k,1,b,1,1,n}(\Q)_{\mathrm{tors}}$ is trivial. This proves~\ref{HyperCond: 3}.
\end{proof}

\begin{remark}\label{rem:pc-correction}
Certain claims in \cite{VaishyaSharma} and \cite{PrakashChakraborty} are not fully justified by the arguments provided there. Throughout, our $k$ corresponds to $m$, and our $a$ to $A$, in \cite{VaishyaSharma}.

\textit{First}, in mapping an integer point $(X_0,Y_0)$ on the associated (hyper)elliptic curve back to an integer solution of the Diophantine equation, the authors used the substitution $x = aX_0$ for $k$ even (resp., $x = 4aX_0$ for $k$ odd); see the proof of \cite[Theorem~1.2]{VaishyaSharma} (where $(X_{0},Y_{0})$ is mapped to $(aX_{0},\,aY_{0}-\tfrac{akX_{0}}{2},\,k)$ for even $k$) and \cite[Theorem~3.3]{PrakashChakraborty} (where $(X_{0},Y_{0})$ is sent to $(aX_{0},\,(Y_{0}-\tfrac{kX_{0}}{2})a^{\frac{d-1}{2}},\,k)$ for even $k$). The correct inverse map is instead $x = \tfrac{X_0}{a}$ for $k$ even (resp., $x = \tfrac{X_0}{4a}$ for $k$ odd), which is an integer only when $a \mid X_0$ (resp., $4a \mid X_0$). For a torsion point of the curve to yield an \textit{integer} solution of the Diophantine equation, this divisibility must hold. However, it fails for general integer points on the curve. Consequently, the reduction to the Diophantine equation does not transform torsion points to integer solutions when $a \neq 1$, so the argument is incomplete for every odd $k$ and for even $k$ with $a \neq 1$. It works unconditionally when $a = 1$ and $k$ is even, since the inverse map is then integral; this is the case that we establish in Theorem~\ref{thm:general-3c-hyperelliptic}.

\textit{Second}, for the hyperelliptic curves of \cite{PrakashChakraborty} (those with $\deg f \ge 5$, i.e., genus $g \ge 2$), a further gap remains even if the inverse map is corrected, because the Abel--Jacobi map need not be surjective for $g \ge 2$. In particular, there is no guarantee that every class in $J(\Q)_{\mathrm{tors}}$ is of the form $[P - \infty]$ for some $P \in C(\Q)$, the only classes to which Grant's theorem applies. Consequently, the claim that such a hyperelliptic curve has torsion-free Mordell--Weil group cannot be obtained from Grant's theorem alone. Indeed, a nontrivial rational torsion class can be of the form $[P_1 + P_2 - 2\infty]$, where $\{P_1, P_2\}$ is a Galois-conjugate pair of non-rational points; such a class is not in general of the form $[P - \infty]$, so Grant's theorem does not apply to it. Hence, the argument for a torsion-free Mordell--Weil group does not extend to $g \ge 2$. By contrast, in the elliptic case ($\deg f = 3$), the Abel--Jacobi map is an isomorphism onto $J$. Therefore, every torsion class is represented by a rational torsion point and the argument goes through, although this further requires that the above divisibility constraint is satisfied. We carry out this analysis for $a = 1$ in Theorem~\ref{thm:general-3c-hyperelliptic}. In doing so, we demonstrate that the proof of \cite[Theorem~1.2 (1)]{VaishyaSharma}, as originally presented, requires the additional assumption that $a=1$, and the method presented there is insufficient to prove \cite[Theorem~1.2 (2)]{VaishyaSharma}. This discussion suggests that \cite[Corollary~1.4]{VaishyaSharma} also requires a valid proof.

\textit{Third}, to apply Grant's theorem to a (hyper)elliptic curve $Y^2 = f(X)$, we require $f$ to be squarefree, which we establish by showing that the discriminant of $f$ is nonzero (see Lemma~\ref{lem:squarefree}). We remark that the squarefreeness of $f$ was not explicitly mentioned in \cite{VaishyaSharma} (resp., \cite{PrakashChakraborty}) before applying the Nagell--Lutz (resp., Grant) theorem.

Although these gaps invalidate the proofs of \cite[Theorem~1.2]{VaishyaSharma} and \cite[Theorem~3.3]{PrakashChakraborty}, they do not necessarily make the underlying statements false. Indeed, it is possible that the statements may still be true and admit correct independent proofs. In particular, the computational evidence detailed in the following remark suggests that the statement of \cite[Theorem~1.2]{VaishyaSharma} may still hold.
\end{remark}

\begin{remark}\label{rem:our-scope-and-counterexample}
The divisibility obstruction of Remark~\ref{rem:pc-correction} also constrains our construction in the same way, and in fact bounds it sharply. For $d = 3$, $a = c = m = 1$, substituting $b = (2n)^3 - 3t^{2q}$ gives the explicit elliptic family
\[
  H_{k,1,b,1,1,n}\colon\ Y^2 = X^3 + \tfrac{k^2}{4}\,X^2 - \bigl(n^2 k^2 + b\bigr),\qquad k \text{ even},
\]
which is an integral Weierstrass model. For $a = 1$ and even $k$, the torsion-free conclusion is unconditional, that is $H_{k,1,b,1,1,n}(\Q)_{\mathrm{tors}} = \{\mathcal{O}\}$ (see Theorem~\ref{thm:general-3c-hyperelliptic}). For $a \neq 1$ it does not follow from the Diophantine correspondence; in fact, it fails to hold. For $a = 145$, $d = 3$, $c = m = n = s = q = 1$, $t = -11$, $k = -16$, we have $b = 145\cdot 8 - 3\cdot 121 = 797$. This choice of parameters satisfies the required hypotheses and we have the following elliptic curve
\[
  H_{-16,\,145,\,797,\,1,\,1,\,1}\colon\quad Y^2 = X^3 + 64\,X^2 - 22139325,
\]
whose Mordell--Weil group has nontrivial rational torsion: $H_{-16,145,797,1,1,1}(\Q)_{\mathrm{tors}} \cong \ZZ/2\ZZ$, generated by the $2$-torsion point $(261, 0)$. Thus $a = 1$ in Theorem~\ref{thm:general-3c-hyperelliptic} cannot be relaxed in general. This counterexample arises only in our setting with broader parametrisation; it is not a member of \cite{VaishyaSharma}, whose curves have constant term $-a^2(k^2 + b)$ with $b = 8a - 3$, where we have the specialisation $c = m = n = t = q = s = 1$, and $d=3$. Therefore, the family of elliptic curves is given by the Weierstrass equation $Y^2 = X^3 + \frac{1}{4} k^2 X^2 - a^2(k^2 + 8a - 3)$. A direct computation in \textsc{SageMath}~\cite{sagemath} found no nontrivial rational torsion among all admissible curves with $|a| \le 500$, even $|k| \le 60$ (yielding $2573$ distinct curves). This computational evidence is consistent with the claim in \cite[Theorem~1.2 (1)]{VaishyaSharma}.
\end{remark}

\begin{remark}\label{rem:hyperelliptic_counterpart}
We note that Theorem~\ref{thm:coprime_3} also admits a hyperelliptic counterpart, analogous to Theorem~\ref{thm:general-3c-hyperelliptic}, for the family of hyperelliptic curves $H_{k,a,b,c,m,n}$ over $\Q$. It can be shown that $H_{k,a,b,c,m,n}$ possesses no rational torsion points $(X,Y)$ satisfying the divisibility conditions specified in Lemma~\ref{lem:inverse-mapping}, alongside certain additional coprimality constraints arising from the insolvability conditions of Theorem~\ref{thm:coprime_3}.
\end{remark}

\bibliographystyle{plain}
\bibliography{references}
\end{document}